\keywords{Fibrations, Dualizing sheaf, Massey products, Fujita decompositions, Semistable fibrations, Local systems} 
\subjclass[2010]{14D06, 14C30, 14J40, 32G20}
\theoremstyle{plain}
\newtheorem{thm}{Theorem}[section]
\newtheorem{thml}{Theorem}
\newtheorem{prop}[thm]{Proposition}
\newtheorem{cor}[thm]{Corollary}
\newtheorem{lem}[thm]{Lemma}
\theoremstyle{definition}
\newtheorem{defn}[thm]{Definition}
\newtheorem{rmk}[thm]{Remark}
\newcommand{\sA}{\mathcal{A}}
\newcommand{\sE}{\mathcal{E}}
\newcommand{\sF}{\mathcal{F}}
\newcommand{\sG}{\mathcal{G}}
\newcommand{\sH}{\mathcal{H}}
\newcommand{\sK}{\mathcal{K}}
\newcommand{\sO}{\mathcal{O}}
\newcommand{\sU}{\mathcal{U}}
\newcommand{\sW}{\mathcal{W}}
\newcommand{\mC}{\mathbb{C}}
\newcommand{\mD}{\mathbb{D}}
\newcommand{\mH}{\mathbb{H}}
\newcommand{\mL}{\mathbb{L}}
\newcommand{\mP}{\mathbb{P}}
\newcommand{\mZ}{\mathbb{Z}}
\newcommand{\mW}{\mathbb{W}}
\newcommand{\Ima}{\mathrm{Im}\,}
\newcommand{\rank}{\mathrm{rank}\,}
\numberwithin{equation}{section}
\newcommand{\beba}  {\begin{equation}\begin{array}{rcl}}
\newcommand{\eaee}  {\end{array}\end{equation}}
\def\l@section{\@tocline{1}{0pt}{1pc}{}{}}
\def\l@subsection{\@tocline{2}{0pt}{1pc}{4.6em}{}}
\def\l@subsubsection{\@tocline{3}{0pt}{1pc}{7.6em}{}}
\renewcommand{\tocsection}[3]{%
  \indentlabel{\@ifnotempty{#2}{\makebox[2.3em][l]{%
    \ignorespaces#1 #2.\hfill}}}#3}
\renewcommand{\tocsubsection}[3]{%
  \indentlabel{\@ifnotempty{#2}{\hspace*{2.3em}\makebox[2.3em][l]{%
    \ignorespaces#1 #2.\hfill}}}#3}
\renewcommand{\tocsubsubsection}[3]{%
  \indentlabel{\@ifnotempty{#2}{\hspace*{4.6em}\makebox[3em][l]{%
    \ignorespaces#1 #2.\hfill}}}#3}
\title{Fujita decomposition and Massey product for fibered varieties}
\author{Luca Rizzi}
\address{Luca Rizzi\\Graduate School of Mathematical Sciences \\
the University of Tokyo\\
Tokyo, 153-8914 Japan\\
\texttt{rizzil@ms.u-tokyo.ac.jp}}
\author{Francesco Zucconi}
\address{Francesco Zucconi\\Department of Mathematics, Computer Science and Physics \\
Universit\`a di Udine\\
Udine, 33100\\ Italia
\texttt{Francesco.Zucconi@dimi.uniud.it}}
\begin{document}

\markboth{}{}
\begin{abstract} Let  $f\colon X\to B$ be a semistable fibration where $X$ is a smooth variety of dimension $n\geq2$ and $B$ is a smooth curve.
We give the structure theorem for the local system of the relative $1$-forms and of the relative top forms. This gives a neat interpretation of the second Fujita decomposition of $f_*\omega_{X/B}$.
 We apply our interpretation to show the existence, up to base change, of higher irrational pencils and on the finiteness of the associated monodromy representations under natural Castelnuovo-type hypothesis on local subsystems. Finally we give a criteria to have that $X$ is not of Albanese general type if $B=\mP^1$.
\end{abstract}

\maketitle
\section{Introduction}

Let $f\colon X\to B$ be a morphism with general smooth fiber between a smooth variety $X$ of dimension $n\geq2$ and a smooth curve $B$. We denote by $\omega_X$ the canonical sheaf of a variety $X$. The fiber $f^{-1}(b)$ over a point $b\in B$ will be usually denoted by $F$ or $F_b$ if we want to make the base point explicit. We mainly focus on \emph{semistable fibrations} that is we assume that all the fibers are reduced and normal crossing divisors.

A basic tool to study fibrations is the \emph{relative dualizing sheaf} $\omega_{X/B}=\omega_X\otimes f^*\omega_B^\vee$ and in particular its direct image $f_*\omega_{X/B}$ which is a vector bundle on $B$ of general fiber $H^0(F, \omega_F)$.
By a famous result of Fujita, see \cite{Fu} and \cite{Fu2}, $f_*\omega_{X/B}$ has two splittings classically known as \emph{first and second Fujita decomposition}. They are respectively 
\begin{equation}
f_*\omega_{X/B}\cong \sO_B^h\oplus \sE
\end{equation} where $\sE$ is a locally free nef sheaf on $B$ with $h^1(B,\sE\otimes \omega_B)=0$
and
\begin{equation}
f_*\omega_{X/B}\cong \sU\oplus \sA
\end{equation} where $\sU$ is a unitary flat vector bundle and $\sA$ ample.
Putting these two decompositions together we have the more general 
\begin{equation}
f_*\omega_{X/B}\cong \sO_B^h\oplus \sU'\oplus \sA
\end{equation} where $\sU\cong\sO_B^h\oplus \sU'$ is a decomposition of flat bundles and $\sU'$ has no global sections.

Recall that there is a one to one correspondence modulo isomorphism between 
\begin{enumerate}[{i)}]
	\item flat vector bundles on $B$
	\item local systems of $\mC$ vector spaces on $B$
	\item representations of the fundamental group $\pi_1(B,b)$.
\end{enumerate} see \cite[Proposition 9.11]{Vo1} and \cite[Corollary 3.10]{Vo2}.
Therefore, naturally associated to $\sU$ there are also a local system and a representation of the fundamental group and in this paper we will often use this correspondence.

The flat bundle $\sU$ is strictly related to the variation of Hodge structure of $X\to B$ in the following way. The restriction $f^0 \colon X^0 = f^{-1}(B^0) \to  B^0$ of $f$ to the locus of regular values $B^0$ is smooth
and defines a geometric VHS of weight $n-1$ on the local system $R^{n-1}f^0_*\mZ$ over $B^0$.
Denoting as usual $\sF^p\subset \sH^{n-1}=R^{n-1}f^0_*\mC_X\otimes \sO_{B^0}$ the Hodge filtration, we have
$$
\sF^{n-1}\cong f_*\omega_{X/B|B_0}
$$ and, denoting by $j\colon B^0\hookrightarrow B$ the inclusion, we have that the local system associated to $\sU^0=j^*\sU$ is a sublocal system of $R^{n-1}f^0_*\mC_X$ with fiber contained in $H^{n-1,0}=H^0(\omega_F)$. Note that while $R^{n-1}f_*\mC_X$ is not a local system and in general there is no way to extend $R^{n-1}f^0_*\mC_X\otimes\sO_{B^0}$ on the whole $B$, this can be done with $\sU^0$ and its trivial extension is exactly $\sU$. This comes from the fact that the hermitian form induced by the intersection form on the fibers forces the local monodromies around the points in $B\setminus B^0$ to be trivial, see for example \cite{CD1} where it is proved that the extension of $\sU^0$ can be done also if $f$ is not semistable. Actually, building on the seminal papers by Fujita, a lot of work has been devoted to solve the problem of the semi-ampleness of $f_*\omega_{X/B}$; see: \cite{CD1}, \cite{CD2}, \cite{CD3}, \cite{CK}.

In this paper the study of $\sU$ and the associated monodromy is done by the theory of \emph{Massey products}, previously known as \emph{adjoint forms}. In \cite{PT} this study is done for a fibred surface $f\colon S\to B$; we generalise and extend the results of \cite{PT}. We recall that Massey products have been introduced in \cite{CP} and \cite{PZ} and then applied in \cite{Ra}, \cite{PR}, \cite{CNP}, \cite{victor}, \cite{BGN}, \cite{RZ1},
\cite{RZ2}, \cite{RZ3} and recently \cite{CRZ}. 
The basic construction is as follows. Consider the fiber $F$ of our fibration $f$ and take $\eta_1,\dots,\eta_n$ 1-forms on $F$ in the kernel of the cup product $\cup \xi\colon H^0(\Omega^1_{F})\to H^0(\sO_F)$ where $\xi$ is the associated infinitesimal deformation $\xi\in H^1(T_F)$. Choosing $s_1,\dots,s_n\in H^0(\Omega^1_{X|F})$ liftings of the $\eta_i$'s we have a top form $\Omega\in H^{0}(\omega_{X|F})$ from the element $s_{1}\wedge\ldots\wedge s_{n}$. Since $\omega_{X|F}\cong \omega_{F}$ we 
actually obtain from $\Omega$ a top form of the canonical sheaf $\omega_{F}$. Such a form is the classical \emph{adjoint form} or \emph{Massey product}. We will denote it by $m_{\xi}(\eta_1,\ldots,\eta_n)$. A central definition is the definition of Massey triviality: we say that the sections $\eta_1,\dots,\eta_n$ are Massey trivial if their Massey product $m_{\xi}(\eta_1,\ldots,\eta_n)$ is a linear combination of the top forms $\eta_1\wedge\dots\wedge\widehat{\eta_i}\wedge\dots\wedge\eta_n$, $i=1\dots,n$. If furthermore the $\eta_1\wedge\dots\wedge\widehat{\eta_i}\wedge\dots\wedge\eta_n$ are linearly independent in $H^0(\omega_F)$ we say that the $\eta_i$ form a \emph{strict} subspace of $H^0(\Omega^1_F)$.
Massey triviality has often proved to be a useful tool for solving Torelli-type problems, here we will present a different implementation of the theory.

In Section \ref{sezioneaggiunta} we construct Massey products in families, using the following idea, see also \cite{PT}. Take the pushforward via $f$ of the exact sequence $$0\to f^*\omega_B\to \Omega^1_X\to \Omega^1_{X/B}\to 0$$ where $\Omega^1_{X/B}$ is the sheaf of relative differentials, see Section \ref{sez1} for details. Now  the connecting morphism
$$
\partial \colon f_*\Omega^1_{X/B}\to R^1f_*\sO_F\otimes\omega_B
$$ restricted on the general fiber is exactly $\cup\xi$, and we will denote its kernel by $\ker\partial:=K_\partial$. A local sections of $K_\partial$ can be considered as a family of liftable 1-forms on each fiber, therefore by choosing $n$ local sections of $K_\partial$ we can construct a family of Massey products which roughly corresponds to glueing together the $m_{\xi}(\eta_1,\ldots,\eta_n)$ for all the fibers over the considered open subset. The key point to this however is to show that all the sections of $K_\partial$ can be lifted to $f_*\Omega^1_X$; this is proved in Lemma \ref{split}. Of course the notion of Massey triviality and strictness also can be precisely extended in families, see Section \ref{sezioneaggiunta} for details. 
In this section we also highlight the key differences between Massey products of local or global sections of $K_\partial$, see Proposition \ref{lglob} and \ref{locglob}.

Now if $\dim X=2$ it is not difficult to see that the local system coming from the second Fujita decomposition is in fact contained in $K_\partial$ and a central idea in \cite{PT} is to construct Massey products starting from sections of this local system instead of just taking arbitrary sections of $K_\partial$. In the general case however the situation is more difficult because the local system of the Fujita decomposition consists of top forms on the fibers, whereas $K_\partial$ consists of 1-forms on the fibers, hence the first  is not contained in the second. Therefore in Sections \ref{sez1} and \ref{sez2} we construct a new local system $\mD$ which is contained in $K_\partial$. In Section \ref{sez2} in particular we show how $\mD$ is related to the sheaf of \textit{closed relative differential 1-forms} $\Omega^1_{X/B,d_{X/B}}$
and we also show how this interpretation applied on the sheaf of \textit{closed relative differential (n-1)-forms} $\Omega^{n-1}_{X/B,d_{X/B}}$ recovers exactly the local system of the Fujita decomposition, see Theorem \ref{fujitaii}. By analogy with $\mD$ we denote it by $\mD^{n-1}$ and we have $\mD^{n-1}\otimes \sO_B=\sU$.

Now let $A\subset B$ a contractible open subset and $W\subset \Gamma(A, \mD)$ a vector subspace of dimension at least $n$. We say that $W$ is \emph{Massey trivial} if any $n$-uple of sections in $W$ is Massey trivial, see Definition \ref{mastriv}.
Furthermore we say that a sublocal system $\mW<\mD$ is \emph{Massey trivial generated} if its general fiber is generated under the monodromy action of $\mD$ by a Massey trivial vector space $W$, see Definition \ref{mastrivgen}.
These objects are strictly related to the Castelnuovo-de Franchis theorem. In its classical formulation this result relates the existence of a non constant holomorphic map $S\to C$ from a surface to a curve of genus $g\geq2$  to the existence of two linearly independent 1-forms on $S$ with vanishing wedge product. 
Since then it has been generalized to higher dimensional varieties, see \cite{Ca2} and \cite{Ran}.

Recall that a \emph{higher irrational pencil} is a morphism with connected fibers $X\to Y$ with
target a normal variety $Y$ of maximal Albanese dimension and irregularity greater than its dimension.
Let $W$ be a Massey trivial subspace. Call $H$ the kernel of the monodromy representation of $\mD$, which is a normal subgroup of $\pi_1(B, b)$, and call $H_W$ the subgroup of $H$ which acts trivially on W. For every subgroup $K<H_W$, we denote by $B_K\to B$ the \'{e}tale base change of group $K$ and by $X_K\to B_K$ the associated fibration. In Section \ref{sez4} we prove the following theorem, which is a refinement of the generalized Castelnuovo-de Franchis \cite[Theorem 1.14]{Ca2}
\begin{thml}
\label{A}
Let $X\to B$ be a semistable fibration with $\dim X=n$. Let $A\subset B$ be an open subset and $W\subset \Gamma(A,\mD)$ a Massey trivial strict subspace.
Then $X_K$ has a higher irrational pencil $h_K\colon X_K\to Y$ over a normal $(n-1)$-dimensional variety of general type $Y$ such that $W\subset h_K^*(H^0(Y,\Omega^1_Y))$. Furthermore if $W$ is a maximal Massey trivial strict subspace we have the equality $W= h_K^*(H^0(Y,\Omega^1_Y))$.
\end{thml}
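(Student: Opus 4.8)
The plan is to reduce, after the base change $X_K\to B_K$, to an absolute Castelnuovo--de Franchis situation on $X_K$ and then to invoke the generalized theorem of \cite{Ca2}. First I would use the base change to turn the flat sections in $W$ into honest global $1$-forms on $X_K$. Since $K<H_W$ acts trivially on $W$, pulling back $\mD$ to $B_K$ makes the sections of $W$ into single-valued flat sections of the pulled-back system $\mD_K$ defined over all of $B_K$. By Lemma \ref{split} every section of $K_\partial$ lifts to $f_*\Omega^1_X$, and by the description of $\mD$ in terms of the closed relative differentials $\Omega^1_{X/B,d_{X/B}}$ obtained in Section \ref{sez2}, a flat section of $\mD_K$ lifts to a closed \emph{absolute} holomorphic $1$-form on $X_K$; here flatness is exactly what guarantees that the relative lift glues to a global closed form. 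In this way I obtain a subspace $\widetilde W\subset H^0(X_K,\Omega^1_{X_K})$ of closed $1$-forms, with $\dim\widetilde W=\dim W$, whose restriction to a general fibre recovers $W$.

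Second, I would translate Massey triviality into the isotropy condition needed for Castelnuovo--de Franchis. Choosing the global forms $\widetilde\eta_i$ as the lifts $s_i$ of the $\eta_i$, the adjoint form $m_\xi(\eta_1,\dots,\eta_n)$ is the restriction to $F$ of the absolute top form $\widetilde\eta_1\wedge\dots\wedge\widetilde\eta_n\in H^0(X_K,\omega_{X_K})$. Because after base change the $\widetilde\eta_i$ are \emph{global} sections, the dichotomy between local and global Massey products recorded in Proposition \ref{locglob} applies, and Massey triviality of every $n$-uple in $W$ becomes equivalent to the vanishing $\widetilde\eta_{i_1}\wedge\dots\wedge\widetilde\eta_{i_n}=0$ in $H^0(X_K,\omega_{X_K})$ for all choices of indices; that is, $\widetilde W$ is isotropic with all wedges of length $n$ zero. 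Strictness of $W$, which says that the $(n-1)$-fold wedges of the $\eta_i$ are linearly independent in $H^0(\omega_F)$, forces the $(n-1)$-fold wedges of the $\widetilde\eta_i$ to restrict to nonzero forms on $F$, so the generic rank of $\widetilde W$ is exactly $n-1$.

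Third, I would apply the generalized Castelnuovo--de Franchis theorem \cite[Theorem 1.14]{Ca2} to $\widetilde W$. An isotropic space of closed $1$-forms on the compact K\"ahler manifold $X_K$ whose wedges of length $n$ vanish while those of length $n-1$ do not produces, after Stein factorization, a fibration $h_K\colon X_K\to Y$ with connected fibres onto a normal variety $Y$ of dimension $n-1$, of maximal Albanese dimension and with $q(Y)\geq\dim\widetilde W\geq n>\dim Y$, such that $\widetilde W\subset h_K^*H^0(Y,\Omega^1_Y)$; this is precisely a higher irrational pencil and $Y$ is of general type. Identifying $W$ with $\widetilde W$ gives the inclusion $W\subset h_K^*(H^0(Y,\Omega^1_Y))$. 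For the last assertion, if $W$ is a maximal Massey trivial strict subspace then $\widetilde W$ is a maximal isotropic subspace, and the maximality clause of \cite[Theorem 1.14]{Ca2} upgrades the inclusion to the equality $W=h_K^*(H^0(Y,\Omega^1_Y))$.

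The main obstacle I anticipate is the second step, namely controlling the passage from the \emph{relative}, fibrewise Massey triviality of sections of the local system $\mD$ to the \emph{absolute} vanishing of the top wedges of the globalized forms $\widetilde\eta_i$ on $X_K$. This is where the closedness of the forms (via the identification of $\mD$ with closed relative differentials), the flatness provided by the local system structure, and the sharp local-versus-global comparison of Proposition \ref{locglob} must be combined; keeping track of the ambiguity in the lifts $s_i$ and showing that it does not reintroduce a nonzero absolute top form is the delicate point. A secondary technical issue is guaranteeing that $Y$ is genuinely of general type and that the construction descends correctly across the singular fibres of the semistable fibration.
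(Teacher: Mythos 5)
Your proposal follows the same skeleton as the paper's proof: pass to the \'etale cover $B_K\to B$ so that the sections of $W$ become global flat sections of the pulled-back system, lift them to global \emph{closed} holomorphic $1$-forms on $X_K$ whose $n$-fold wedges vanish, and feed these into a generalized Castelnuovo--de Franchis theorem. However, the two points you yourself flag as ``delicate'' are genuine gaps, not technicalities, and the tools you cite do not close them. The passage from Massey triviality to the actual vanishing $\widetilde\eta_{i_1}\wedge\dots\wedge\widetilde\eta_{i_n}=0$ is not given by Proposition \ref{locglob}: that proposition only places the top wedge of the lifts inside the image of $H^0(B,\omega_B)\otimes\bigwedge^{n-1}\widetilde W$, and for an arbitrary choice of lifts the wedge simply does not vanish. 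Producing one single coherent choice of lifts that kills \emph{all} $n$-fold wedges simultaneously is exactly the content of Proposition \ref{wedge0} and its global version Proposition \ref{localglobal}, whose inductive proof uses the strictness of $W$ in an essential way (in your argument strictness only appears to control the generic rank of $\widetilde W$). The paper's proof consists essentially of invoking Proposition \ref{localglobal}, which moreover guarantees that the corrected lifts are closed forms.

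The second gap is your application of \cite[Theorem 1.14]{Ca2} ``on the compact K\"ahler manifold $X_K$'': $X_K$ is compact only when $K$ has finite index in $\pi_1(B,b)$, and since $K<H_W<H=\ker\rho$, finite index holds only when the monodromy of $\mD$ is finite---which is the \emph{conclusion} of Theorem \ref{monfin}, not something available here. This is why the paper proves the closed-forms variant of Castelnuovo--de Franchis, valid without compactness (Remark \ref{bastachiuse}), and why the closedness of the lifts supplied by Proposition \ref{localglobal} is indispensable rather than incidental. Relatedly, \cite[Theorem 1.14]{Ca2} yields only that $Y$ is of \emph{Albanese} general type and treats exactly $n$ forms, whereas the statement claims $Y$ of general type and allows $\dim W>n$; both upgrades require the refined version Theorem \ref{cas2}, which the paper proves using Ueno's structure theorem \cite{U}. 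You have the right raw ingredients (closed forms, strictness), but as written the chain of citations would not compile into a proof.
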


In the remaining of Section \ref{sez4} we study the monodromy of a Massey trivial generated local system $\mW$. Call $\rho_\mW $ the action of the fundamental group $\pi_1(B, b)$ on the stalk of $\mW$ and call $G_\mW=\pi_1(B, b)/\ker \rho_\mW$ the monodromy group. 
We construct an action of this group on a suitable set $\sK$ of morphisms from the fiber $F$ to $Y$ as in Theorem \ref{A} and thanks to this action we prove
\begin{thml}
\label{B}
Let $f \colon X \to B$ be a semistable fibration on a smooth projective curve B and let $\mW<\mD$ be a strict Massey trivial generated local system.
Then the associated monodromy group $G_\mW$ is finite and the fiber of $\mW$ is isomorphic to 
$$
\sum_{k\in \sK} k^*H^0(Y,\Omega^1_Y).
$$
\end{thml}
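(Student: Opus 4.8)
The plan is to use Theorem~\ref{A} to turn the monodromy action on $\mW$ into a permutation action on a finite set of higher irrational pencils, and then to bound the kernel of that action by the automorphism groups of the (general type) targets.

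First I would unwind the generation hypothesis. Since $\mW$ is Massey trivial generated, its fiber $\mW_b$ is spanned by the monodromy translates $\rho_\mW(\gamma)W$, $\gamma\in\pi_1(B,b)$, of a fixed strict Massey trivial subspace $W$, which I may take to be maximal among such subspaces. The first thing to verify is that the monodromy of $\mD$ preserves Massey triviality, strictness and maximality: this is natural, because the Massey product is built from the variation of Hodge structure that the monodromy of $\mD$ encodes, so each $\rho_\mW(\gamma)W$ is again a maximal strict Massey trivial subspace. Applying the sharp form of Theorem~\ref{A} to each of them (after the \'etale base change, which leaves the fiber $F$ unchanged) yields a higher irrational pencil $k_\gamma\colon F\to Y_k$ onto a normal $(n-1)$-dimensional variety of general type together with the equality $\rho_\mW(\gamma)W=k_\gamma^{*}H^0(Y_k,\Omega^1_{Y_k})$. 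The morphisms so obtained form the set $\sK$, on which $G_\mW$ acts: the class $\gamma$ sends a pencil $k$ to the one attached to $\rho_\mW(\gamma)\big(k^{*}H^0(Y_k,\Omega^1_{Y_k})\big)$. The generation hypothesis then gives at once
\[
\mW_b=\sum_{\gamma\in\pi_1(B,b)}\rho_\mW(\gamma)W=\sum_{k\in\sK}k^{*}H^0(Y,\Omega^1_Y),
\]
which is the second assertion.

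The crux of the argument is to show that $\sK$ is \emph{finite}. This is a finiteness statement of de Franchis--Severi type: on the fixed smooth variety $F$ there are, up to isomorphism of the target, only finitely many morphisms with connected fibers onto varieties of general type of dimension $n-1$. Each element of $\sK$ is precisely such a pencil, and by the generalized Castelnuovo--de Franchis correspondence it is recovered from, and conversely determines, the maximal strict Massey trivial subspace $k^{*}H^0(Y_k,\Omega^1_{Y_k})\subset H^0(F,\Omega^1_F)$; distinct maximal subspaces thus give non-isomorphic pencils. Hence finiteness of the set of such pencils forces $\sK$ to be finite. I expect this to be the main obstacle: one has to know that a family of higher irrational pencils onto general type targets cannot move continuously, that is, the rigidity of maps onto varieties of general type (of Kobayashi--Ochiai type) together with the boundedness of the invariants of the targets, in order to rule out an infinite $\sK$.

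Granting finiteness of $\sK$, the conclusion follows by a short group-theoretic step. The permutation action provides a homomorphism $G_\mW\to\mathrm{Sym}(\sK)$ with finite image; let $N$ be its kernel. For $\gamma\in N$ and $k\in\sK$ the linear map $\rho_\mW(\gamma)$ preserves the subspace $k^{*}H^0(Y_k,\Omega^1_{Y_k})$ and, being compatible with the pencil $k$, is induced by a unique automorphism $\phi_{\gamma,k}\in\Aut(Y_k)$, uniqueness holding because $k$ is dominant with connected fibers. Assembling these yields a map
\[
N\longrightarrow\prod_{k\in\sK}\Aut(Y_k),
\]
which is injective: an element acting trivially on every $k^{*}H^0(Y_k,\Omega^1_{Y_k})$ acts trivially on their sum $\mW_b$, hence lies in $\ker\rho_\mW$ and is trivial in $G_\mW$. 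Since each $Y_k$ is of general type its automorphism group is finite, so $N$ is finite; as $G_\mW/N$ embeds into the finite group $\mathrm{Sym}(\sK)$, the monodromy group $G_\mW$ is finite, completing the proof.
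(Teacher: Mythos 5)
Your overall skeleton (monodromy as a permutation action on a finite set of pencils, finiteness from general type targets) is the right one, but there are two genuine gaps, both traceable to a single missing idea: the \'etale base change to the cover $B_\mW\to B$ classified by $H_\mW=\ker\rho_\mW$. \textbf{Gap 1.} You apply Theorem \ref{A} separately to every translate $\rho_\mW(\gamma)W$, which requires each translate to again be a strict Massey trivial subspace. You assert this is ``natural,'' but it is not a formal consequence of the definitions: Massey triviality (Definition \ref{mtrivial}) is containment in the $\sO_B$-submodule $\sW$ and involves choices of liftings in $f_*\Omega^1_{X,d}$, and neither of these is a monodromy-equivariant, local-system-theoretic notion --- the paper itself emphasizes precisely this discrepancy in Section \ref{sectionclosed}. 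The statement is true \emph{a posteriori}, but only as a consequence of the theorem you are trying to prove, so using it as an ingredient is circular. The paper avoids it entirely: on $B_\mW$ the space $W$ becomes a space of \emph{global} sections, Proposition \ref{localglobal} and Remark \ref{bastachiuse} give a single pencil $h\colon X_\mW\to Y$, and the elements of $\sK$ are the restrictions $k_g$ of $h\circ g$ to a fixed fiber $F_0$, with $g$ running over deck transformations. All pencils then have the \emph{same} target $Y$ (note your conclusion $\sum_k k^*H^0(Y_k,\Omega^1_{Y_k})$ with varying targets does not match the statement), and each translate is automatically a pullback subspace by Lemma \ref{formula1}, $k_g^*(\alpha)=g^{-1}k_e^*(\alpha)$, with nothing to check about preservation of Massey triviality. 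This also makes finiteness cheaper: you need the strong de Franchis--Severi/Maehara finiteness for dominant maps onto \emph{varying} general type targets, while the paper only needs Kobayashi--Ochiai finiteness of surjective morphisms $F_0\to Y$ for the fixed $Y$.

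\textbf{Gap 2.} Your group-theoretic step does not close. Because your action is on subspaces (equivalently on the pencils they determine), its kernel $N$ is the \emph{setwise} stabilizer of each $k^*H^0(Y_k,\Omega^1_{Y_k})$, and you dispose of it by claiming that such a monodromy operator ``is induced by a unique automorphism $\phi_{\gamma,k}\in\Aut(Y_k)$.'' You justify the uniqueness, but the real issue is \emph{existence}: a linear automorphism of $H^0(F,\Omega^1_F)$ preserving the subspace $k^*H^0(Y_k,\Omega^1_{Y_k})$ has no a priori geometric origin, and nothing in your setup makes $\rho_\mW(\gamma)$ ``compatible with the pencil.'' (It can be repaired via the uniqueness of the Castelnuovo--de Franchis pencil attached to a maximal subspace, plus finiteness of $\Aut$ of a normal projective variety of general type, but none of this is in your text.) The paper sidesteps all of this: with the action $g\cdot k_{g'}=k_{g'g}$ on \emph{morphisms}, Lemma \ref{formula1} shows that an element acting trivially on $\sK$ fixes every translate of $W$ \emph{pointwise}, hence acts trivially on $\mW_b=\sum_{g}g\cdot W$ and is trivial in $G_\mW$; the action is faithful (Lemma \ref{faithful}) and $G_\mW$ embeds directly into the permutations of the finite set $\sK$, with no automorphism groups needed.
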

As a corollary we obtain the following result on the monodromy of $\mD$ and $\mD^{n-1}$
\begin{cor}
If $\mD$ is Massey trivial generated by a strict subspace, then its monodromy group is finite.
If furthermore the map $\bigwedge^{n-1}\mD\to \mD^{n-1}$ is surjective, the local system $\mD^{n-1}$ also has finite monodromy. 
\end{cor}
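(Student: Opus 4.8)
The plan is to deduce the corollary from Theorem \ref{B} combined with the functoriality of the monodromy representation under exterior powers and quotients; essentially no new geometric input is required.

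For the first assertion I would simply apply Theorem \ref{B} with $\mW=\mD$. Since $\mD<\mD$ trivially and, by hypothesis, $\mD$ is Massey trivial generated by a strict subspace, $\mD$ is itself a strict Massey trivial generated local system. Theorem \ref{B} then yields directly that the monodromy group $G_\mD$ is finite, so there is nothing further to prove in this step.

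For the second assertion I would pass to the exterior power. Fixing a base point $b$, write $\rho_\mD\colon\pi_1(B,b)\to\mathrm{GL}(\mD_b)$ for the monodromy of $\mD$, whose image $G_\mD$ is finite by the first part. The monodromy of $\bigwedge^{n-1}\mD$ is by construction $\bigwedge^{n-1}\rho_\mD$, which factors through $G_\mD$; hence $\bigwedge^{n-1}\mD$ has finite monodromy, itself a quotient of $G_\mD$. Next I would use that the wedge map $\phi\colon\bigwedge^{n-1}\mD\to\mD^{n-1}$ is a morphism of local systems, so that on the fiber over $b$ it is a $\pi_1(B,b)$-equivariant surjection $\phi_b\colon\bigwedge^{n-1}(\mD_b)\twoheadrightarrow\mD^{n-1}_b$. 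Equivariance gives $\rho_{n-1}(\gamma)\circ\phi_b=\phi_b\circ(\bigwedge^{n-1}\rho_\mD)(\gamma)$ for every $\gamma$, where $\rho_{n-1}$ denotes the monodromy of $\mD^{n-1}$; since $\phi_b$ is surjective, $\rho_{n-1}(\gamma)$ is uniquely determined by $(\bigwedge^{n-1}\rho_\mD)(\gamma)$. Thus the monodromy group of $\mD^{n-1}$ is a quotient of the finite group acting on $\bigwedge^{n-1}\mD$, and is therefore finite.

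The argument is formal once Theorem \ref{B} is in hand, so I do not anticipate a serious obstacle. The only point deserving care is the claim that $\phi$ is genuinely a morphism of local systems, and not merely a map of the underlying sheaves, since this is what makes $\phi_b$ equivariant; I expect it to follow from the fact that $\mD^{n-1}$ is constructed from the same variation of Hodge structure as $\mD$ (Theorem \ref{fujitaii}), so that the wedge product is defined flatly and hence commutes with the monodromy. The surjectivity hypothesis is precisely what is needed to transport finiteness across $\phi$: without it one controls only the image of $\mD^{n-1}$ reached by wedging sections of $\mD$, which explains why it is imposed.
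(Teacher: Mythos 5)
Your proof is correct and follows essentially the same route as the paper: the first assertion is exactly Theorem \ref{monfin} applied with $\mW=\mD$, and the second transfers finiteness through the surjective morphism of local systems $\bigwedge^{n-1}\mD\to\mD^{n-1}$, whose monodromy is controlled by that of $\mD$. Your formulation is in fact slightly more precise than the paper's, which calls the monodromy group of $\mD^{n-1}$ a \emph{subgroup} of that of $\bigwedge^{n-1}\mD$ where your equivariance argument correctly exhibits it as a \emph{quotient}; either way finiteness follows.
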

See Corollary \ref{hyper} for an example where this is applied.

Recall that the finiteness of the monodromy group of a local system is equivalent to the semi-ampleness of the unitary flat vector bundle, see for example \cite[Theorem 2.5]{CD1}, hence since $\mD^{n-1}$ is the local system associated to $\sU$, this corollary is indeed a result on the semi-ampleness of $\sU$ and hence of $f_*\omega_{X/B}$ since $f_*\omega_{X/B}=\sU\oplus\sA$ and $\sA$ is ample. We expect generalisation of our corollary in the light of \cite{CK}.

The rest of this paper is structured as follows.
In Section \ref{sectionclosed} we introduce a new local system on $B$ which contains all the Massey products obtained by
sections of $\mD$ but without the ambiguity given by the choice of the liftings and we prove that the vanishing of this local system is strictly related to Massey triviality of \emph{global sections} of $\mD$. 
In Section \ref{sez6} we highlight the relation between Massey triviality and the first Fujita decomposition.
Finally in Section \ref{sez7} we find a bound for the integer $h$ in the first Fujita decomposition $f_*\omega_{X/B}=\sO_B^h\oplus \sE$ and we focus on the case of a fibration on $\mP^1$. Note that in this case the first and second Fujita decompositions coincide, that is $f_*\omega_{X/B}=\sO_B^h\oplus \sA$. We denote by $r$ the rank of the subsheaf generically generated by the global sections of $f_*\omega_{X/B}$. We prove:
\begin{thml} Let  $X$ be an irregular variety with $q(X)>n$. If $f\colon X\to\mP^1$ is a fibration with $r=p_g(F)$ then $X$ is not of Albanese general type
\end{thml}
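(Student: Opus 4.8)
The plan is to show that $X$ fails to be of maximal Albanese dimension; since a variety of Albanese general type is in particular of maximal Albanese dimension, this suffices. Concretely, I will produce a fibration $h\colon X\to Y$ onto a variety $Y$ with $0<\dim Y<n$ such that every holomorphic $1$-form of $X$ is pulled back from $Y$; then the Albanese map factors as $X\xrightarrow{h}Y\to\Alb(Y)\to\Alb(X)$, so $\dim a(X)\le\dim Y<n$ and $a$ cannot be generically finite.

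First I would translate the hypotheses into the language of Sections \ref{sez1} and \ref{sez2}. Since $B=\mP^1$ is simply connected the monodromy of $\mD$ and of $\mD^{n-1}$ is trivial, the first and second Fujita decompositions coincide, $\sU=\sO_B^h$, and every local section of $\mD$ is in fact global and flat. A holomorphic $1$-form on $X$ is $d$-closed, hence its image in $\Omega^1_{X/B}$ is a closed relative form lying in $K_\partial$, and by the closed-forms interpretation of Section \ref{sez2} it defines a global section of $\mD$ (the liftability being automatic, cf. Lemma \ref{split}). As $H^0(\mP^1,\Omega^1)=0$ this assignment is injective, so $W:=H^0(X,\Omega^1_X)\hookrightarrow\Gamma(B,\mD)$, and the assumption $q(X)>n$ gives $\dim W\ge n+1$.

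The core step is to prove that $W$ is Massey trivial, after which Theorem \ref{A} applies directly; note that no base change is needed, since $\pi_1(\mP^1,b)=1$ forces $H=H_W=1$, the only admissible $K$ is trivial and $X_K=X$. For global sections the Massey product $m_\xi(\eta_1,\dots,\eta_n)$ is the fibrewise restriction of the top form $\omega_1\wedge\cdots\wedge\omega_n\in H^0(X,\omega_X)$, whereas the sub-wedges $\eta_1\wedge\cdots\widehat{\eta_i}\cdots\wedge\eta_n$ are the restrictions of the $(n-1)$-forms $\omega_1\wedge\cdots\widehat{\omega_i}\cdots\wedge\omega_n$ and, through $\bigwedge^{n-1}\mD\to\mD^{n-1}$, land in the flat summand $\mD^{n-1}\otimes\sO_B=\sU=\sO_B^h$. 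Because $\omega_B=\sO(-2)$ we have $H^0(X,\omega_X)=H^0(B,f_*\omega_{X/B}\otimes\omega_B)$, which only meets the ample part $\sA$; combined with the hypothesis $r=p_g(F)$, i.e. that $f_*\omega_{X/B}$ is generically generated by its global sections, this forces the adjoint form to lie in the constant span of the sub-wedges, so every $n$-tuple in $W$ is Massey trivial. Choosing a maximal Massey trivial strict subspace and invoking Theorem \ref{A} yields a higher irrational pencil $h\colon X\to Y$ over a normal $(n-1)$-dimensional variety of general type with $W=h^*H^0(Y,\Omega^1_Y)$; a standard maximality argument then shows that all of $H^0(X,\Omega^1_X)$ is pulled back from $Y$.

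Conclusion and main obstacle. Once every $1$-form comes from the $(n-1)$-dimensional $Y$, the differential of the Albanese map has rank $\le n-1$ at a general point, so $X$ is not of maximal Albanese dimension and therefore not of Albanese general type. The delicate point, which I expect to be the main obstacle, is exactly the Massey triviality of the whole space $H^0(X,\Omega^1_X)$, equivalently that the global $1$-forms generate a subsheaf of $\Omega^1_X$ of rank at most $n-1$; this is where $q(X)>n$, the normalisation $r=p_g(F)$, and the triviality of the monodromy over $\mP^1$ must be used together. Geometrically it reflects the following rigidity: since $q(X)$ equals the dimension of the monodromy-invariant part of $H^0(F,\Omega^1_F)$, the induced map $\Alb(F)\to\Alb(X)$ is surjective and the Albanese images $a(F_t)$ generate $\Alb(X)$, and over the rational base $\mP^1$ such a family is forced to consist of translates of a single subvariety, whence $\dim a(X)\le\dim F=n-1$.
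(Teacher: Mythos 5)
Your proposal has a genuine gap at exactly the point you flag as the ``delicate point,'' and moreover the route you choose cannot be repaired. The decisive observation, which your argument misses, is that the hypotheses force much more than $\sU=\sO_B^h$: since $r=p_g(F)$ means $f_*\omega_X$ is generically generated by global sections, Grothendieck's theorem on $\mP^1$ gives $f_*\omega_X=\bigoplus_j\sO_{\mP^1}(a_j)$ with all $a_j\geq 0$, hence $f_*\omega_{X/B}=f_*\omega_X\otimes\sO_{\mP^1}(2)$ has all summands of degree $\geq 2$ and is \emph{ample}. Therefore $\sU=0$, i.e.\ $h=0$, and by Theorem \ref{secfujita} the local system $\mD^{n-1}$ vanishes. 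Since $(n-1)$-fold wedges of sections of $\mD$ land in $\mD^{n-1}$, every map $\bigwedge^{n-1}W\otimes\sO_A\to f_*\omega_{X/B|A}$ with $W\subset\Gamma(A,\mD)$ is identically zero. Consequently \emph{no} subspace is strict in the sense of Definition \ref{strict}, so the hypotheses of Theorem \ref{castmassey} (Theorem \ref{A}) can never be satisfied here: your step ``choosing a maximal Massey trivial strict subspace and invoking Theorem \ref{A}'' is vacuous, and no higher irrational pencil is produced this way. Your argument for Massey triviality is also a non sequitur turned circular: since the sub-wedges $\omega_i$ all vanish (they lie in $\mD^{n-1}\otimes\sO_B=\sU=0$), Massey triviality of an $n$-tuple of global $1$-forms $w_1,\dots,w_n$ would mean precisely that $w_1\wedge\dots\wedge w_n$ maps to zero in $f_*\omega_{X/B}$ --- which is essentially the non-maximality of the Albanese dimension, i.e.\ the statement to be proved; the sentence ``this forces the adjoint form to lie in the constant span of the sub-wedges'' assumes the conclusion.

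The paper's actual proof goes the opposite way and is much shorter: ampleness of $f_*\omega_{X/B}$ gives $\sU=0$, hence $\mD^{n-1}=0$, hence no $n$-dimensional subspace $W\subset H^0(X,\Omega^1_X)$ is strict. This already kills Albanese general type without any Castelnuovo--de Franchis argument: if the Albanese map were generically finite, there would be $w_1,\dots,w_n\in H^0(X,\Omega^1_X)$ with $w_1\wedge\dots\wedge w_n\neq 0$ at a general point $p$; since the kernel of $T_p^\vee X\to T_p^\vee F$ is one-dimensional, some $(n-1)$ of the restricted forms would have nonvanishing wedge in $\omega_F$ at $p$, i.e.\ a nonzero section of $\mD^{n-1}$, a contradiction. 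So the correct mechanism is a vanishing statement (failure of strictness everywhere), not the construction of a pencil; your plan of building $h\colon X\to Y$ with all $1$-forms pulled back is exactly what the vanishing of $\mD^{n-1}$ obstructs you from doing via Theorem \ref{A}.
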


\section{Fibration on curves and a local system of 1-forms}
\label{sez1}

Let $f\colon X\to B$ be a {\it{semistable fibration}} of an $n$-dimensional smooth projective variety $X$ over a smooth projective curve $B$. Denote by $B_0$ the locus of singular values of $f$ and by $B^0=B\setminus B_0$ the open set of regular values.
\subsection{Torsion freeness of the sheaf of relative differentials}
The exact sequence 
\begin{equation}
\label{diffrel}
0\to f^*\omega_B\to \Omega^1_X\to \Omega^1_{X/B}\to 0 
\end{equation} defines the sheaf of relative differentials $\Omega^1_{X/B}$.

This sheaf is not torsion free for a general fibration $f$, but it turns out to be torsion free in our setting, when the fibers of $f$ are reduced and normal crossing divisors.
An easy way to see this is using the sheaf of logarithmic differential forms; see  \cite{De}.

Recall that if $X$ is a smooth variety and $D$ is a normal crossing divisor on $X$, locally given on an open set $U$ by $z_1z_2\cdots z_k=0$, we can define the sheaf $\Omega^1_X(\text{log }D)$ of \emph{logarithmic differentials} as the locally free $\sO_X$-module generated by $dz_1/z_1,\ldots,dz_k/z_k,dz_{k+1},\ldots,dz_{n}$.
This sheaf fits into the following exact sequence
\begin{equation}
0\to \Omega^1_X\to \Omega^1_X(\text{log }D) \stackrel{\text{Res}}{\rightarrow} \bigoplus_i \sO_{D_i}\to 0
\end{equation} where $D_i$ are the irreducible components of $D$ and $\text{Res}$ is the residue map.

In the case of a semistable fibration, it is not difficult to check, for example by local computation, that we have an injection 
\begin{equation}
0\to f^*\omega_B(\text{log }B_0)\to \Omega^1_X(\text{log }f^{-1}B_0)
\end{equation} with locally free cokernel which is denoted by $\Omega^1_{X/B}(\text{log})$, giving us the short exact sequence 
\begin{equation}
\label{logdiffrel}
0\to f^*\omega_B(\text{log }B_0)\to \Omega^1_X(\text{log }f^{-1}B_0)\to \Omega^1_{X/B}(\text{log})\to 0.
\end{equation} Note that this sequence is the locally free version of (\ref{diffrel}), more precisely call $P_i$ the points in $B_0$ and $E_j$ the irreducible components of $f^{-1}B_0$, that is the irreducible components of the singular fibers, we have the following diagram

\begin{equation}
\xymatrix{
&0\ar^{(1)}[d]&0\ar[d]&0\ar[d]&\\
0\ar[r]& f^*\omega_B\ar[d]\ar[r]&\Omega^1_X\ar[d]\ar[r]&\Omega^1_{X/B}\ar[r]\ar^{(3)}[d]&0\\
0\ar[r]& f^*\omega_B(\text{log }B_0)\ar[d]\ar[r]&\Omega^1_X(\text{log }f^{-1}B_0)\ar[d]\ar[r]&\Omega^1_{X/B}(\text{log})\ar[r]\ar^{(4)}[d]&0\\
0\ar[r]&\bigoplus_i f^*\mC_{P_i}\ar^{(2)}[r]\ar[d] &\bigoplus_j \sO_{E_j}\ar[r]\ar[d]&\bigoplus_j \sO_{E_j}/\bigoplus_i f^*\mC_{P_i}\ar[r]\ar[d]&0\\
&0&0&0&
}
\label{diagrammalog}
\end{equation}
The only arrows that need an explanation are the following
\begin{enumerate}
	\item This injectivity comes from the flatness of $f$.
	\item This arrows is the obvious one coming from the maps $f^*\mC_{P_i}\cong \sO_{f^{-1}P_{i}}\to \sO_{E_j}$ where $E_j\subset f^{-1}P_i$. It is injective for some $E_j\subset f^{-1}P_i$ because the fibers are reduced.
	\item This arrows, which exists by commutativity, is injective because (2) is 
	\item This arrow also follows by the commutativity of the diagram
\end{enumerate}

In particular we have that $\Omega^1_{X/B}$ is torsion free because by arrow (3) it is a subsheaf of a locally free sheaf.

Something similar can also be done in the case where the base of the fibration is not a curve, for details see \cite{Il} and \cite{MR}.

In the following we will denote by $\Omega^k_{X/B}$ the exterior powers of the sheaf of relative differentials, i.e.
\begin{equation}
\Omega^k_{X/B}=\bigwedge^k\Omega^1_{X/B}.
\end{equation}

The relative dualizing sheaf on the other hand is defined by
\begin{equation}
\omega_{X/B}=\Omega^n_X\otimes f^*\omega_B^\vee 
\end{equation} and it is locally free since both $X$ and $B$ are smooth.
Recall that $f_*\Omega^{n-1}_{X/B}$ and $f_*\omega_{X/B}$ are isomorphic when restricted to $B^0$. Call $f_*\omega_{X/B}$ {\it{the Fujita sheaf of the fibration $f\colon X\to B$}}.

\subsection{Sub-sheaves of the Fujita sheaf}
It turns out that $f_*\omega_{X/B}$ contains a couple of sub-sheaves which are interesting for our purposes, in the remaining of this sections we will introduce both of them.

Over each regular value $b\in B^0$, Sequence (\ref{diffrel}) restricted to the fiber $F_b$ is the exact sequence
\begin{equation}
\label{solita}
0\to T_{B,b}^\vee\otimes\sO_{F_b}\to \Omega^1_{X|F_b}\to \Omega^1_{F_b}\to 0 
\end{equation} which gives an infinitesimal deformation of the fiber $F_b$, $\xi_b\in \text{Ext}^1(\Omega^1_{F_b},\sO_{F_b})\cong H^1(F_b, T_{F_b})$.

Take the pushforward of sequence (\ref{diffrel})
\begin{equation}
0\to f_*f^*\omega_B\to f_*\Omega^1_X\to f_*\Omega^1_{X/B}\to R^1f_*f^*\omega_B\to \dots
\end{equation} which by projection formula is 
\begin{equation}
\label{seq1}
0\to \omega_B\to f_*\Omega^1_X\to f_*\Omega^1_{X/B}\to R^1f_*\sO_X \otimes\omega_B\to \dots
\end{equation}
Over each regular value $b\in B^0$, this sequence gives 
\begin{equation}
0\to H^0(\sO_{F_b})\to H^0(\Omega^1_{X|F_b})\to H^0(\Omega^1_{F_b})\stackrel{\delta_{\xi_b}}{\rightarrow} H^1(\sO_{F_b})\to \dots
\end{equation}
which is exactly the cohomology long exact sequence of (\ref{solita}). In particular recall that the connecting morphism $\delta_{\xi_{b}}$ is given by the cup product with the Kodaira-Spencer class $\xi_b$.  

\subsubsection{ $K_\partial$ or the sheaf of fiberwise liftable $1$-forms} We can now define the first of the two above mentioned sheaves.
\begin{defn}
The sheaf $K_\partial$ is the kernel of the map $\partial\colon f_*\Omega^1_{X/B}\to R^1f_*\sO_X \otimes\omega_B$ of sequence (\ref{seq1}).
\end{defn}
We note that, over the general $b\in B^0$, it holds that 
\begin{equation}
\label{kerxi}
K_\partial\otimes \mC(b)= \ker \delta_{\xi_b}
\end{equation} that is, over suitable subsets $A\subset B$, we can think of $K_\partial$ as the sheaf of holomorphic one forms on the fibers of $f$ which are liftable to the variety $X$.

The sheaf $K_\partial$ is not locally free in general, but it is in the case of a semistable fibration. In fact recall that the sheaf of relative differentials $\Omega^1_{X/B}$ is torsion free, hence its direct image $f_*\Omega^1_{X/B}$ is torsion free and hence locally free on the curve $B$. It follows that $K_{\partial}$ is torsion free, hence again locally free on $B$.

\begin{lem}
\label{split}
If $X\to B$ is a semistable fibration, the exact sequence 
\begin{equation}
0\to \omega_B\to f_*\Omega^1_X\to K_\partial\to 0
\label{seqK}
\end{equation} splits.
\end{lem}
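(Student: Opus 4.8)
The plan is to kill the extension class of the sequence (\ref{seqK}) in $\mathrm{Ext}^1_{\sO_B}(K_\partial,\omega_B)$. Since $B$ is a smooth projective curve and every sheaf in (\ref{seqK}) is locally free, Serre duality yields
\[
\mathrm{Ext}^1_{\sO_B}(K_\partial,\omega_B)\cong H^1(B,K_\partial^\vee\otimes\omega_B)\cong H^0(B,K_\partial)^\vee ,
\]
and under this isomorphism the extension class is nothing but the functional $\sigma\mapsto\partial'(\sigma)$, where $\partial'\colon H^0(B,K_\partial)\to H^1(B,\omega_B)\cong\mC$ is the connecting map of the long exact sequence of (\ref{seqK}). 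Hence (\ref{seqK}) splits if and only if $\partial'=0$, i.e. if and only if the restriction map $\mathrm{res}\colon H^0(X,\Omega^1_X)=H^0(B,f_*\Omega^1_X)\to H^0(B,K_\partial)$ is surjective. First I would record this reduction; everything after is a proof that every global fiberwise-liftable relative $1$-form extends to a global holomorphic $1$-form on $X$.

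To prove surjectivity of $\mathrm{res}$ I would use Hodge theory. A section $\sigma\in H^0(B,K_\partial)\subset H^0(X,\Omega^1_{X/B})$ restricts on each smooth fibre $F_b$ to a holomorphic $1$-form, hence is a holomorphic section of the Hodge bundle $\sF^1\subset\sH^1$ of the weight-$1$ variation of Hodge structure of $f^0$ over $B^0$; because $\sigma\in\Ker\partial$, this section is killed by the Higgs field $\sF^1\to(\sH^1/\sF^1)\otimes\omega_B$ induced by Gauss--Manin, which over $B^0$ is exactly $\partial$. As a nonzero holomorphic $1$-form on $F_b$ is nonzero in $H^1(F_b,\mC)$, the resulting map $\iota\colon H^0(B,K_\partial)\hookrightarrow H^0(B^0,\sF^1)$ is injective. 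For $\omega\in H^0(X,\Omega^1_X)$ the class of $\omega|_{F_b}$ is the restriction of the fixed class $[\omega]\in H^1(X,\mC)$ and is therefore a flat, monodromy-invariant section; moreover, by the $E_2$-degeneration of the Leray spectral sequence of $f$ and the global invariant cycle theorem (Deligne), the sequence of pure Hodge structures
\[
0\to H^1(B,\mC)\to H^1(X,\mC)\to H^0(B,R^1f_*\mC)\to 0
\]
is exact and strict, so passing to $(1,0)$-parts shows that $\mathrm{res}$ followed by $\iota$ surjects onto the space $I^{1,0}$ of monodromy-invariant classes of type $(1,0)$.

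It therefore remains to prove that $\iota$ has image exactly $I^{1,0}$: granting this, the injectivity of $\iota$ together with the surjectivity of $\iota\circ\mathrm{res}$ onto $I^{1,0}$ forces $\mathrm{res}$ itself to be surjective, which is what we want. The inclusion $I^{1,0}\subseteq\iota\big(H^0(B,K_\partial)\big)$ is immediate, so the real content is the reverse inclusion: every global section $\sigma$ of $K_\partial$ is not merely killed by the Higgs field but is genuinely Gauss--Manin flat. I would establish this by a Hodge-norm argument. With the induced Hodge metric on the subbundle $\sF^1$, for a holomorphic section $\sigma$ annihilated by the Higgs field the function $\log\|\sigma\|^2$ is subharmonic on $B^0$ (the curvature of the Hodge subbundle being Griffiths-seminegative). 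Since $\sigma$ extends as an algebraic section over the whole curve $B$, semistability provides the standard logarithmic Hodge-norm estimates at the points of $B_0$, and a subharmonic function on $B^0$ with such controlled growth extends across $B_0$ and is constant on the compact curve $B$. Constancy of $\|\sigma\|^2$ forces $\nabla^{1,0}\sigma=0$, hence $\sigma$ is flat, so $[\sigma]\in I^{1,0}$ and by the invariant cycle theorem $\sigma$ lifts to a class in $H^0(X,\Omega^1_X)$.

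The genuinely delicate point is this last step --- upgrading ``killed by the Higgs field'' to ``flat'' by controlling a global section of $K_\partial$ at the singular fibres. This is precisely where all the semistability hypotheses are used, mirroring the extension of $\sU^0$ to the unitary flat bundle $\sU$ recalled in the Introduction; the remainder of the argument is formal Hodge theory and Serre duality.
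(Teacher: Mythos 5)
Your opening reduction is correct, and it is exactly the paper's first step: by Serre duality the splitting of (\ref{seqK}) is equivalent to the vanishing of the connecting map $H^0(B,K_\partial)\to H^1(B,\omega_B)$, i.e.\ to the surjectivity of $H^0(B,f_*\Omega^1_X)\to H^0(B,K_\partial)$, i.e.\ to the injectivity of $H^1(B,\omega_B)\to H^1(B,f_*\Omega^1_X)$. At this point the paper finishes in two lines, with no Hodge theory at all: composing with the injective Leray edge map $H^1(B,f_*\Omega^1_X)\hookrightarrow H^1(X,\Omega^1_X)$ gives the pullback $H^1(B,\omega_B)\to H^1(X,\Omega^1_X)$, which sends the class of a point to the class of a fiber; the fiber class is nonzero in $H^{1,1}(X)$ (it has positive intersection with a power of an ample class), so the composite, and hence the first map, is injective. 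You instead try to prove surjectivity of the restriction map by Hodge theory, and that route contains a genuine gap.

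The gap is the step upgrading ``killed by the Higgs field'' to ``Gauss--Manin flat''. The subharmonicity of $\|\sigma\|^2$ on $B^0$ for holomorphic $\sigma$ with $\theta(\sigma)=0$ is fine, since the curvature contraction $\langle\Theta\sigma,\sigma\rangle$ is proportional to $\|\theta(\sigma)\|^2=0$ (though, as an aside, the top Hodge bundle $\sF^1$ with the Hodge metric is Griffiths \emph{semipositive}, not seminegative --- this is what makes $f_*\omega_{X/B}$ nef --- so it is only on $\ker\theta$ that your curvature argument works). The failure is at the punctures: your assertion that ``a subharmonic function on $B^0$ with such [logarithmic] controlled growth extends across $B_0$ and is constant'' is false as stated. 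The function $u(t)=-\log|t|$ on a punctured disk is harmonic, has logarithmic growth, and does not extend subharmonically; the removable singularity theorem requires $u$ bounded above, or at least $u=o(\log(1/|t|))$. Schmid's estimates give precisely the borderline rate you cannot use: for a section of the canonical extension of $\sF^1$ in weight one, the a priori bound is only $\|\sigma\|^2=O(\log(1/|t|))$. To close the argument you would have to prove that a section of $K_\partial$ has \emph{bounded} Hodge norm near each point of $B_0$, which requires the nilpotent orbit theorem to compare $\ker\theta$ with $\ker N$ at the degeneration --- exactly the delicate analysis at infinity of the Fujita/Catanese--Dettweiler circle of ideas. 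You flag this as ``the genuinely delicate point'' but then only assert it; and since the claim you need (every global section of $K_\partial$ is flat) is, via the invariant cycle theorem, essentially equivalent to the lemma itself, that assertion is the entire content of the proof. As it stands the proposal is incomplete, and the paper's elementary topological finish is the efficient way around precisely this difficulty.
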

\begin{proof}
The proof is the same as \cite[Lemma 3.5]{PT} and it is standard once we made sure that $K_\partial$ is locally free. The central idea is that the splitting of this sequence is equivalent to the splitting of its dual tensored by $\omega_B$, that is 
$$
0\to K_\partial^\vee\otimes\omega_B\to {f_*\Omega^1_X}^\vee\otimes \omega_B\to \sO_B\to 0
$$
This splitting is equivalent to the vanishing of the cohomology map $H^0(\sO_B)\to H^1(K_\partial^\vee\otimes\omega_B)$ which by duality is equivalent to the vanishing of $H^0(K_\partial)\to H^1(\omega_B)$. By Sequence (\ref{seqK}) we prove instead the injectivity of $H^1(\omega_B)\to H^1(f_*\Omega^1_X)$ and this comes from the fact that if we compose this map with the injective map given by the Leray spectral sequence we obtain
$$
H^1(B,\omega_B)\to H^1(B,f_*\Omega^1_X)\to H^1(X,\Omega^1_X)
$$ which is still injective because  $H^1(B,\omega_B)\to H^1(X,\Omega^1_X)$ sends the class of a point to the class of a fiber.
\end{proof}

\begin{rmk}
\label{remark}
Call $f^0\colon X^0\to B^0$ our fibration restricted to the locus of regular values. Denote as usual by $\sH^{p,q}$ the Hodge bundles on $B^0$. The map $\partial\colon f_*\Omega^1_{X/B}\to R^1f_*\sO_{X} \otimes\omega_{B}$ on $B^0$ is exactly the variation of Hodge structure
\begin{equation}
\overline{\nabla}^{1,0}\colon \sH^{1,0}\to \sH^{0,1}\otimes \omega_B
\label{variation}
\end{equation}
\end{rmk}
See \cite[Section 10.2]{Vo1}.
\subsubsection{ $\mD$ or the sheaf of fiberwise forms liftable to $d$-closed $1$-forms}
The second sheaf we are interested in, which will turn out to be a local system on $B$, comes from considering the de Rham closed holomorphic differential forms.

Consider the de Rham sequences on $X$ and on $B$ respectively:
\begin{equation}
0\to \mC_X\to \sO_X\to \Omega^1_{X}\to \Omega^2_{X}\to \dots\to \Omega^n_{X}\to 0
\label{DRX}
\end{equation} and 
\begin{equation}
0\to \mC_B\to \sO_B\to \omega_B\to 0
\label{DRB}
\end{equation}
From (\ref{DRX}) we get the short exact sequence 
\begin{equation}
0\to \mC_X\to \sO_X\to \Omega^1_{X,d}\to 0
\label{1chiuse}
\end{equation} where $\Omega^1_{X,d}$ is the sheaf of de Rham closed holomorphic differential forms. 

Comparing the pushforward of (\ref{1chiuse}) together with (\ref{DRB}) we get the following commutative diagram
\begin{equation}
\xymatrix{
0\ar[r]& f_*\mC_X\ar @{=}[d]\ar[r]&f_*\sO_X\ar @{=}[d]\ar[r]&f_*\Omega^1_{X,d}\ar[r]&R^1f_*\mC_X\ar[r]&R^1f_*\sO_X\ar[r]&\dots\\
0\ar[r]&\mC_B\ar[r]&\sO_B\ar[r]&\omega_B\ar @{^{(}->}[u]\ar[r]&0
}
\label{dia1}
\end{equation}

\begin{defn}
The sheaf $D$ is defined as the cokernel of the vertical map $\omega_B\to f_*\Omega^1_{X,d}$. Alternatively by diagram (\ref{dia1}) it is the kernel of the map $R^1f_*\mC_X\to R^1f_*\sO_X$.
\end{defn}
 
\begin{lem}
\label{lemmainclusione}
We have an inclusion of sheaves $D\hookrightarrow K_{\partial}$
\end{lem}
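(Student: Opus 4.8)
The plan is to exhibit both $D$ and $K_\partial$ as quotients of subsheaves of $f_*\Omega^1_X$ by the \emph{same} subsheaf $\omega_B$, so that the claimed inclusion $D\hookrightarrow K_\partial$ is simply the map on quotients induced by an inclusion of subsheaves.

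First I would record that $K_\partial\cong f_*\Omega^1_X/\omega_B$. Indeed, in Sequence (\ref{seq1}) the kernel of $f_*\Omega^1_X\to f_*\Omega^1_{X/B}$ is $\omega_B$ and its image is $\ker\partial=K_\partial$; this is exactly the short exact Sequence (\ref{seqK}).

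Next I would identify $D$ with $f_*\Omega^1_{X,d}/\omega_B$. By Sequence (\ref{1chiuse}) and the holomorphic Poincar\'e lemma, $\Omega^1_{X,d}$ is a subsheaf of $\Omega^1_X$, so left-exactness of $f_*$ gives an inclusion $\iota\colon f_*\Omega^1_{X,d}\hookrightarrow f_*\Omega^1_X$. The vertical arrow $\omega_B\to f_*\Omega^1_{X,d}$ of Diagram (\ref{dia1}) is pullback of $1$-forms from $B$ (all closed since $\dim B=1$), and composing it with $\iota$ gives back the map $\omega_B\to f_*\Omega^1_X$ of Sequence (\ref{seq1}). Since that composite is injective, $\omega_B\to f_*\Omega^1_{X,d}$ is injective too, and hence its cokernel $D$ equals $f_*\Omega^1_{X,d}/\omega_B$.

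Putting these together, I have a chain of subsheaves $\omega_B\subset f_*\Omega^1_{X,d}\subset f_*\Omega^1_X$, and the inclusion $\iota$ descends to the desired monomorphism
$$
D=f_*\Omega^1_{X,d}/\omega_B\hookrightarrow f_*\Omega^1_X/\omega_B=K_\partial.
$$
Equivalently, applying the snake lemma to the morphism of short exact sequences $(\omega_B\to f_*\Omega^1_{X,d}\to D)\to(\omega_B\to f_*\Omega^1_X\to K_\partial)$, with the identity on $\omega_B$ and $\iota$ in the middle, injectivity of $\iota$ forces injectivity of the induced map $D\to K_\partial$. The argument is essentially formal; the only point needing care---and the one I would verify explicitly---is the compatibility of the two embeddings of $\omega_B$, namely that the arrow $\omega_B\to f_*\Omega^1_{X,d}$ is the restriction of $\omega_B\to f_*\Omega^1_X$. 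This is immediate once one notes that both are induced by $f^*$ applied to the (necessarily closed) forms on the curve $B$.
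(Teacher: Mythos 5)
Your proof is correct and is essentially the paper's own argument made explicit: the paper's proof simply notes that the defining sequence $0\to \omega_B\to f_*\Omega^1_{X,d}\to D\to 0$ is sequence (\ref{seqK}) restricted to closed holomorphic one-forms, i.e.\ both $D$ and $K_\partial$ are quotients by the same copy of $\omega_B$ of the nested subsheaves $f_*\Omega^1_{X,d}\subset f_*\Omega^1_X$. Your explicit checks --- that $\omega_B\hookrightarrow f_*\Omega^1_{X,d}$ is the restriction of $\omega_B\hookrightarrow f_*\Omega^1_X$ (pullbacks from a curve being closed) and the snake-lemma step --- are precisely the details the paper leaves implicit in the word ``immediate.''
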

\begin{proof}
Immediate by the fact that the exact sequence defining $D$
\begin{equation}
0\to \omega_B\to f_*\Omega^1_{X,d}\to D\to 0
\end{equation} is basically sequence (\ref{seqK}) restricted to the closed holomorphic one-forms.
\end{proof}
We can therefore interpret $D$ as the sheaf of  holomorphic one-forms on the fibers of $f$ which are liftable to \textit{closed} holomorphic forms of the variety $X$.

\begin{lem}
\label{loc}
The sheaf $D$ is a local system.
\end{lem}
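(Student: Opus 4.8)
The plan is to prove the statement by reducing everything to the open set $B^0$ of regular values, where the theory of variations of Hodge structure is available, and then to control the behaviour at the finitely many points of $B_0$. Over $B^0$ the fibration $f^0\colon X^0\to B^0$ is smooth and proper, so $R^1f^0_*\mC_X$ underlies a polarized variation of Hodge structure of weight one, with Hodge bundles $\sH^{1,0}=f^0_*\Omega^1_{X^0/B^0}$ and $\sH^{0,1}=R^1f^0_*\sO_{X^0}$. By definition $D|_{B^0}=\ker(R^1f^0_*\mC_X\to R^1f^0_*\sO_{X^0})$ is the subsheaf of those flat sections of the local system which lie pointwise in $\sF^1=\sH^{1,0}$; equivalently, by Remark \ref{remark}, it is the subsheaf of flat sections contained in $\ker\overline{\nabla}^{1,0}=K_\partial|_{B^0}$. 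First I would check that $D|_{B^0}$ is a local subsystem: the property of a flat section to stay in $\sF^1$ along continuation is preserved by parallel transport, so the fibre $D_b\subset H^{1,0}(F_b)$ is monodromy invariant, and a standard stabilisation of $\dim D_U$ as $U$ shrinks shows the sheaf is locally constant there.

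The key step is to prove that the local monodromy of $D|_{B^0}$ around each point $P\in B_0$ is trivial. Here I would use two facts. On one hand, since $f$ is semistable the monodromy operator $T$ acting on the nearby fibre $H^1(F_t,\mC)$ is unipotent. On the other hand, the polarization $Q$ of the weight-one Hodge structure satisfies the Hodge--Riemann relations, so the Hermitian form $h(u,v)=iQ(u,\bar v)$ is positive definite on $H^{1,0}(F_t)$, hence on the monodromy-invariant subspace $D_t\subset H^{1,0}(F_t)$. Because $T$ is defined over $\mR$ it commutes with conjugation and therefore preserves $h$ on $D_t$, so that $T|_{D_t}$ is unitary; being at the same time unipotent it must be the identity. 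This is precisely the mechanism, coming from the definiteness of the intersection form on $(1,0)$-classes, by which the unitary summand $\sU$ is seen to extend across $B_0$ in the introduction, see \cite{CD1}; I expect this definiteness-plus-unipotence argument to be the conceptual heart of the proof.

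Having trivial local monodromies, the monodromy representation of $D|_{B^0}$ factors through $\pi_1(B,b)$, so $D|_{B^0}$ extends to a genuine local system $\widehat{D}=j_*(D|_{B^0})$ on the whole of $B$, where $j\colon B^0\hookrightarrow B$. It remains to identify $D$ with $\widehat{D}$, and this is the technical point where I expect the real work. The adjunction morphism $D\to j_*j^*D=\widehat{D}$ is injective because $D$ has no sections supported on $B_0$, being a subsheaf, via Lemma \ref{lemmainclusione}, of the locally free sheaf $K_\partial$; thus only surjectivity at the points of $B_0$ has to be checked. Concretely one must show that every class in $D_t$ lifts to the stalk $D_P=\ker(H^1(F_P,\mC)\to (R^1f_*\sO_X)_P)$. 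For this I would invoke the local invariant cycle theorem, which gives the surjectivity of $H^1(F_P,\mC)\to H^1(F_t,\mC)^T$, together with the compatibility of the specialization map with the Hodge filtrations coming from the limiting mixed Hodge structure: since the elements of $D_t$ are pure of type $(1,0)$ by the definiteness above, they lift to $\sF^1$-classes on $F_P$, which lie in $D_P$. This matches the stalks at $B_0$ and shows $D=\widehat{D}$, completing the proof that $D$ is a local system.
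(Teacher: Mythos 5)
Your proposal is correct and follows essentially the same route as the paper's own proof: restrict to $B^0$ where $j^*D$ is a sub-local system of $j^*R^1f_*\mC_X$, kill the local monodromies by combining unitarity (positive definiteness of the Hermitian form on $H^{1,0}$, since the stalks of $D$ lie there) with unipotence (semistability/Monodromy Theorem), extend as $j_*j^*D$, and identify $D\cong j_*j^*D$ using the local invariant cycle theorem for surjectivity and the torsion-freeness of $K_\partial$ (Lemma \ref{lemmainclusione}) for injectivity. The only difference is presentational: you spell out the stalk-level Hodge-filtration argument at points of $B_0$, where the paper simply deduces surjectivity of $D\to j_*j^*D$ from that of $R^1f_*\mC_X\to j_*j^*R^1f_*\mC_X$.
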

\begin{proof}
See \cite[Lemma 2.4]{PT}. The idea of the proof is the following. Call $j\colon B^0 \to B$ the injection on the locus corresponding to the smooth fibers of $f$. Since $D$ is a subsheaf of $R^1f_*\mathbb C$, its restriction $j^*D$ is a subsheaf of the local system $j^*R^1f_*\mathbb C$, therefore $j^*D$ is itself a local system. Now it is standard to prove that $j^*D$ extends to a local system on the whole curve $B$. To do this it is enough to prove that the monodromy around the singular values is trivial. Since by definition $D$ is the kernel of the morphism $R^1f_*\mC_X\to R^1f_*\sO_X$, its stalk over a point $b\in B^0$ is contained in the kernel of the projection map $H^1(F_b,\mC)\to H^{0,1}(F_b)$ and so it is a vector subspace of $H^{1,0}(F_b)$. The standard Hermitian form is positive definite on $H^{1,0}(F_b)$, hence the monodromy representation associated to $j^*D$ is unitary flat, furthermore it is unipotent by the Monodromy Theorem (see \cite[Theorem 11.8]{PS}), hence it is trivial. This proves that $j_*j^*D$ is a local system.

The last step consists in noticing that $  R^1f_*\mC_X\to j_*j^*R^1f_*\mC_X$ is surjective by the local invariant monodromy theorem, see for example \cite[Theorem 5.3.4]{CEZGT}, so $D\to j_*j^*D$ is also surjective.
Since $D\to j_*j^*D$ is an isomorphism on $B^0$, it immediately follows that $D\to j_*j^*D$ is an isomorphism because otherwise the kernel would be a torsion subsheaf of $D$, and hence trivial since $D\hookrightarrow K_\partial$ and $K_\partial$ is locally free.
%
\end{proof}

From now on we will denote by $\mD$ the local system $D$.

\begin{rmk}We also have that $j^*\mD$ is the largest local subsystem of $j^*R^1f_*\mC$ with stalk a subspace of $H^{1,0}(F_b)$ on the general fiber. Indeed, every other subsystem with the same property is contained in the kernel of $R^1f_*\mC_X\to R^1f_*\sO_X$ and so it is contained in $\mD$.
\end{rmk}

When $X$ is a surface, this means that $\mD$ is the local system which gives the second Fujita decomposition of $f_*\omega_{X/B}$. 
In the next section we present how to generalize this result when $\dim X>2$.

\section{Relation between $\mD$ and the sheaf of relatively closed differential forms}
We want to find the analogue of $\mD$ in the case of volume forms. For this we present an alternative description of $\mD$.
\label{sez2}
%
%
Consider the sequence (\ref{diffrel})
\begin{equation*}
0\to f^*\omega_B\to \Omega^1_X\to \Omega^1_{X/B}\to 0 
\end{equation*} and its wedge 
\begin{equation}
0\to f^*\omega_B\otimes \Omega^1_{X/B}\to \Omega^2_X\to \Omega^2_{X/B}\to 0. 
\end{equation}
Note that the map $f^*\omega_B\otimes \Omega^1_{X/B}\to \Omega^2_X$ is not injective in general and its kernel is a torsion sheaf. In the semistable case however, since we have seen that $\Omega^1_{X/B}$ is torsion free, the kernel must be trivial because $f^*\omega_B\otimes \Omega^1_{X/B}$ is torsion free.

More in general we have the exact sequences 
\begin{equation}
\label{seqwedge}
0\to f^*\omega_B\otimes \Omega^k_{X/B}\to \Omega^{k+1}_X\to \Omega^{k+1}_{X/B}\to 0.
\end{equation}
At the level of top forms on $X$ however, one does not  have the isomorphism
\begin{equation}
\label{noniso}
f^*\omega_B\otimes \Omega^{n-1}_{X/B}\cong \Omega^n_X
\end{equation} which holds only in the case of a smooth submersion.
On the other hand we have the exact sequence 
\begin{equation}
0\to f^*\omega_B\otimes \Omega^{n-1}_{X/B}\to  \Omega^n_X\to \Omega^n_{X{|Z}}\to 0
\end{equation}
where $Z$ is the subscheme of $X$ given by the critical points, and this is enough to deduce that we still have the injection
\begin{equation}
\label{inclusione}
f_*\Omega^{n-1}_{X/B}\hookrightarrow f_*\omega_{X/B}.
\end{equation}

\subsection{$\mD$ as a subsheaf of the sheaf of relatively closed differential forms}

We introduce now the sheaf of relatively closed differential forms. The de Rham differential $d$ on $\Omega^1_X$ induces a differential $d_{X/B}$ on the sheaf of relative differential forms.
In fact we have a commutative diagram 
\begin{equation}
\xymatrix{
0\ar[r]&f^*\omega_B\ar[r]\ar^{d}[d]&\Omega^1_X\ar[r]\ar^{d}[d]&\Omega^1_{X/B}\ar[r]&0\\
0\ar[r]&f^*\omega_B\otimes \Omega^1_{X/B}\ar[r]&\Omega^2_X\ar[r]&\Omega^2_{X/B}\ar[r]&0
}
\end{equation}
which can be completed 
\begin{equation}
\label{diagrammadeRham}
\xymatrix{
0\ar[r]&f^*\omega_B\ar[r]\ar^{d}[d]&\Omega^1_X\ar[r]\ar^{d}[d]&\Omega^1_{X/B}\ar[r]\ar^{d_{X/B}}[d]&0\\
0\ar[r]&f^*\omega_B\otimes \Omega^1_{X/B}\ar[r]&\Omega^2_X\ar[r]&\Omega^2_{X/B}\ar[r]&0
}
\end{equation}

\begin{defn}
The kernel of $d_{X/B}$ is by definition the sheaf of closed relative differential forms, denoted by $\Omega^1_{X/B,d_{X/B}}$.
\end{defn}

Diagram (\ref{diagrammadeRham}) can be completed with the exact sequence of the kernels

\begin{equation}
\label{diagrammadeRhamker}
\xymatrix{
0\ar[r]&C\ar[r]\ar@{^{(}->}[d]&\Omega^1_{X,d}\ar[r]\ar@{^{(}->}[d]&\Omega^1_{X/B,{d_{X/B}}}\ar[r]\ar@{^{(}->}[d]&0\\
0\ar[r]&f^*\omega_B\ar[r]\ar^{d}[d]&\Omega^1_X\ar[r]\ar^{d}[d]&\Omega^1_{X/B}\ar[r]\ar^{d_{X/B}}[d]&0\\
0\ar[r]&f^*\omega_B\otimes \Omega^1_{X/B}\ar[r]&\Omega^2_X\ar[r]&\Omega^2_{X/B}\ar[r]&0
}
\end{equation}

%
%

It is not difficult to realize that $C$ consists of holomorphic forms which can be locally expressed as $f^*(a(t)dt)$ with $a(t)$ an holomorphic function on $B$, therefore pushing forward the kernel sequence of Diagram (\ref{diagrammadeRhamker}) we obtain
\begin{equation}
\label{seq2}
0\to \omega_B\to f_*\Omega^1_{X,d}\to f_*\Omega^1_{X/B,d_{X/B}}\to R^1f_*C\to\dots
\end{equation}

\begin{prop}
The local system $\mD$ introduced in the previous section is the kernel of the map $f_*\Omega^1_{X/B,d_{X/B}}\to R^1f_*C$. In particular $\mD$ is the subsheaf of $f_*\Omega^1_{X/B,d_{X/B}}$ whose sections locally come from closed holomorphic 1-forms on $X$.
\end{prop}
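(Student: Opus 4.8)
The plan is to show that both descriptions exhibit $\mD$ as one and the same quotient $f_*\Omega^1_{X,d}/f^*\omega_B$, and then to read the claimed kernel off from (\ref{seq2}). Recall that by its definition in diagram (\ref{dia1}) the sheaf $\mD$ is the cokernel of the vertical monomorphism $\omega_B\hookrightarrow f_*\Omega^1_{X,d}$, so that
\[
0\to \omega_B\to f_*\Omega^1_{X,d}\to \mD\to 0 .
\]
On the other hand, (\ref{seq2}) is obtained by pushing forward the top (kernel) row $0\to C\to \Omega^1_{X,d}\to \Omega^1_{X/B,d_{X/B}}\to 0$ of diagram (\ref{diagrammadeRhamker}) and using the left exactness of $f_*$, which gives
\[
0\to f_*C\to f_*\Omega^1_{X,d}\xrightarrow{\ \phi\ } f_*\Omega^1_{X/B,d_{X/B}}\to R^1f_*C .
\]
Thus $\Ker\phi=f_*C$ and $\Ima\phi=\Ker\bigl(f_*\Omega^1_{X/B,d_{X/B}}\to R^1f_*C\bigr)$, so everything reduces to comparing the two subsheaves $\omega_B$ and $f_*C$ of $f_*\Omega^1_{X,d}$.

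The key computation is therefore the identification $f_*C\cong \omega_B$ together with the matching of the two inclusions into $f_*\Omega^1_{X,d}$. First I would record that, as observed before (\ref{seq2}), a local section of $C$ is a form $f^*(a(t)\,dt)$; since the fibres of $f$ are connected we have $f_*\sO_X=\sO_B$ and $f_*\mC_X=\mC_B$, so a section of $C$ over $f^{-1}(V)$ is globally of the shape $f^*\beta$ for a unique $\beta\in\omega_B(V)$. This yields a canonical isomorphism $f_*C\cong\omega_B$ under which the pushforward of $C\hookrightarrow\Omega^1_{X,d}$ becomes $\beta\mapsto f^*\beta$. I would then check that the vertical arrow $\omega_B\to f_*\Omega^1_{X,d}$ of (\ref{dia1}) is the very same map: it is characterized by commutativity of the square built on the de Rham differentials $\sO_B\to\omega_B$ and $f_*\sO_X\to f_*\Omega^1_{X,d}$, and since $d_X\circ f^*=f^*\circ d_B$, for $g\in\sO_B$ one gets $d_Bg\mapsto f^*(d_Bg)$; as every local section of $\omega_B$ is locally $d_Bg$, the arrow is again $\beta\mapsto f^*\beta$. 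Hence the two inclusions coincide.

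With this in hand the conclusion is a formal diagram chase. Indeed
\[
\mD=\mathrm{coker}\,(\omega_B\hookrightarrow f_*\Omega^1_{X,d})=\mathrm{coker}\,(f_*C\hookrightarrow f_*\Omega^1_{X,d})=\Ima\phi ,
\]
where the last equality uses $\Ker\phi=f_*C$, and $\Ima\phi=\Ker\bigl(f_*\Omega^1_{X/B,d_{X/B}}\to R^1f_*C\bigr)$ by exactness of (\ref{seq2}). This realizes $\mD$ as the asserted subsheaf of $f_*\Omega^1_{X/B,d_{X/B}}$ via $\phi$. For the final assertion I would note that $\phi=f_*q$, with $q\colon \Omega^1_{X,d}\to\Omega^1_{X/B,d_{X/B}}$ the relative restriction map, so any local section of $\mD=\Ima\phi$ over an open $A\subseteq B$ is the image under $q$ of a $d$-closed holomorphic $1$-form on $f^{-1}(A)$; that is, sections of $\mD$ locally come from closed holomorphic $1$-forms on $X$.

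The honest obstacle is entirely in the second paragraph: the matching of the two copies of $\omega_B$ inside $f_*\Omega^1_{X,d}$, i.e. the computation $f_*C\cong\omega_B$ and the identification of both inclusions with the pullback $f^*$. This rests on connectedness of the fibres and on the exactness of the top row of (\ref{diagrammadeRhamker}), namely the local liftability of relatively closed relative $1$-forms to $d$-closed $1$-forms on $X$, which is already granted by that diagram. Once these points are settled the proposition follows formally.
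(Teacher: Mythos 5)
Your proposal is correct and takes essentially the same route as the paper: the paper's one-line proof reads $\mD$ off as the cokernel of $\omega_B\to f_*\Omega^1_{X,d}$ and then invokes exactness of Sequence (\ref{seq2}), which is precisely your formal diagram chase. The only difference is that you spell out the identification $f_*C\cong\omega_B$ (via connectedness of the fibres) and the compatibility of the two inclusions into $f_*\Omega^1_{X,d}$, steps the paper leaves implicit when it writes down (\ref{seq2}).
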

\begin{proof}
Recall that $\mD$ by definition is the cokernel of $\omega_B\to f_*\Omega^1_{X,d}$, hence by Sequence (\ref{seq2}) it can be also seen as the kernel of $f_*\Omega^1_{X/B,d_{X/B}}\to R^1f_*C$.
\end{proof}

The reason why we give this equivalent definition of $\mD$ is because it will allow us to extend the constructions presented in the previous section to the case of top differential forms.

To visualize the relation among the sheaves $K_{\partial}$, $\mD$ and $f_*\Omega^1_{X/B,d_{X/B}}$, we have the diagram
\begin{center}
\begin{equation}
\xymatrix{
&K_{\partial}\ar@{^{(}->}[dr]&\\
\mD\ar@{^{(}->}[ur]\ar@{^{(}->}[dr]&&f_*\Omega^1_{X/B}\\
&f_*\Omega^1_{X/B,d_{X/B}}\ar@{^{(}->}[ur]&
}
\end{equation}
\end{center}

%

\subsection{$D^{n-1}$ or the sheaf of fiberwise volume forms liftable to closed holomorphic forms}

Now take the analogous of Diagram (\ref{diagrammadeRhamker}) for $(n-1)$-forms

\begin{equation}
\label{diagrammadeRhamkertop}
\xymatrix{
0\ar[r]&C^{n-1}\ar[r]\ar@{^{(}->}[d]&\Omega^{n-1}_{X,d}\ar[r]\ar@{^{(}->}[d]&\Omega^{n-1}_{X/B,{d_{X/B}}}\ar[r]\ar@{^{(}->}[d]&0\\
0\ar[r]&f^*\omega_B\otimes\Omega^{n-2}_{X/B} \ar[r]\ar^{d}[d]&\Omega^{n-1}_X\ar[r]\ar^{d}[d]&\Omega^{n-1}_{X/B}\ar[r]\ar^{d_{X/B}}[d]&0\\
0\ar[r]&f^*\omega_B\otimes \Omega^{n-1}_{X/B}\ar[r]&\Omega^n_X\ar[r]&\Omega^n_{X}|_Z\ar[r]&0.
}
\end{equation}
and take the pushforward of the kernel sequence.

As in the previous case a local computation shows that $f_*C^{n-1}=\omega_B\otimes f_*\Omega_{X/B}^{n-2}$ and we have the long exact sequence
\begin{equation}
0\to \omega_B\otimes f_*\Omega_{X/B}^{n-2}\to f_*\Omega_{X,d}^{n-1}\to f_*\Omega^{n-1}_{X/B,d_{X/B}}\to R^1f_*C^{n-1}\to\dots
\end{equation}
\begin{defn}
We denote by $D^{n-1}$ the coker of $\omega_B\otimes f_*\Omega_{X/B}^{n-2}\to f_*\Omega_{X,d}^{n-1}$. Alternatively it is the kernel of $f_*\Omega^{n-1}_{X/B,d_{X/B}}\to R^1f_*C^{n-1}$.
\end{defn}

Similarly to $\mD$, $D^{n-1}$ can be interpreted as a sheaf of forms on the fibers liftable to closed holomorphic forms on  $X$.

We will show that $D^{n-1}$ is a local system, but first note that we have the map of long exact sequences 
\begin{equation}
\label{diag}
\xymatrix{
0\ar[r] &\omega_B\otimes f_*\Omega_{X/B}^{n-2}\ar[r]\ar@{=}[d]& f_*\Omega_{X,d}^{n-1}\ar[r]\ar@{^{(}->}[d]& f_*\Omega^{n-1}_{X/B,d_{X/B}}\ar[r]\ar@{^{(}->}[d]& R^1f_*C^{n-1}\ar[r]\ar[d]&\\
0\ar[r] &\omega_B\otimes f_*\Omega_{X/B}^{n-2}\ar[r]& f_*\Omega_{X}^{n-1}\ar[r]& f_*\Omega^{n-1}_{X/B}\ar[r]& \omega_B\otimes R^1f_*\Omega_{X/B}^{n-2}\ar[r]&\\
}
\end{equation}
On $B^0$ the inclusion $f_*\Omega^{n-1}_{X/B,d_{X/B}}\hookrightarrow f_*\Omega^{n-1}_{X/B}$ is an isomorphism and the arrow $ f_*\Omega^{n-1}_{X/B}\to \omega_B\otimes R^1f_*\Omega_{X/B}^{n-2}$ is the variation of Hodge structure
\begin{equation}
\overline{\nabla}^{n-1,0}\colon \sH^{n-1,0}\to \sH^{n-2,1}\otimes \omega_B,
\label{variationn}
\end{equation} see also Remark (\ref{remark}).

Denoting as usual $\sF^p\subset \sH^{n-1}=j^*R^{n-1}f_*\mC_X\otimes \sO_{B^0}$ the Hodge filtration, the Diagram \cite[Page 251]{Vo1} becomes
\begin{equation}
\label{hodge}
\xymatrix{
\nabla\colon&0\ar[r]\ar[d]&\sF^{n-1}\otimes \omega_B\ar[d]\\
\nabla\colon&\sF^{n-1}\ar[r]\ar@{=}[d]&\sF^{n-2}\otimes \omega_B\ar[d]\\
\overline{\nabla}^{n-1,0}\colon&\sH^{n-1,0}\ar[r]& \sH^{n-2,1}\otimes \omega_B
}
\end{equation}
\begin{lem}
\label{lem1}
The sheaf $j^*D^{n-1}$ is the largest local system on $B^0$ contained in $f_*\Omega^{n-1}_{X/B}$.
\end{lem}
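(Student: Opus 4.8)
The plan is to work entirely over $B^0$, where $f^0$ is smooth, and to translate the statement into the language of the weight-$(n-1)$ variation of Hodge structure carried by the local system $j^*R^{n-1}f_*\mC_X$. Recall from Remark \ref{remark} and the discussion following Diagram (\ref{diag}) that on $B^0$ one has $f_*\Omega^{n-1}_{X/B}\cong \sH^{n-1,0}=\sF^{n-1}$, that the inclusion $f_*\Omega^{n-1}_{X/B,d_{X/B}}\hookrightarrow f_*\Omega^{n-1}_{X/B}$ is an isomorphism, and that the bottom connecting map of Diagram (\ref{diag}) is the variation of Hodge structure $\overline\nabla^{n-1,0}$ of (\ref{variationn}). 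So there are two things to prove: that $j^*D^{n-1}$ is a local system sitting inside $\sF^{n-1}$, and that every local subsystem of $\sF^{n-1}$ is contained in it. The containment $j^*D^{n-1}\subseteq f_*\Omega^{n-1}_{X/B}$ is immediate, since the vertical arrows of Diagram (\ref{diag}) are injective and restrict to isomorphisms on $B^0$.

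For maximality, I would start from an arbitrary local subsystem $\mathbb{M}\hookrightarrow j^*R^{n-1}f_*\mC_X$ whose associated subbundle satisfies $\mathbb{M}\otimes\sO_{B^0}\subseteq\sF^{n-1}$. Since $\mathbb{M}$ is flat, the Gauss--Manin connection $\nabla$ preserves $\mathbb{M}\otimes\sO_{B^0}$, so $\nabla(\mathbb{M}\otimes\sO_{B^0})\subseteq\sF^{n-1}\otimes\omega_B$; passing to the quotient $\sF^{n-2}/\sF^{n-1}\cong\sH^{n-2,1}$ as in Diagram (\ref{hodge}) shows that $\overline\nabla^{n-1,0}$ vanishes on $\mathbb{M}$. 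The vanishing of $\overline\nabla^{n-1,0}$ on $\mathbb{M}$ is, via the comparison of connecting maps in Diagram (\ref{diag}), precisely what is needed to lift its sections to closed holomorphic forms: the image of a section of $\mathbb{M}$ under $f_*\Omega^{n-1}_{X/B,d_{X/B}}\to R^1f_*C^{n-1}$ is sent by the vertical arrow of (\ref{diag}) to $\overline\nabla^{n-1,0}$ of that section, which is $0$, and hence the section lands in the kernel $j^*D^{n-1}$. Thus $\mathbb{M}\subseteq j^*D^{n-1}$, the justification of the implication being the delicate point discussed below.

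It remains to see that $j^*D^{n-1}$ is itself a local system contained in $\sF^{n-1}$, which then forces it to be the largest one. Here the argument parallels Lemma \ref{loc}: a section of $D^{n-1}$ lifts, by construction, to a closed holomorphic $(n-1)$-form on an open preimage in $X$, and restricting such a global closed form to the fibers produces a family of de Rham classes that is horizontal for $\nabla$ and of Hodge type $(n-1,0)$. Therefore $j^*D^{n-1}$ is identified with a $\nabla$-flat subsheaf of $j^*R^{n-1}f_*\mC_X$ whose fibers lie in $\sF^{n-1}$, that is, a local subsystem of $\sF^{n-1}$; combined with the maximality just shown, $j^*D^{n-1}$ is exactly the largest local system contained in $f_*\Omega^{n-1}_{X/B}$ over $B^0$.

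I expect the delicate point to be the implication used in the maximality step, namely that a horizontal section of $\sF^{n-1}$ lifts not merely to a relatively closed form but to a genuine $d$-closed holomorphic form on the total space; equivalently, that the comparison map $R^1f_*C^{n-1}\to\omega_B\otimes R^1f_*\Omega^{n-2}_{X/B}$ is injective on $B^0$. I would establish this by analysing the kernel sequence of Diagram (\ref{diagrammadeRhamkertop}) over a contractible $A\subseteq B^0$, on which $f^{-1}(A)$ retracts onto a fiber, and by identifying $F^{n-1}$ of $H^{n-1}(f^{-1}(A),\mC)$ with the space of closed holomorphic $(n-1)$-forms through the stupid filtration of the holomorphic de Rham complex; the vanishing of $\overline\nabla^{n-1,0}$ on a horizontal section then places its class in this $F^{n-1}$, producing the desired closed holomorphic lift. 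Everything else reduces to chasing the commutative Diagrams (\ref{diag}) and (\ref{hodge}) together with Griffiths transversality.
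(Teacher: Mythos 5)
Your reduction of the lemma to statements about the variation of Hodge structure is the same as the paper's, and two of your three steps are sound: the containment $j^*D^{n-1}\subseteq f_*\Omega^{n-1}_{X/B}$, and the fact that $j^*D^{n-1}$ is itself a local system inside $\sF^{n-1}$ (closed holomorphic lifts restrict to $\nabla$-flat families of classes). The genuine gap is in the maximality step, exactly at the point you flagged as delicate, and neither of your proposed repairs works. From flatness of $\mathbb{M}$ you retain only the consequence $\overline{\nabla}^{n-1,0}|_{\mathbb{M}}=0$ and then chase Diagram (\ref{diag}); this chase needs the map $R^1f_*C^{n-1}\to\omega_B\otimes R^1f_*\Omega^{n-2}_{X/B}$ to be injective on the image of the connecting homomorphism, and by commutativity of (\ref{diag}) that injectivity is equivalent to $\ker\overline{\nabla}^{n-1,0}=D^{n-1}$, which fails whenever $D^{n-1}\neq 0$, i.e.\ in every case where the lemma has content. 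Indeed $\ker\overline{\nabla}^{n-1,0}$ is the image of $f_*\Omega^{n-1}_X\to f_*\Omega^{n-1}_{X/B}$ (fiberwise forms liftable to \emph{holomorphic} forms) and is an $\sO_{B^0}$-submodule of $\sH^{n-1,0}$, whereas $D^{n-1}$ (forms liftable to \emph{closed} holomorphic forms) is a finite-rank $\mC$-local system; a nonzero local system is never stable under multiplication by non-constant functions, so $\sO_{B^0}\cdot D^{n-1}$ lies in $\ker\overline{\nabla}^{n-1,0}$ but not in $D^{n-1}$. One can also compute the failure directly: on $B^0$ one has $C^{n-1}\cong f^*\omega_B\otimes\Omega^{n-2}_{X/B,d_{X/B}}$, and the relative Poincar\'e lemma together with the fact that $d$ annihilates global holomorphic forms on the compact K\"ahler fibers identifies the kernel of your comparison map with $\omega_B\otimes\sH^{n-1,0}$, which is nonzero as soon as $p_g(F)>0$. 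The distinction you are collapsing (holomorphic lift versus closed holomorphic lift) is precisely the one the paper draws in the remark following the lemma; for $n=2$ it is the difference between $K_\partial$ and $\mD$. Your fallback via the stupid filtration on $H^{n-1}(f^{-1}(A),\mC)$ is circular: $f^{-1}(A)$ is non-compact, there is no Hodge--de Rham degeneration available, and the assertion that a flat class of fiberwise type $(n-1,0)$ is represented by a closed holomorphic form on $f^{-1}(A)$ is exactly the statement to be proved.

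What is missing is the Cartan--Lie formula, which is how the paper closes this step. Given a flat section $\omega$ of $\sF^{n-1}$, take the holomorphic (not yet closed) lift $\Omega$ provided by $\overline{\nabla}^{n-1,0}\omega=0$; then the full flatness gives $0=\nabla_u(\omega)(t)=[\mathrm{int}_v(d\Omega|_{X_t})]$ for every $t$ and every tangent vector $u$. Since $\mathrm{int}_v(d\Omega)|_{X_t}$ is a holomorphic top form on the compact K\"ahler fiber $X_t$, the vanishing of its cohomology class forces the form itself to vanish; letting $t$ vary gives $d\Omega=0$, hence $\omega\in D^{n-1}$. Note that this argument uses the equation $\nabla\omega=0$ itself, not merely its corollary $\overline{\nabla}^{n-1,0}\omega=0$, which is the only piece of information your diagram chase retains.
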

\begin{proof}
For better clarity we denote by $\nabla$ the flat connection $\nabla\colon\sH^{n-1}\to \sH^{n-1}\otimes \omega_B$ and by $\nabla|_{\sF^{n-1}}$ its restriction on $\sF^{n-1}$, that is the middle map of Diagram (\ref{hodge}).

We will prove our thesis showing that $D^{n-1}$ is the kernel of $\nabla|_{\sF^{n-1}}$. In fact 
$$
\ker \nabla|_{\sF^{n-1}}\subset \ker \nabla=R^{n-1}f_*\mC_X
$$ 
therefore $\ker \nabla|_{\sF^{n-1}}$ is a local system contained in $\sF^{n-1}$. It is also the largest with this property because any other local system contained in $\sF^{n-1}$ must also be contained in $R^{n-1}f_*\mC_X$ and hence it must be contained in $\ker \nabla|_{\sF^{n-1}}$.

Recall also the Cartan-Lie formula which allows to compute the connection $\nabla$; see \cite[Section 9.2.2]{Vo1}.
Take a suitable open subset $U\subset B^0$. If $\Omega$ is a differential form of degree $n-1$ on $f^{-1}(U)$ such that its restriction to $F_b$ is closed for all $b\in U$, then the map $b \to [\Omega|_{F_b}] \in H^{n-1}(F_b,\mC)$ is a section of the bundle $\sH^{n-1}$ which we shall denote by $\omega$.
Under these assumptions, if $u\in T_{B,t}$ is a tangent vector and $v\in \Gamma(T_{X|_{X_t}})$ is such that $f_*v=u$, then
\begin{equation}
\nabla_u(\omega)(t)=[\text{int}_v (d\Omega|_{X_t})].
\label{cartan}
\end{equation}

Now take a section $\omega$ of $D^{n-1}$. With a suitable choice of $U$, we have that $\Omega$ is closed since $D^{n-1}$ is the image of $f_*\Omega_{X,d}^{n-1}\to f_*\Omega^{n-1}_{X/B,d_{X/B}}$.
By (\ref{cartan}) it follows that $\nabla(\omega)=0$ since $d\Omega=0$, that is $D^{n-1}$ is contained in $\ker \nabla|_{\sF^{n-1}}$. 

Viceversa if $\omega$ is in the kernel of $\nabla|_{\sF^{n-1}}$, it is also in the kernel of $\overline{\nabla}^{n-1,0}$ by Diagram (\ref{hodge}). This kernel is the image of $f_*\Omega_{X}^{n-1}\to f_*\Omega^{n-1}_{X/B}$ by the second row of (\ref{diag}), that is we can choose $\Omega$ to be a holomorphic form. In particular $d\Omega$ is a holomorphic $n$-form on $f^{-1}(U)$. By the hypothesis $\omega\in \ker  \nabla|_{\sF^{n-1}}$ and by the Cartan-Lie formula we have that for every $t\in U$ and for every vector $u\in T_{B,t}$
$$
[\text{int}_v (d\Omega|_{X_t})]=0
$$ from which we deduce that $d\Omega|_{X_t}=0\in H^0(X_t, \Omega^{n}_{X|_{X_t}})$. Since this holds for all $t$ in $U$, $d\Omega$ must be zero, i.e. $\Omega$ is closed and we have proved that $\omega$ is in the image of $f_*\Omega^{n-1}_{X,d}$ which is $D^{n-1}$.

\end{proof}

\begin{rmk}
We have seen that the Cartan-Lie formula basically tells us that the kernel of $\nabla|_{\sF^{n-1}}$ is given by the section that are obtained as restriction on the fibers of a closed holomorphic form on $X$.

In \cite{Vo1}, also the map $\overline{\nabla}^{n-1,0}$ is explicitly computed using the Cartan-Lie formula and it turns out to be given by
$$
[\text{int}_v (\bar{\partial}\Omega|_{X_t})]
$$ that is the sections in $\ker \overline{\nabla}^{n-1,0}$ are obtained as restriction on the fibers of a holomorphic form on $X$, which is also evident by the second row of Diagram (\ref{diag}).
\end{rmk}

\begin{lem}
\label{lem2}
The local system $j^*D^{n-1}$ trivially extends on $B$, hence $D^{n-1}$ is a local system on $B$ which we will denote by $\mD^{n-1}$.
\end{lem}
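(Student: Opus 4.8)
The plan is to mirror the argument used for $\mD$ in Lemma \ref{loc}, now upgraded by the description of $D^{n-1}$ obtained in Lemma \ref{lem1}. By that lemma the sheaf $j^*D^{n-1}$ is a local system on $B^0$ sitting inside the weight $n-1$ variation $j^*R^{n-1}f_*\mC_X$, with stalk at a general $b$ contained in $\sF^{n-1}=\sH^{n-1,0}=H^0(F_b,\omega_{F_b})$. Being a flat subbundle, the local monodromy $T$ around any point $P_i\in B_0$ maps this stalk to itself, so the whole extension problem reduces to showing that each such $T$ acts trivially.

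First I would establish triviality of the local monodromies. The monodromy preserves the polarization $Q$ of the Hodge structure, and by the Hodge--Riemann bilinear relations the Hermitian form attached to $Q$ is definite on the primitive cohomology $P^{n-1}$. Since $H^{n-1,0}(F_b)\subset P^{n-1}$ --- the Hodge type $(n-1,0)$ can occur in the Lefschetz decomposition $H^{n-1}=\bigoplus_k L^kP^{n-1-2k}$ only in the summand $k=0$ --- this form is positive definite on the stalk of $j^*D^{n-1}$, so $T$ restricted there is unitary. On the other hand, $f$ being semistable, the Monodromy Theorem (see \cite[Theorem 11.8]{PS}) gives that $T$ is unipotent on $R^{n-1}f_*\mC_X$, and a fortiori on the sub-local system. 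A unipotent unitary automorphism is the identity, hence $T$ is trivial. This is precisely the step generalizing the surface case: for $n=2$ all of $H^1$ is primitive and positivity is automatic, whereas for $n>2$ it is primitivity of the top Hodge piece $H^{n-1,0}$ that makes the argument go through. Triviality of all the local monodromies means $j^*D^{n-1}$ extends as a local system across $B_0$, namely as the trivial extension $j_*j^*D^{n-1}$.

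It then remains to identify $D^{n-1}$ with $j_*j^*D^{n-1}$ via the adjunction map, which is an isomorphism over $B^0$. Injectivity is immediate: by Diagram (\ref{diag}) the sheaf $D^{n-1}$ embeds into $f_*\Omega^{n-1}_{X/B}$, which is locally free on the curve $B$ (a torsion-free subsheaf of the Fujita sheaf $f_*\omega_{X/B}$ through (\ref{inclusione})); hence $D^{n-1}$ is torsion free and the kernel of the adjunction map, being supported on $B_0$, vanishes. For surjectivity I would argue as in Lemma \ref{loc}: a local section of $j_*j^*D^{n-1}$ near $P_i$ is a single-valued flat section over the punctured disk, which by the local invariant cycle theorem (\cite[Theorem 5.3.4]{CEZGT}) is the restriction of a class on the total space of the degeneration; the Cartan--Lie description of $D^{n-1}$ from Lemma \ref{lem1} then lets one realize this class by a genuine closed holomorphic $(n-1)$-form, i.e. by a section of $D^{n-1}$.

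The step I expect to be the main obstacle is exactly this last identification, the surjectivity near the singular fibers. Unlike the case of $\mD$, the sheaf $D^{n-1}$ is \emph{not} the kernel of the projection $R^{n-1}f_*\mC_X\to R^{n-1}f_*\sO_X$ (that kernel is the strictly larger subspace $F^1H^{n-1}=\bigoplus_{p\geq1}H^{p,\,n-1-p}$), so one cannot read off the identification from a single sheaf-theoretic kernel and must instead combine the local invariant cycle theorem with the holomorphic-representative description supplied by Lemma \ref{lem1}. Once this is carried out, $D^{n-1}=j_*j^*D^{n-1}$ is a local system on $B$, which we denote by $\mD^{n-1}$.
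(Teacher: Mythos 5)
Your proposal is correct and takes essentially the same route as the paper: positivity of the intersection form on the stalk (contained in $H^{n-1,0}$) makes the local monodromies unitary, unipotency from the Monodromy Theorem then forces them to be trivial, and the identification $D^{n-1}\cong j_*j^*D^{n-1}$ comes from the local invariant cycle theorem together with torsion-freeness of $D^{n-1}$. The paper's own proof is in fact terser --- it states that the form is ``up to constant, strictly positive definite'' and that the isomorphism is ``easily deduced'' from the surjectivity of $R^{n-1}f_*\mC_X\to j_*j^*R^{n-1}f_*\mC_X$ --- so your extra justifications (primitivity of $H^{n-1,0}$ via the Lefschetz decomposition, and the extra care required because $D^{n-1}$ is not the kernel of $R^{n-1}f_*\mC_X\to R^{n-1}f_*\sO_X$) only make explicit what the paper leaves implicit.
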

\begin{proof}
The proof is essentially the same as for $\mD$.
The intersection form on $j^*D^{n-1}$ is, up to constant, strictly positive definite,  hence the
monodromy representation associated to $j^*D^{n-1}$ is unitary flat and unipotent by the
Monodromy Theorem, hence it is trivial. This proves that $j_*j^*D^{n-1}$ is a local system.
Exactly as we have seen before, $ R^{n-1}f_*\mC\to j_*j^*R^{n-1}f_*\mC$ is surjective and we easily deduce the isomorphism $j_*j^*D^{n-1}\cong D^{n-1}$.
\end{proof}

By Lemma (\ref{lem1}) and Lemma (\ref{lem2}) we get that $\mD^{n-1}$ is the largest local system contained in $f_*\Omega^{n-1}_{X/B}$. Recall that we have the inclusion  $f_*\Omega^{n-1}_{X/B}\hookrightarrow \omega_{X/B}$ (see (\ref{inclusione})) which is an isomorphism on $B^0$. It immediately follows that 
\begin{thm}
\label{secfujita}
$\mD^{n-1}$ is the local system that gives the second Fujita decomposition of $\omega_{X/B}$, that is 
\begin{equation}
\label{fujitaii}
\omega_{X/B}=\sU\oplus \sA
\end{equation} with $\sA$ ample and $\sU=\mD^{n-1}\otimes \sO_B$.
\end{thm}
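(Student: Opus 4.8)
The plan is to assemble the theorem from the three preparatory results already established, namely Lemmas \ref{lem1} and \ref{lem2} together with the Fujita decomposition recalled in the introduction. Concretely, I would argue that the problem splits into two independent assertions: first, the identification of $\mD^{n-1}\otimes\sO_B$ with the unitary flat summand $\sU$ of the second Fujita decomposition; and second, the verification that the complementary summand $\sA$ is ample. The whole strategy rests on the observation, made in \eqref{inclusione}, that $f_*\Omega^{n-1}_{X/B}\hookrightarrow \omega_{X/B}$ is an inclusion which is an isomorphism over the regular locus $B^0$.

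First I would record that by Lemmas \ref{lem1} and \ref{lem2} the sheaf $\mD^{n-1}$ is a genuine local system on all of $B$ and, being the trivial extension of $j^*D^{n-1}$, it is the \emph{largest} local system contained in $f_*\Omega^{n-1}_{X/B}$. Tensoring a local system by $\sO_B$ produces a flat vector bundle, and since the intersection form is strictly positive definite on the stalks (as already exploited in the proof of Lemma \ref{lem2}), the associated representation is unitary; hence $\mD^{n-1}\otimes\sO_B$ is a unitary flat bundle. The key step is then to match this with Fujita's summand $\sU$. By the Fujita decomposition $\omega_{X/B}\cong\sU\oplus\sA$ with $\sU$ unitary flat and $\sA$ ample. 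A unitary flat bundle is itself a local system sitting inside $\omega_{X/B}$, and because the inclusion $f_*\Omega^{n-1}_{X/B}\hookrightarrow\omega_{X/B}$ is an isomorphism on $B^0$, restricting $\sU$ to $B^0$ gives a local subsystem of $j^*R^{n-1}f_*\mC_X$ contained in $\sF^{n-1}=j^*f_*\Omega^{n-1}_{X/B}$. By the maximality in Lemma \ref{lem1} we get $\sU|_{B^0}\subseteq j^*\mD^{n-1}$, and the reverse inclusion follows because $\mD^{n-1}\otimes\sO_B$ is unitary flat and any such subbundle of $\omega_{X/B}$ must, by the flatness/ampleness dichotomy, land in $\sU$. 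This forces $\sU=\mD^{n-1}\otimes\sO_B$.

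Having identified $\sU$, the complementary statement is essentially automatic: $\sA$ is ample simply because it is the ample summand supplied by Fujita's theorem, and once $\sU=\mD^{n-1}\otimes\sO_B$ is pinned down, the decomposition $\omega_{X/B}=\sU\oplus\sA$ is precisely the asserted \eqref{fujitaii}. I would phrase this last part so as to make clear that no new ampleness argument is needed; the content of the theorem is the \emph{identification} of the flat part with the explicit local system $\mD^{n-1}$, not the existence of the splitting, which is Fujita's.

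The step I expect to be the main obstacle is the clean matching $\sU|_{B^0}=j^*\mD^{n-1}$, and in particular justifying the reverse inclusion rigorously. The subtlety is that maximality in Lemma \ref{lem1} immediately gives $\sU|_{B^0}\subseteq j^*\mD^{n-1}$, but to conclude equality one must argue that $\mD^{n-1}\otimes\sO_B$, being unitary flat of the \emph{same} rank, cannot be a proper subbundle of $\sU$ — equivalently, that the quotient local system would have to be a unitary flat bundle with no room inside $\omega_{X/B}$ beyond $\sU$. This is where one uses that $\sA$ is ample and contains no flat (degree-zero) subsheaf, so that every flat subbundle of $\omega_{X/B}$ is contained in $\sU$; applying this to $\mD^{n-1}\otimes\sO_B$ yields the reverse containment and hence the ranks agree. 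Making this comparison of flat subbundles watertight, rather than merely comparing ranks on $B^0$, is the delicate point, though the positivity of the intersection form and the ampleness of $\sA$ together resolve it.
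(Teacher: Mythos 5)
Your first inclusion is sound and is exactly what the paper does: $j^*\sU$ corresponds to a local subsystem of $j^*R^{n-1}f_*\mC_X$ with fibers in $H^{n-1,0}$ (recalled in the introduction from \cite{CD1}), so the maximality in Lemma \ref{lem1}, transported through the isomorphism $f_*\Omega^{n-1}_{X/B}\cong f_*\omega_{X/B}$ on $B^0$, gives $\sU\subseteq\mD^{n-1}\otimes\sO_B$. The genuine gap is in your reverse inclusion: the ``flatness/ampleness dichotomy'' you invoke does not exist. An ample vector bundle on a curve can perfectly well contain flat, degree-zero subsheaves and even subbundles: $\sO_{\mP^1}\subset\sO_{\mP^1}(1)$ is a flat subsheaf of an ample line bundle, and the Euler sequence $0\to\sO_{\mP^1}\to\sO_{\mP^1}(1)^{\oplus 2}\to\sO_{\mP^1}(2)\to 0$ exhibits a trivial (hence unitary flat) \emph{subbundle}, with locally free quotient, of an ample bundle. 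For the same reason, ``every flat subbundle of $\sU\oplus\sA$ lies in $\sU$'' is false for abstract bundles: since $\Hom(\sO_B,\sA)=H^0(B,\sA)$ need not vanish, a nowhere-vanishing pair of sections of $\sA$ produces a trivial flat subbundle projecting nontrivially to $\sA$. Unitarity of $\mD^{n-1}$ does not rescue this: unitary flat bundles can map nontrivially to, and embed in, ample bundles. So no purely bundle-theoretic argument can force the composite $\mD^{n-1}\otimes\sO_B\hookrightarrow f_*\omega_{X/B}\to\sA$ to vanish.

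What actually closes the argument---and what the paper's terse ``it immediately follows'' is appealing to---is a second, Hodge-theoretic maximality statement, this time for $\sU$: by Fujita \cite{Fu2} and Catanese--Dettweiler \cite{CD1}, $\sU$ is the trivial extension of $\mU^0\otimes\sO_{B^0}$, where $\mU^0$ is the \emph{maximal} local subsystem of $R^{n-1}f^0_*\mC_X$ whose fibers lie in $H^{n-1,0}$. Since $j^*\mD^{n-1}=\ker\nabla|_{\sF^{n-1}}$ is, by Lemma \ref{lem1} and its proof, precisely such a local subsystem, it is contained in $\mU^0$, whence $\mD^{n-1}\otimes\sO_B\subseteq\sU$; together with your first inclusion this gives equality. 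In other words, the theorem is a ``maximal equals maximal'' comparison: Lemma \ref{lem1} supplies the maximality of $\mD^{n-1}$, and Fujita--Catanese--Dettweiler theory supplies the corresponding maximality of $\sU$, with the ampleness of $\sA$ entering only as part of the statement of the decomposition, never as a tool. Your proposal is missing exactly this second input, and the false dichotomy you substitute for it cannot be repaired without returning to the variation of Hodge structure.
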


\section{Massey products of sections of $\mD$}
\label{sezioneaggiunta}
We stress that the dimension of the fiber is $n-1$ and we now construct a map that allows us to define the \emph{Massey product or adjoint form} of $n$ sections of $K_\partial$.

\subsection{Definition of Massey product of $1$-forms}
Consider the map 
\begin{equation}
\bigwedge^nf_*\Omega^1_X\to f_*\bigwedge^n\Omega^1_X=f_*\omega_X
\end{equation}
and take the tensor product with $\omega_B^\vee=T_B$
\begin{equation}
\bigwedge^nf_*\Omega^1_X\otimes T_B\to f_*\omega_X\otimes T_B=f_*\omega_{X/B}. 
\end{equation}
Since sequence (\ref{seqK}) splits by Lemma (\ref{split}), the following wedge sequence also splits
\begin{equation}
\label{split2}
\xymatrix{
0\ar[r]& \bigwedge^{n-1} K_\partial\otimes \omega_B\ar[r]&\bigwedge^n f_*\Omega^1_X\ar[r]& \bigwedge^n K_{\partial}\ar[r]\ar @/_1.6pc/ [l]_{} & 0
}
\end{equation}
and it allows to define the composite map
\begin{equation}
\label{agg}
\bigwedge^n K_{\partial}\otimes T_B\to \bigwedge^n f_*\Omega^1_X\otimes T_B\to f_*\omega_{X/B} 
\end{equation}

Given $n$ sections $\eta_1,\ldots,\eta_n$ in $\Gamma(A,K_{\partial})$ on a suitable open subset $A\subset B$ consider the wedge products $\eta_1\wedge\dots\wedge\widehat{\eta_i}\wedge \dots\wedge\eta_n$ for $i=1,\ldots,n$ and their image via the map 
\begin{equation}
\label{wedge}
\bigwedge^{n-1} K_{\partial}\to \bigwedge^{n-1} f_*\Omega^1_{X/B}\to f_*\Omega^{n-1}_{X/B}\hookrightarrow f_*\omega_{X/B}.
\end{equation}

\begin{defn}
\label{omegai}
We call $\omega_i$, $i=1,\ldots,n$, the image of $\eta_1\wedge\dots\wedge\widehat{\eta_i}\wedge \dots\wedge\eta_n$ via map (\ref{wedge}) and $\sW$ the $\sO_B$-submodule of $\omega_{X/B}$ generated by the $\omega_i$.
\end{defn}
\begin{defn}
\label{mtrivial}
The \emph{Massey product or adjoint image} of $\eta_1,\ldots,\eta_n$ is the section of $f_*\omega_{X/B}$ computed via the morphism (\ref{agg}) starting with $\eta_1\wedge...\wedge\eta_n$. We say that the sections $\eta_1,\ldots,\eta_n$  are \emph{Massey trivial} if their Massey product is contained in the $\sO_B$-submodule $\sW$.
\end{defn}

Alternatively we can define Massey products pointwise: given a regular value $b\in B$, we can take the 1-forms defined by $\eta_1,\dots,\eta_n$ on the fiber $F_b$ and build their adjoint form in the usual way, which we briefly recall. For an extensive discussion on the topic, we refer to \cite{RZ1}.

Call $W<H^0(F_b,\Omega^1_{F_b})$ the $n$-dimensional subspace generated by these sections and take $s_1,\dots,s_n\in H^0(F_b,\Omega^1_{X|F_b})$ a choice of liftings on $X$. These lifting can be found since $W\subset \ker \delta_{\xi_b}$ by (\ref{kerxi}).

We have a top form $\Omega\in H^{0}(F_b,\omega_{X|F_b})$ from the 
element $s_{1}\wedge\ldots\wedge s_{n}\in 
\bigwedge^{n+1}H^{0}(F_b,\Omega^1_{X|F_b})$. Since $\omega_{X|F_b}\cong \omega_{F_b}$ we 
actually obtain from $\Omega$ a top form $\omega$ of $\omega_{F_b}$. Such an $\omega\in 
H^{0}(X,\omega_{F_b})$ is the classical \emph{adjoint form} or \emph{Massey product}. When needed we will denote it by $m_{\xi_b}(\eta_1,\ldots,\eta_n)$.

 All the pointwise defined adjoint forms can be glued together to an element of $f_*\omega_{X/B}$ modulo the $\sO_B$-submodule $\sW$. This construction agrees with the previous one of Definition \ref{mtrivial} on suitable open subsets $A\subset B$.


\subsubsection{$\mD$ and Massey product}
Of course since $\mD$ is a subsheaf of $K_{\partial}$, it makes sense to construct Massey products starting from sections of $\mD$, i.e. consider the map 
\begin{equation}
\bigwedge^n \mD\otimes T_B\to f_*\omega_{X/B}
\end{equation} instead of (\ref{agg}).

On the other hand note that the map (\ref{wedge}) restricted on $\mD$ has image in $\mD^{n-1}$, that is
\begin{equation}
\bigwedge^{n-1}\mD\to \mD^{n-1}
\end{equation} because the wedge product of forms which can be lifted to closed holomorphic forms can also be lifted to a closed holomorphic form.

\subsection{Massey triviality and strictness}
Let $A\subset B$ a contractible open subset and $W\subset \Gamma(A, K_{\partial})$ a vector subspace of dimension at least $n$.

We give the following definition
\begin{defn}
\label{mastriv}
We say that $W$ is Massey trivial if any $n$-uple of sections in $W$ is Massey trivial (see Definition \ref{mtrivial}).
\end{defn}

\begin{defn}
\label{strict}
We say that $W$ is strict if the map 
$$
\bigwedge^{n-1}W\otimes \sO_A\to f_*\omega_{X/B_{|A}}
$$ is an injection of vector bundles.
\end{defn}

\begin{prop}
\label{wedge0}
Let $A\subset B$ be a contractible open subset and $W\subset \Gamma(A, K_{\partial})$ a Massey trivial strict subspace. Then there exists a unique $\widetilde{W}\subset \Gamma(A,f_*\Omega^1_S)$ lifting $W$ such that the map
$$
\psi\colon \bigwedge^{n}\widetilde{W}\otimes T_A\to f_*\omega_{X/B_{|A}}
$$ is zero.
\end{prop}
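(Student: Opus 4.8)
The plan is to use the splitting of sequence (\ref{seqK}) provided by Lemma \ref{split}. I fix the corresponding $\sO_B$-linear section $\sigma\colon K_\partial\to f_*\Omega^1_X$ and take it as a reference lift. Since the kernel of the projection $f_*\Omega^1_X\to K_\partial$ is $\omega_B$, every lifting $\widetilde{W}\subset\Gamma(A,f_*\Omega^1_X)$ of $W$ is the image of a $\mathbb{C}$-linear section $s=\sigma+\ell$, where $\ell\colon W\to\Gamma(A,\omega_B)$ is $\mathbb{C}$-linear, and conversely every such $\ell$ gives a lifting. As $A$ is contractible I trivialize $\omega_{B|A}=\sO_B\cdot\theta$ and $T_A=\sO_B\cdot\partial_t$ with $\langle\theta,\partial_t\rangle=1$, and write $\ell(\eta)=c(\eta)\,\theta$ for a $\mathbb{C}$-linear map $c\colon W\to\Gamma(A,\sO_B)$. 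Thus the liftings of $W$ are parametrized by such $c$, and the proposition reduces to showing that exactly one $c$ makes $\psi$ vanish.

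First I would compute $\psi$ for the lift $s=\sigma+\ell$. For $\eta_1,\dots,\eta_n\in W$, expanding $s(\eta_1)\wedge\dots\wedge s(\eta_n)$ kills every summand containing two factors from $\omega_B$ (since $\theta\wedge\theta=0$), leaving the all-$\sigma$ term and the $n$ terms with a single $\ell(\eta_i)$. Feeding this into the defining map $\bigwedge^n f_*\Omega^1_X\otimes T_B\to f_*\omega_{X/B}$ of (\ref{agg}) and contracting with $\partial_t$, the all-$\sigma$ term is by construction the Massey product $m_\sigma(\eta_1,\dots,\eta_n)$ for the reference splitting, while the $i$-th correction term $(-1)^{i-1}c(\eta_i)\,\theta\wedge\bigwedge_{j\neq i}\sigma(\eta_j)$ contracts to $(-1)^{i-1}c(\eta_i)\,\omega_i$, with $\omega_i$ the form of Definition \ref{omegai} (this is exactly the computation showing the Massey product is well defined modulo $\sW$). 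Hence
\[
\psi(s\eta_1\wedge\dots\wedge s\eta_n\otimes\partial_t)=m_\sigma(\eta_1,\dots,\eta_n)+\sum_{i=1}^n(-1)^{i-1}c(\eta_i)\,\omega_i,
\]
so that $\psi\equiv 0$ is equivalent to $m_\sigma(\eta_1,\dots,\eta_n)=\sum_i(-1)^{i}c(\eta_i)\,\omega_i$ for every $n$-uple in $W$. Note that the right-hand side is the standard alternating interior-product expansion of the linear functional $c$ against the forms $\omega_i$, hence automatically alternating multilinear in $(\eta_1,\dots,\eta_n)$.

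By Massey triviality (Definition \ref{mastriv}) each $m_\sigma(\eta_1,\dots,\eta_n)$ lies in $\sW$, and by strictness (Definition \ref{strict}) the $\omega_i$ are $\sO_B$-linearly independent, so there are unique coefficients $\alpha_i(\eta_1,\dots,\eta_n)\in\Gamma(A,\sO_B)$ with $m_\sigma(\eta_1,\dots,\eta_n)=\sum_i\alpha_i\,\omega_i$. The heart of the argument is to show that $(-1)^{i}\alpha_i$ comes from a single linear functional, i.e.\ that $\alpha_i$ depends only on $\eta_i$. To prove this I fix a basis $e_1,\dots,e_r$ of $W$ (with $r\ge n$) and, for an $(n-1)$-subset $I$ and indices $p,q\notin I$, apply Massey triviality to the three $n$-uples obtained from $I$ by prepending $e_p$, $e_q$, and $e_p+e_q$. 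Multilinearity of the Massey product, together with the linear independence (strictness) of the distinct wedges $\omega_{J\setminus j}$, forces the coefficient attached to $\omega_{J\setminus j}$ to be unchanged under a single-element swap of $J$ that fixes the omitted index $j$; since such swaps connect all $n$-subsets containing a given index, the coefficient depends only on $j$, and multilinearity upgrades this to a well-defined linear $c\colon W\to\Gamma(A,\sO_B)$. This $c$ produces the required lifting $\widetilde{W}$, giving existence.

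For uniqueness, two liftings making $\psi$ vanish differ by some $\delta\colon W\to\Gamma(A,\omega_B)$, $\delta(\eta)=d(\eta)\theta$; subtracting the two expressions for $\psi$ yields $\sum_i(-1)^{i-1}d(\eta_i)\,\omega_i=0$ for all $n$-uples, and evaluating on $n$-uples of distinct basis vectors and invoking strictness forces $d\equiv 0$, hence $\widetilde{W}$ is unique. I expect the main obstacle to be precisely the middle step: reducing the a priori tuple-dependent coefficients $\alpha_i$ to a single linear functional. This is exactly where it is essential that Massey triviality is assumed for \emph{all} $n$-uples of $W$, not merely for a basis, and that strictness makes the $\omega_{J\setminus j}$ independent so that the polarization identities can be read coefficientwise; the bookkeeping of signs and orderings in the wedge expansions is routine but must be tracked carefully to land on a genuinely linear, sign-consistent $c$.
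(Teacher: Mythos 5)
Your proof is correct, and it runs on the same two ingredients as the paper's: corrections of liftings by multiples of $dt$ whose coefficients are produced by Massey triviality and pinned down by strictness, and a polarization trick --- Massey triviality applied to an $n$-uple containing a \emph{sum} of generators, compared coefficientwise via strictness --- to force consistency of those coefficients across different tuples. The architecture, however, is genuinely different. The paper argues by induction on $\dim W$: the base case $\dim W=n$ corrects each lifting $s_i$ by $(-1)^{n-i}a_i\,dt$; the inductive step corrects only the newly added lifting $s_k$, comparing $\psi(t_1\wedge\dots\wedge t_{n-1}\wedge s_k\otimes \partial/\partial t)$ with $\psi(t_1\wedge\dots\wedge t_{n-1}\wedge(\sum_{i\geq n}t_i+s_k)\otimes\partial/\partial t)$, and then swap arguments ($t_s$ versus $t_s+t_1$) propagate the vanishing to all index tuples; uniqueness is only remarked at the end. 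You instead parametrize all liftings of $W$ at once by $\mC$-linear functionals $c\colon W\to\Gamma(A,\sO_B)$, which converts the proposition into a single assertion --- the Massey-triviality coefficients are induced by one linear functional --- proved by your swap/polarization lemma together with the connectivity of the family of $n$-subsets containing a fixed index under single-element swaps. This buys you existence and uniqueness simultaneously (uniqueness is just injectivity, granted strictness, of $c\mapsto\sum_i(-1)^{i-1}c(\eta_i)\,\omega_i$) and eliminates the induction; the cost is the explicit combinatorial connectivity argument and the sign bookkeeping, which is indeed only routine because, as you observe, both sides of your identity are alternating multilinear and therefore need only be matched on ordered basis tuples. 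Both proofs make essential use of the fact that Massey triviality is assumed for \emph{all} $n$-uples of $W$, not merely for basis tuples.
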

\begin{proof}
We work by induction on the dimension of $W$. We start with $\dim W=n$.

If $\dim W=n$ take $\eta_1,\ldots,\eta_{n}$ a basis of $W$ and $s_1,\dots,s_{n}\in \Gamma(A,f_*\Omega^1_S)$ arbitrary liftings via Lemma (\ref{split}). 

By hypothesis there exist $a_i$ holomorphic functions on $A$ such that
\begin{equation}
\label{ipotesi}
\psi(s_1\wedge\dots\wedge s_{n}\otimes \frac{\partial}{\partial t})=\sum^{n}_{i=1} a_i\omega_i
\end{equation} where the $\omega_i$ are as in Definition (\ref{omegai}).

Defining a new lifting for the element $\eta_i$:
\begin{equation*}
t_i:=s_i+(-1)^{n-i}a_i\cdot dt
\end{equation*} we have 
\begin{equation*}
\begin{split}
\psi(t_1\wedge\dots\wedge t_{n}\otimes \frac{\partial}{\partial t})=\psi(s_1\wedge\dots\wedge s_{n}\otimes \frac{\partial}{\partial t})-\sum_{i=1}^{n} a_i\psi(s_1\wedge\dots\wedge\widehat{s_i}\wedge\dots\wedge s_{n}\wedge dt\otimes \frac{\partial}{\partial t})=\\
\psi(s_1\wedge\dots\wedge s_{n}\otimes \frac{\partial}{\partial t})-\sum^{n}_{i=1} a_i\omega_i=0.
\end{split}
\end{equation*}

No we perform the induction step. Assume now that $\dim W=k>n$.

We take a basis $\eta_1,\dots,\eta_k$ of $W$ and by induction we can choose liftings $t_1,\dots,t_{k-1}$ of $\eta_1,\dots,\eta_{k-1}$ such that the map $\psi$ is zero on $\langle t_1,\dots,t_{k-1} \rangle$. Finally we take $s_k$ a lifting of $\eta_k$.

Calling $\phi$ the map in (\ref{wedge}), by the hypothesis of Massey triviality for every $n$ elements in $W$, there exists constants  $b,b'$ and $a_i, c_i$, $i=1,...,n-1$, such hat 
\begin{equation*}
\psi(t_1\wedge\dots\wedge t_{n-1}\wedge s_k\otimes\frac{\partial}{\partial t})=\sum^{n-1}_{i=1}a_i\phi(\eta_1\wedge\dots\wedge\widehat{\eta_i}\wedge\dots\wedge \eta_{n-1}\wedge\eta_k)+b \phi(\eta_1\wedge\dots\wedge \eta_{n-1})
\end{equation*} and 
\begin{equation*}
\begin{split}
\psi(t_1\wedge\dots\wedge t_{n-1}\wedge (\sum_{i=n}^{k-1}t_i+s_k)\otimes\frac{\partial}{\partial t})=\sum^{n-1}_{i=1}c_i\phi(\eta_1\wedge\dots\wedge\widehat{\eta_i}\wedge\dots\wedge \eta_{n-1}\wedge (\sum_{i=n}^{k-1}\eta_i+\eta_k))+\\+b' \phi(\eta_1\wedge\dots\wedge \eta_{n-1}).
\end{split}
\end{equation*}
Now by the induction hypotesis we have that 
\begin{equation*}
\psi(t_1\wedge\dots\wedge t_{n-1}\wedge s_k\otimes\frac{\partial}{\partial t})=\psi(t_1\wedge\dots\wedge t_{n-1}\wedge (\sum_{i=n}^{k-1}t_i+s_k)\otimes\frac{\partial}{\partial t})
\end{equation*}
and by the strictness of $W$ we can compare their expressions and obtain $b=b'$ and $a_i=c_i=0$.

Therefore 
\begin{equation*}
\psi(t_1\wedge\dots\wedge t_{n-1}\wedge s_k\otimes\frac{\partial}{\partial t})=b \phi(\eta_1\wedge\dots\wedge \eta_{n-1})
\end{equation*} and choosing $t_k=s_k-bdt$ we have that 
\begin{equation*}
\psi(t_1\wedge\dots\wedge t_{n-1}\wedge t_k\otimes\frac{\partial}{\partial t})=0
\end{equation*} hence $t_1\wedge\dots\wedge t_{n-1}\wedge t_k$ is also zero as a section of $f_*\omega_X$.

It remains to prove that given any choice of indices $\{j_1,\dots,j_{n-1}\}\subset \{1,\dots,k\}$ we have 
\begin{equation*}
\psi(t_{j_1}\wedge\dots\wedge t_{j_{n-1}}\wedge t_k\otimes\frac{\partial}{\partial t})=0.
\end{equation*}

It is enough to show this claim for $t_s\wedge t_2\wedge \dots\wedge t_{n-1}\wedge t_k$ where $s\neq 1$ since the same procedure can be iterated if necessary to obtain the general statement.

Using $t_1\wedge\dots\wedge t_{n-1}\wedge t_k=0$ and the induction hypothesis we have the inequalities
\begin{equation}
t_{s}\wedge t_2\wedge\dots\wedge t_{{n-1}}\wedge (t_k+t_1)=t_{s}\wedge t_2\wedge\dots\wedge t_{{n-1}}\wedge t_k=(t_{s}+t_1)\wedge t_2\wedge\dots\wedge t_{n-1}\wedge t_k
\end{equation}
Using the Massey triviality on the first equality we obtain
\begin{multline*}
\alpha_s\phi(\eta_2\wedge\dots\wedge\eta_{{n-1}}\wedge(\eta_k+\eta_1))+\sum^{n-1}_{l=2}\alpha_l\phi(\eta_{s}\wedge\eta_2\dots\wedge\widehat{\eta_{l}}\wedge\dots\wedge\eta_{{n-1}}\wedge(\eta_k+\eta_1))+\alpha_k \phi(\eta_{s}\wedge\eta_2\wedge\dots\wedge\eta_{{n-1}})=\\
=\beta_s\phi(\eta_2\wedge\dots\wedge\eta_{{n-1}}\wedge\eta_k)+\sum^{n-1}_{l=2}\beta_l\phi(\eta_{s}\wedge\eta_2\dots\wedge\widehat{\eta_{l}}\wedge\dots\wedge\eta_{{n-1}}\wedge\eta_k)+\beta_k\phi(\eta_{s}\wedge\eta_2\wedge\dots\wedge\eta_{{n-1}})
\end{multline*}
Using the second equality we have

\begin{multline*}
\beta_s\phi(\eta_2\wedge\dots\wedge\eta_{{n-1}}\wedge\eta_k)+\sum^{n-1}_{l=2}\beta_l\phi(\eta_{s}\wedge\eta_2\dots\wedge\widehat{\eta_{l}}\wedge\dots\wedge\eta_{{n-1}}\wedge\eta_k)+\beta_k\phi(\eta_{s}\wedge\eta_2\wedge\dots\wedge\eta_{{n-1}})=\\
=\mu_s\phi(\eta_2\wedge\dots\wedge\eta_{{n-1}}\wedge\eta_k)+\sum^{n-1}_{l=2}\mu_l\phi((\eta_{s}+\eta_1)\wedge\eta_2\dots\wedge\widehat{\eta_{l}}\wedge\dots\wedge\eta_{{n-1}}\wedge\eta_k)+\mu_k \phi((\eta_{s}+\eta_1)\wedge\eta_2\wedge\dots\wedge\eta_{{n-1}})
\end{multline*}
Where the $\alpha$'s, $\beta$'s and $\mu$'s are holomorphic functions on $A$.
Using the strictness as before we immediately find that all these functions vanish, giving us the desired result.

As a final remark note that the uniqueness of the liftings $t_i$ is immediate.
\end{proof}
In the case $W\subset \Gamma(A,\mD)\subset\Gamma(A,K_\partial)$, Proposition \ref{wedge0} becomes  
\begin{prop}
\label{chiusezero}
Let $W\subset \Gamma(A, \mD)$ a Massey trivial strict subspace. Then there exists a unique $\widetilde{W}\subset \Gamma(A,f_*\Omega^1_{S,d})$ lifting $W$ such that the map
$$
\bigwedge^{n}\widetilde{W}\to \Gamma(A,f_*\omega_{X})
$$ is zero.
\end{prop}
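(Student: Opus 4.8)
The plan is to deduce this proposition directly from Proposition \ref{wedge0}, by showing that once $W\subset \Gamma(A,\mD)$ the distinguished lifting produced there is automatically made of $d$-closed forms. First I would apply Proposition \ref{wedge0} with $W$ regarded as a subspace of $\Gamma(A,K_\partial)$; this is legitimate because $\mD\hookrightarrow K_\partial$ by Lemma \ref{lemmainclusione}, and the notions of Massey triviality and strictness depend only on the ambient $K_\partial$, so the hypotheses transfer unchanged. This yields a unique $\widetilde{W}\subset \Gamma(A,f_*\Omega^1_X)$ lifting $W$ for which the adjoint map $\psi\colon \bigwedge^n\widetilde{W}\otimes T_A\to f_*\omega_{X/B|A}$ vanishes. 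It then remains only to prove the inclusion $\widetilde{W}\subset \Gamma(A,f_*\Omega^1_{X,d})$ and to translate the vanishing of $\psi$ into the vanishing of the stated wedge map into $f_*\omega_X$.

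The key observation is that the entire ambiguity in lifting a section of $K_\partial$ to $f_*\Omega^1_X$ is by $d$-closed forms. Indeed, by Sequence (\ref{seqK}) two liftings of a fixed $\eta$ differ by a section of $\omega_B$, and the image of $\omega_B$ in $f_*\Omega^1_X$ consists of the forms locally of the shape $f^*(a(t)\,dt)$, which are closed; this is precisely the sheaf $C$ of Diagram (\ref{diagrammadeRhamker}), for which $f_*C=\omega_B$. On the other hand, since $W\subset \Gamma(A,\mD)$ and $A$ is contractible, hence Stein with $H^1(A,\omega_B)=0$, the first terms $0\to \omega_B\to f_*\Omega^1_{X,d}\to \mD\to 0$ of Sequence (\ref{seq2}) show that every generator $\eta_i$ of $W$ admits at least one closed lifting over $A$. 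Combining the two facts, an arbitrary lifting of $\eta_i$ in $f_*\Omega^1_X$ is the sum of a closed lifting and a section of $\omega_B$, hence is itself closed; in particular the distinguished lifting $\widetilde{W}$ furnished by Proposition \ref{wedge0} lies in $\Gamma(A,f_*\Omega^1_{X,d})$. This is just the characterisation of $\mD\subset K_\partial$ as the forms liftable to closed forms made quantitative: for $\eta\in K_\partial$ either all of its lifts are closed, when $\eta\in\mD$, or none of them is.

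Finally I would reconcile the two target sheaves. By the projection formula $f_*\omega_{X/B}=f_*\omega_X\otimes T_B$, so the map $\psi$ of Proposition \ref{wedge0} is exactly the wedge map $\bigwedge^n\widetilde{W}\to \Gamma(A,f_*\omega_X)$ tensored by the invertible sheaf $T_A$; since tensoring with a line bundle is invertible, $\psi=0$ is equivalent to the vanishing of $\bigwedge^n\widetilde{W}\to \Gamma(A,f_*\omega_X)$, which is the assertion, and uniqueness is inherited verbatim from Proposition \ref{wedge0}. I do not anticipate a genuine obstacle: the only points that must be checked with care are that the lifting ambiguity $\omega_B\hookrightarrow f_*\Omega^1_X$ really does land in the closed forms, and that a closed lift exists over all of $A$; both are guaranteed, respectively, by the construction of $\mD$ via Diagram (\ref{diagrammadeRhamker}) and by the Steinness of the contractible set $A$.
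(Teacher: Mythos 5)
Your proposal is correct and follows essentially the same route as the paper: the paper also deduces the statement from Proposition \ref{wedge0}, observing that one need only check $\widetilde{W}\subset \Gamma(A,f_*\Omega^1_{X,d})$, which holds because every lifting of a section of $\mD$ is a closed form. Your write-up merely makes the paper's one-line justification explicit (a closed lift exists over the contractible, hence Stein, set $A$, and the lifting ambiguity $\omega_B\cong f_*C\hookrightarrow f_*\Omega^1_X$ consists of closed forms), which is a faithful expansion rather than a different argument.
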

\begin{proof}
We only have to verify that $\widetilde{W}\subset \Gamma(A,f_*\Omega^1_{S,d})$.
This holds because by definition of $\mD$ all the liftings of the sections $\eta_i$ are closed forms.
\end{proof}

The following result relates in some sense the property of being locally Massey trivial and globally Massey trivial.

\begin{prop}
\label{lglob}
Let $W\subset \Gamma(B, K_{\partial})$ be a strict subspace of global sections of $K_{\partial}$ and let $A\subset B$ an open contractible subset. If the sections of $W$ are Massey trivial when restricted to $A$ then they are Massey trivial everywhere. 
\end{prop}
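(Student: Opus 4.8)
The plan is to exploit the fact that, once we fix the global splitting of Lemma \ref{split}, the entire Massey construction is a morphism of sheaves on the whole of $B$, so that the Massey product of \emph{global} sections is itself a global object whose local vanishing propagates by analytic continuation. Since $W$ is Massey trivial means that every $n$-tuple drawn from $W$ is Massey trivial, and the condition on one $n$-tuple is independent of the others, it suffices to fix a single $n$-tuple $\eta_1,\dots,\eta_n\in W$ and prove the assertion for it; running the argument over all $n$-tuples then yields the statement for $W$.

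First I would record that the map (\ref{agg}), built from the global splitting (\ref{split2}), is a genuine morphism of $\sO_B$-modules over all of $B$; untwisting by $\omega_B$ it reads $\tilde\psi\colon\bigwedge^n K_\partial\to f_*\omega_{X/B}\otimes\omega_B=f_*\omega_X$. Because the $\eta_i$ are global sections of $K_\partial$, the element $\eta_1\wedge\dots\wedge\eta_n$ is a global section of $\bigwedge^n K_\partial$, and hence $\tilde\psi(\eta_1\wedge\dots\wedge\eta_n)$ is a global section of $f_*\omega_{X/B}\otimes\omega_B$. On a contractible $A$ with coordinate $t$, its contraction with $\partial/\partial t$ is exactly the Massey product of Definition \ref{mtrivial}, so the notion computed open set by open set is the restriction of this one global section.

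The decisive step is to make the target torsion-free, and this is where strictness enters. The hypothesis that $\bigwedge^{n-1}W\otimes\sO_B\to f_*\omega_{X/B}$ is an injection of vector bundles says precisely that the submodule $\sW$ generated by the $\omega_i$ is a subbundle, so the quotient $Q:=(f_*\omega_{X/B})/\sW$ is locally free and so is $Q\otimes\omega_B$. Composing $\tilde\psi(\eta_1\wedge\dots\wedge\eta_n)$ with the projection to $Q\otimes\omega_B$ produces a global holomorphic section $\sigma\in\Gamma(B,Q\otimes\omega_B)$, and by construction $\sigma$ vanishes on an open set if and only if the $n$-tuple is Massey trivial there. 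The hypothesis gives $\sigma|_A=0$.

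Finally I would invoke the identity theorem: a holomorphic section of a vector bundle on the connected curve $B$ that vanishes on the nonempty open subset $A$ vanishes identically. Hence $\sigma\equiv 0$ on $B$, which means the $n$-tuple is Massey trivial on every open subset, that is, everywhere. The one point that genuinely requires care, and the main obstacle, is the torsion-freeness of $Q\otimes\omega_B$: without strictness the module $\sW$ need not be saturated, $Q$ could carry torsion supported away from $A$, and $\sigma$ could be a nonzero torsion section vanishing on $A$ but not on $B$, so that local triviality would fail to propagate. Strictness is exactly the hypothesis that rules this out, and it is the only place where it is used.
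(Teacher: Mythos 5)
Your proof is correct, and it rests on the same pillars as the paper's own argument: the global splitting of Lemma \ref{split}, which makes the Massey construction (\ref{agg}) a morphism of sheaves on all of $B$, and strictness combined with analytic continuation on the connected curve $B$. The organization, however, is genuinely different. The paper argues chart by chart: on a second contractible open set $A'$ meeting $A$ it uses strictness to complete the $\omega_i$ of Definition \ref{omegai} to a local frame of $f_*\omega_{X/B}|_{A'}$, expands the Massey product in that frame, compares coefficients over $A\cap A'$ via the transition rule $\partial/\partial t'=(\partial t/\partial t')\,\partial/\partial t$, concludes that the extra coefficients $b_j$ vanish on $A\cap A'$ and hence on all of $A'$ by the identity theorem for functions, and then iterates over a cover of $B$. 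You globalize instead: strictness makes $\sW$ a subbundle, hence $Q=(f_*\omega_{X/B})/\sW$ and $Q\otimes\omega_B$ are locally free, the whole Massey datum becomes a single global section $\sigma\in\Gamma(B,Q\otimes\omega_B)$, and one application of the identity theorem for sections of a vector bundle on a connected curve finishes the proof. Your version is tidier: it eliminates the covering/chaining step and isolates exactly what strictness buys, namely that $Q$ carries no torsion in which a section vanishing on $A$ could hide. What the paper's coefficient-chasing buys in exchange is the explicit rule $a_i'=(\partial t/\partial t')a_i$, which is then reused verbatim to prove Proposition \ref{locglob}; but your argument recovers that statement as well, since strictness identifies $\sW\otimes\omega_B$ with $\bigwedge^{n-1}W\otimes_{\mC}\omega_B$, so the vanishing of $\sigma$ says precisely that the global section $\tilde\psi(\eta_1\wedge\dots\wedge\eta_n)$ of $f_*\omega_X$ lies in $\bigwedge^{n-1}W\otimes H^0(B,\omega_B)$.
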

\begin{proof}
Since the dimension of $W$ doesn't play any role, for simplicity we assume that $\dim W=n$. Take $\eta_1,\ldots,\eta_{n}$ a basis of $W$ and $\widetilde{W}=\langle s_1,\dots,s_{n}\rangle < \Gamma(B,f_*\Omega^1_X)$ a lifting of $W$ via Lemma (\ref{split}). The $s_i$'s are global holomorphic 1-forms on $X$. 

By hypothesis there exist $t$ local parameter and $a_i$ holomorphic functions on $A$ such that
\begin{equation}
\label{ipotesi1}
\psi(s_1\wedge\dots\wedge s_{n}\otimes \frac{\partial}{\partial t})=\sum^{n}_{i=1} a_i\omega_i
\end{equation}
Now take $A'\subset B$ another open contractible subset with nontrivial intersection with $A$ and call
$$
\psi'\colon \widetilde{W}\otimes T_B\to f_*\omega_{X/B_{|A'}}
$$ the map which gives the Massey product on $A'$.
By the strictness hypothesis we can complete the $\omega_i$'s to a local frame of $f_*\omega_{X/B_{|A'}}$, hence in this frame we have
$$
\psi'(s_1\wedge\dots\wedge s_{n}\otimes \frac{\partial}{\partial t'})=\sum^{n}_{i=1} a'_i\omega_i+\sum_{j} b_j\tau_j.
$$
By comparing with (\ref{ipotesi1}) on $A\cap A'$ where $\partial/\partial t'=\partial t/\partial t'\cdot \partial/\partial t$ we have 
\begin{multline}
\sum^{n}_{i=1} a'_i\omega_i+\sum_{j} b_j\tau_j=\psi'(s_1\wedge\dots\wedge s_{n}\otimes \frac{\partial}{\partial t'})=\\=\frac{\partial t}{\partial t'}\psi(s_1\wedge\dots\wedge s_{n}\otimes \frac{\partial}{\partial t'})=\sum^{n}_{i=1} \frac{\partial t}{\partial t'}a_i\omega_i.
\end{multline}
hence the $b_j$ vanish on $A\cap A'$ and hence everywhere on $A'$. This proves the Massey triviality on $A'$.
By iterating on an appropriate cover of $B$ we are done.  
\end{proof}
By the same computations we also immediately prove 
\begin{prop}
\label{locglob}
Under the hypotheses of the previous Proposition, call  $\widetilde{W}$ a lifting of $W$ in $H^0(B,f_*\Omega^1_X)$. We have that the image of $\bigwedge^{n}\widetilde{W} \to H^0(B,f_*\omega_X)$ is contained in the image of $H^0(B,\omega_B)\otimes \bigwedge^{n-1}\widetilde{W}\to H^0(B,f_*\omega_X)$. In particular if $B=\mP^1$ we have that the image of $\bigwedge^{n}\widetilde{W} \to H^0(B,f_*\omega_X)$ is zero.
\end{prop}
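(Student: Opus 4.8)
The plan is to run the same frame comparison as in the proof of Proposition \ref{lglob}, but now to read off the coefficients $a_i$ as the components of a \emph{global} $1$-form on $B$ rather than merely to annihilate the extra terms. As there, I would first reduce to the case $\dim W=n$: the space $\bigwedge^{n}\widetilde W$ is spanned by the $n$-fold wedges of a basis, each such wedge is the Massey product of an $n$-tuple in $W$ (hence Massey trivial), and strictness of $W$ restricts to strictness of every $n$-subset, so it suffices to treat one such tuple. Fix then a basis $\eta_1,\dots,\eta_n$ of $W$, global liftings $s_1,\dots,s_n\in H^0(B,f_*\Omega^1_X)=H^0(X,\Omega^1_X)$ via Lemma \ref{split}, and set $\Sigma:=s_1\wedge\cdots\wedge s_n\in H^0(X,\omega_X)$ together with the global $(n-1)$-forms $\omega_i^{\flat}:=s_1\wedge\cdots\wedge\widehat{s_i}\wedge\cdots\wedge s_n\in\bigwedge^{n-1}\widetilde W$.

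Next I would unwind the definitions on a contractible chart $A$ with parameter $t$. Since $s_j$ lifts $\eta_j$, the inclusion $f_*\Omega^{n-1}_{X/B}\hookrightarrow f_*\omega_{X/B}$ of (\ref{inclusione}) identifies the $\omega_i$ of Definition \ref{omegai} with $f^*(dt)\wedge\omega_i^{\flat}\otimes\partial/\partial t$, while the Massey product of the liftings is just $\psi(s_1\wedge\cdots\wedge s_n\otimes\partial/\partial t)=\Sigma\otimes\partial/\partial t$. By Proposition \ref{lglob} the sections of $W$ are Massey trivial on every chart, so on $A$ one may write $\Sigma\otimes\partial/\partial t=\sum_i a_i\,\omega_i$; cancelling the local generator $\partial/\partial t$ of $T_B$ and absorbing $dt$ yields the absolute $n$-form identity
\[
\Sigma=\sum_{i=1}^{n} f^*(\alpha_i^{A})\wedge\omega_i^{\flat},\qquad \alpha_i^{A}:=a_i\,dt\in\Gamma(A,\omega_B),
\]
which exhibits $\Sigma$ locally in the image of $H^0(\omega_B)\otimes\bigwedge^{n-1}\widetilde W$.

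Then I would glue the local coefficients. On an overlap $A\cap A'$ with parameters $t,t'$, writing $\partial/\partial t'=(\partial t/\partial t')\,\partial/\partial t$ and comparing the two frame expressions exactly as in Proposition \ref{lglob}, the strictness of $W$ (injectivity of $\bigwedge^{n-1}W\otimes\sO\to f_*\omega_{X/B}$, so the $\omega_i$ are linearly independent over the structure sheaf) forces $a_i'=(\partial t/\partial t')\,a_i$. This is precisely the transformation law making $\{a_i\,dt\}$ a well-defined global section $\alpha_i\in H^0(B,\omega_B)$, whence $\Sigma=\sum_i f^*(\alpha_i)\wedge\omega_i^{\flat}$ lies in the image of $H^0(B,\omega_B)\otimes\bigwedge^{n-1}\widetilde W\to H^0(B,f_*\omega_X)$, which is the first assertion; when $B=\mP^1$ we have $H^0(\mP^1,\omega_{\mP^1})=0$, so every $\alpha_i$ vanishes and $\Sigma=0$. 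The only genuinely delicate point, and the step I expect to be the main obstacle, is the correct bookkeeping that matches the \emph{relative} datum $\sum_i a_i\omega_i\in f_*\omega_{X/B}$ with the \emph{absolute} identity for $\Sigma$ in $f_*\omega_X$, i.e. keeping careful track of the $\otimes\,\partial/\partial t$ twist and of the embedding (\ref{inclusione}); once this is in place, the gluing is the same computation as in Proposition \ref{lglob} and strictness does all the remaining work.
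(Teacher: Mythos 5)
Your proposal is correct and follows essentially the same route as the paper's own proof: both use the frame comparison of Proposition \ref{lglob} to obtain the transformation law $a_i'=\frac{\partial t}{\partial t'}a_i$, glue the local coefficients $a_i\,dt$ into global sections of $\omega_B$, and conclude the identity $s_1\wedge\dots\wedge s_n=\sum_i\sigma_i\wedge s_1\wedge\dots\wedge\widehat{s_i}\wedge\dots\wedge s_n$, with the $\mP^1$ case following from $H^0(\mP^1,\omega_{\mP^1})=0$. The relative-versus-absolute bookkeeping you flag as the delicate point is handled implicitly in the paper in exactly the way you describe.
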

\begin{proof}
We take $\eta_i$ and $s_i$ as in the proof of the previous Proposition.
By the same computations we get on $A\cap A'$ the relation 
$$
a_i'=\frac{\partial t}{\partial t'}a_i
$$ hence the 1-forms on $B$ defined by $a_idt$ and $a_i'dt'$ glue together and produce global 1-forms on $B$, call them $\sigma_i$. From the Massey triviality we then obtain
$$
s_1\wedge\dots\wedge s_n=\sum_{i=1}^n \sigma_i\wedge s_1\wedge\dots\wedge\widehat{s_i}\wedge\dots\wedge s_n
$$ which is our thesis.
\end{proof}

These results allow us to give a \lq\lq local to global\rq\rq version of Proposition \ref{wedge0} and \ref{chiusezero}

\begin{prop}
\label{localglobal}
Let $W\subset \Gamma(B, K_{\partial})$ a strict subspace of global sections of $K_{\partial}$ and let $A\subset B$ be an open contractible subset. If the sections of $W$ are Massey trivial when restricted to $A$ then there exist a unique lifting $\widetilde{W}\subset \Gamma(B, f_*\Omega^1_X)$ such that 
$$
\bigwedge^{n}\widetilde{W}\to \Gamma(B,f_*\omega_{X})
$$ is zero. If furthermore $W\subset \Gamma(B, \mD)$ then $\widetilde{W}\subset \Gamma(B, f_*\Omega^1_{X,d})$.
\end{prop}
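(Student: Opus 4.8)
The plan is to promote the local liftings produced by Proposition \ref{wedge0} to a global one, using Propositions \ref{lglob} and \ref{locglob} to guarantee that the correction terms are defined over all of $B$. When $\dim W=n$ the construction is explicit. I would take a basis $\eta_1,\dots,\eta_n$ of $W$ and global liftings $s_1,\dots,s_n\in\Gamma(B,f_*\Omega^1_X)$ furnished by the splitting of Lemma \ref{split}. By Proposition \ref{locglob} the local correction functions appearing in Proposition \ref{wedge0} glue to global one-forms $\sigma_i\in H^0(B,\omega_B)$ satisfying $s_1\wedge\cdots\wedge s_n=\sum_{i}\sigma_i\wedge s_1\wedge\cdots\wedge\widehat{s_i}\wedge\cdots\wedge s_n$ in $\Gamma(B,f_*\omega_X)$. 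Viewing each $\sigma_i$ inside $\Gamma(B,f_*\Omega^1_X)$ through the inclusion $\omega_B\hookrightarrow f_*\Omega^1_X$ and setting $t_i=s_i+(-1)^{n-i}\sigma_i$ as in the base step of Proposition \ref{wedge0}, I would expand $t_1\wedge\cdots\wedge t_n$: every summand containing two of the $\sigma_i$ vanishes because $\sigma_i\wedge\sigma_j$ is the pullback of a $2$-form from the curve $B$, and the surviving single-$\sigma$ terms cancel $s_1\wedge\cdots\wedge s_n$ by the displayed relation. Thus $\widetilde W=\langle t_1,\dots,t_n\rangle$ lifts $W$ and $\bigwedge^n\widetilde W\to\Gamma(B,f_*\omega_X)$ is zero.

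For $\dim W>n$ I would instead glue the local data. Since $W$ is strict and Massey trivial on $A$, Proposition \ref{lglob} makes it Massey trivial on every member of a contractible open cover $\{A_\alpha\}$ of $B$, while strictness restricts to each $A_\alpha$ since injectivity of a bundle map is preserved under restriction; hence Proposition \ref{wedge0} yields on each $A_\alpha$ a unique lifting $\widetilde W_\alpha$ killing the associated map $\psi$. For a fixed $\eta\in W$ the lifts of $\eta$ inside $\widetilde W_\alpha$ and $\widetilde W_\beta$ agree on any small contractible open of $A_\alpha\cap A_\beta$ by the uniqueness in Proposition \ref{wedge0}, hence on the whole overlap by the identity theorem for holomorphic sections of the locally free sheaf $f_*\Omega^1_X$; the local liftings therefore patch to a global $\widetilde W\subset\Gamma(B,f_*\Omega^1_X)$. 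After trivialising $\omega_B$ by a local coordinate, the map $\bigwedge^n\widetilde W\to f_*\omega_X$ restricts on each $A_\alpha$ to $\psi$, so it vanishes on the cover and hence on $B$. Uniqueness of $\widetilde W$ is then clear: any competing global lifting restricts on $A$ to a lifting satisfying the hypotheses of Proposition \ref{wedge0}, so it agrees with $\widetilde W$ on $A$ and therefore, by the identity theorem, everywhere.

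For the case $W\subset\Gamma(B,\mD)$ I would replace Proposition \ref{wedge0} by Proposition \ref{chiusezero}, whose local liftings lie in $f_*\Omega^1_{X,d}$; closedness being a pointwise condition, the glued lifting lies in $\Gamma(B,f_*\Omega^1_{X,d})$. In the explicit construction this is immediate, for the $s_i$ may be chosen closed by the very definition of $\mD$, and each $\sigma_i=f^*\alpha_i$ is closed since $d(f^*\alpha_i)=f^*(d\alpha_i)=0$ on the curve $B$. I expect the only genuinely delicate point to be the globalisation of the correction terms, namely the compatibility of the local liftings on overlaps; this is precisely where the uniqueness in Proposition \ref{wedge0} and the gluing statement of Proposition \ref{locglob} carry the weight, after which the wedge computation and the closedness bookkeeping are routine.
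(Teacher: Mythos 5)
Your proposal is correct and follows exactly the route the paper intends: Proposition \ref{lglob} to propagate Massey triviality from $A$ to all of $B$, Proposition \ref{locglob} to glue the local correction functions into global forms $\sigma_i\in H^0(B,\omega_B)$, and then the lifting modification $t_i=s_i+(-1)^{n-i}\sigma_i$ from the base step of Proposition \ref{wedge0}, with Proposition \ref{chiusezero} handling the closed case. The paper states Proposition \ref{localglobal} without writing out a proof, and your version supplies precisely the intended assembly (plus the routine gluing for $\dim W>n$ and the uniqueness bookkeeping), so no gap remains.
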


\begin{rmk}
The second statement is meaningful only if $B$ is not compact, otherwise global 1-forms on $X$ are automatically closed. 
\end{rmk}

\begin{rmk}
Given an $n$-dimensional variety $Y$, a subspace $W$ of $H^0(Y,\Omega^1_Y)$ is usually called strict if the natural map from $\bigwedge^n W$ to $H^0(Y,\omega_Y)$ is an isomorphism on the image. See \cite[Definition 2.1 and 2.2]{Ca2} and \cite[Definition 2.2.1]{RZ1}.

If $W$ as above is a subspace of sections of $\mD$, $W\subset\Gamma(A,\mD)\subset\Gamma(A,K_\partial)$, then we can see $W$ as a subspace of $H^0(F_b,\Omega^1_{F_b})$ since $\mD$ is a local system. If $W$ is strict in the usual sense then it is strict according to Definition (\ref{strict}). In fact if $W$ is strict in the usual sense we have the injection $\bigwedge^{n-1}W\hookrightarrow \mD^{n-1}$ and taking the tensor product by $\sO_A$ we immediately get the desired injection $$\bigwedge^{n-1}W\otimes \sO_A\hookrightarrow \mD^{n-1}\otimes \sO_A\hookrightarrow f_*\omega_{X/B_{|A}}.$$
\end{rmk}

\section{A result on the monodromy of $\mD$ and $\mD^{n-1}$}
\label{sez4}
In this section we will give some results on the monodromy action associated to the local systems $\mD$ and $\mD^{n-1}$.
\subsection{The notion of Albanese primitive variety}
First we need to recall the Generalized version of the Castelnuovo-de Franchis Theorem, proved independently by Catanese in \cite[Theorem 1.14]{Ca2} and Ran in \cite[Prop II.1]{Ran}.
\begin{thm}
\label{cast1}
Let $X$ be an $n$-dimensional compact K\"ahler manifold and $w_1,\dots, w_{k+1} \in H^0 (X,\Omega^1_X)$ linearly independent 1-forms such that $w_1\wedge\dots\wedge w_{k+1}=0$ and that no collection of $k$ linearly independent forms in the span of $w_1,\dots, w_{k+1}$ wedges to zero.
Then there exists a holomorphic map $f\colon X\to Y$ over a normal variety $Y$ of dimension $\dim Y= k$ and such that $w_i\in f^*H^0(Y,\Omega^1_Y)$. Furthermore $Y$ is of Albanese general type.
\end{thm}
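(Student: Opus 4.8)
\emph{Proof plan.} The plan is to reduce everything to the Albanese torus and to a subvariety of a quotient torus. Since $X$ is compact K\"ahler, every holomorphic $1$-form is closed and is pulled back from $\mathrm{Alb}(X)$ via $\mathrm{alb}_X\colon X\to \mathrm{Alb}(X)$, and $H^0(X,\Omega^1_X)\cong H^0(\mathrm{Alb}(X),\Omega^1_{\mathrm{Alb}(X)})$. Set $W=\langle w_1,\dots,w_{k+1}\rangle$, a subspace of dimension $k+1$. To $W$ I would associate the quotient torus $q\colon \mathrm{Alb}(X)\to A_W$ of dimension $k+1$ characterized by $q^*H^0(A_W,\Omega^1_{A_W})=W$, and consider the composite $g=q\circ \mathrm{alb}_X\colon X\to A_W$, so that $w_i=g^*\bar w_i$ for a basis $\bar w_1,\dots,\bar w_{k+1}$ of $H^0(A_W,\Omega^1_{A_W})$.

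First I would pin down the dimension of the image $Z=g(X)\subset A_W$. Because $\bar w_1\wedge\dots\wedge\bar w_{k+1}$ is a nonzero top form on $A_W$ whose pullback $w_1\wedge\dots\wedge w_{k+1}$ vanishes on $X$, the map $g$ cannot be generically finite onto $A_W$, so $\dim Z\le k$. Conversely, if $\dim Z\le k-1$ then every holomorphic $k$-form on $Z$ vanishes; hence for any $k$ independent $v_1,\dots,v_k\in W$ the pullback $v_1\wedge\dots\wedge v_k=g^*(\bar v_1\wedge\dots\wedge\bar v_k)$ would vanish on $X$, against the hypothesis. Thus $\dim Z=k$. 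Taking the Stein factorization $X\to Y\to Z\hookrightarrow A_W$ produces a normal variety $Y$ of dimension $k$ with connected fibers over it; since the $w_i$ are pulled back from $A_W$ they descend to $Y$, giving $w_i\in f^*H^0(Y,\Omega^1_Y)$ for the connected map $f\colon X\to Y$.

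The main obstacle is to prove that $Y$ is of Albanese general type, and the crux is the finiteness of the stabilizer subtorus $T=\{a\in A_W : a+Z=Z\}$. Suppose $d=\dim T\ge 1$ and let $q_T\colon A_W\to A_W/T$ be the quotient. The forms vanishing along $T$ form the subspace $W_0=q_T^*H^0(A_W/T,\Omega^1_{A_W/T})$ of dimension $k+1-d$. At a general smooth point $z$ the tangent space $T_zZ$ contains the $d$-dimensional direction of $T$, so the restriction $W_0\to (T_zZ)^*$ lands in the annihilator of that direction and has rank at most $k-d$; therefore the wedge of a basis of $W_0$, a $(k+1-d)$-form, restricts to zero on $Z$. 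Wedging it with $d-1$ further forms chosen to complete a linearly independent set, I obtain $k$ linearly independent forms in $W$ whose wedge vanishes on $Z$, hence on $X$---contradicting the hypothesis that no $k$ independent forms in $W$ wedge to zero. Hence $T$ is finite.

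With $T$ finite, $Z$ is a subvariety of the torus $A_W$ with finite stabilizer, so it is of general type by the structure theory of subvarieties of complex tori (Ueno, Kawamata). Finally, the finite map $Y\to Z\subset A_W$ shows that $\mathrm{alb}_Y$ is generically finite onto its image and that this image dominates $Z$ finitely; by Iitaka's behavior of Kodaira dimension under generically finite dominant maps the Albanese image of $Y$ is then of general type, so $Y$ has maximal Albanese dimension with Albanese image of general type, that is, $Y$ is of Albanese general type. The two genuinely delicate points are the passage from the differential data to an actual holomorphic map (handled cleanly here by compactness through the Stein factorization) and the stabilizer computation above, which is exactly where the general-position hypothesis on the $w_i$ is used.
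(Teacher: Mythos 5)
The paper itself does not prove Theorem \ref{cast1}: it is quoted from Catanese \cite[Theorem 1.14]{Ca2} and Ran \cite[Prop. II.1]{Ran}, so your proposal has to stand on its own merits — and it does not, because there is a genuine gap at the very first step, exactly where the hard content of the theorem lies. You associate to $W=\langle w_1,\dots,w_{k+1}\rangle$ a quotient torus $q\colon \mathrm{Alb}(X)\to A_W$ with $q^*H^0(A_W,\Omega^1_{A_W})=W$. No such torus exists for an arbitrary $(k+1)$-dimensional subspace $W\subset H^0(X,\Omega^1_X)$. Quotient tori of $\mathrm{Alb}(X)=H^0(X,\Omega^1_X)^\vee/H_1(X,\mathbb{Z})$ correspond to quotient Hodge structures of $H^1(X,\mathbb{Z})$; concretely, a quotient torus whose $1$-forms pull back to $W$ exists if and only if the image of $H_1(X,\mathbb{Z})$ in $W^\vee$ is a lattice, equivalently $(W\oplus\overline{W})\cap H^1(X,\mathbb{Q})$ has rank $2(k+1)$. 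A generic subspace $W$ violates this rationality condition: for instance, no line in $H^0(A,\Omega^1_A)$ for a simple abelian surface $A$ corresponds to an elliptic quotient, since a simple abelian surface has none. Under the hypotheses of the theorem the rationality condition does hold, but only \emph{a posteriori}: it follows from the existence of the fibration $f$, because then $W\subset f^*H^0(Y,\Omega^1_Y)$ lies in the $(1,0)$-part of the rational sub-Hodge structure $f^*H^1(Y,\mathbb{Q})\subset H^1(X,\mathbb{Q})$. Your first paragraph never uses the wedge hypotheses, so it cannot produce $A_W$; the construction assumes essentially what is to be proved.

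The remainder of your argument — the dimension count giving $\dim Z=k$, the Stein factorization, and the finiteness of the stabilizer subtorus combined with Ueno's structure theorem \cite[Theorem 10.9]{U} — is correct and is in fact the same mechanism the paper uses to upgrade Theorem \ref{cast1} to Theorem \ref{cas2} (general type of $Y$). But to prove Theorem \ref{cast1} itself you must build the map to a $k$-dimensional base without presupposing the torus. The standard route (Catanese's) is foliation-theoretic: on the open set where $w_1\wedge\dots\wedge w_k\neq 0$ write $w_{k+1}=\sum_{i=1}^k f_i w_i$ with $f_i$ meromorphic; since $X$ is compact K\"ahler the $w_i$ are closed, whence $df_i\wedge w_1\wedge\dots\wedge w_k=0$, so the $f_i$ are first integrals of the integrable distribution cut out by $W$; one then shows the resulting meromorphic map is, after Stein factorization, a holomorphic fibration onto a normal $k$-fold $Y$ with $W\subset f^*H^0(Y,\Omega^1_Y)$. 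Only at that point does your $A_W$ (essentially $\mathrm{Alb}(Y)$) exist and your stabilizer argument become available. A small final point: the paper's definition of Albanese general type asks for Albanese dimension equal to $k$ and $q(Y)>k$; both follow at once from the finiteness of $Y\to Z\subset A_W$ and from the $k+1$ independent forms pulled back from $Y$, so your closing detour through Iitaka and the general type of the Albanese image is unnecessary (and argues for a different statement than the one required).
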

We recall the definition of Albanese general type which is also given by Catanese in \cite{Ca2}:
\begin{defn}
An irregular variety $Y$ of dimension $k$ is called of Albanese general type if its Albanese dimension $a$ equals $k$ and its irregularity is $q>k$.
\end{defn}
In the same paper, the author also introduces the notion of higher irrational pencil as follows:
\begin{defn}
A higher irrational pencil is a morphism with connected fibers $X\to Y$ with target a normal variety of Albanese general type.
A variety X admitting no higher irrational pencil is said to be Albanese primitive, or simply primitive.
\end{defn}

Note that higher irrational pencils can be seen as the higher dimensional analogues to fibrations over curves of genus $g\geq 2$.

Given these definitions, the Generalized Castelnuovo-de Franchis can be restated as follows:
\begin{thm}
Let $X$ be an $n$-dimensional smooth variety. $X$ is primitive if and only if the maps 
$$ 
\bigwedge^kH^0(X,\Omega^1_X)\to H^0(X,\Omega^k_X)
$$ are injective on decomposable elements, that is elements of the form $w_1\wedge\dots\wedge w_{k}$.
\end{thm}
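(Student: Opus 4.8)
The plan is to prove the two implications by contraposition, in both cases leaning on the Generalized Castelnuovo--de Franchis Theorem \ref{cast1}. First I would unwind the statement. Since a holomorphic form on the connected variety $X$ that vanishes on a dense open set vanishes identically, a decomposable element $w_1\wedge\dots\wedge w_k$ lies in the kernel of $\bigwedge^k H^0(X,\Omega^1_X)\to H^0(X,\Omega^k_X)$ exactly when the $k$-form $w_1\wedge\dots\wedge w_k$ is identically zero, while it is the zero element of $\bigwedge^k H^0(X,\Omega^1_X)$ exactly when $w_1,\dots,w_k$ are linearly dependent. Hence the maps are injective on decomposables for all $k$ if and only if no nonzero decomposable is killed, i.e. linearly independent $1$-forms never wedge to the zero $k$-form; this is the working formulation I would use throughout.

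For the implication that primitivity forces injectivity I argue contrapositively: if $X$ is not primitive it carries a higher irrational pencil $f\colon X\to Y$ with $Y$ normal of Albanese general type, say $\dim Y=k$ and $q(Y)>k$. Since $h^0(Y,\Omega^1_Y)=q(Y)>k$, I may pull back $k+1$ linearly independent $1$-forms to obtain $w_1,\dots,w_{k+1}\in H^0(X,\Omega^1_X)$; independence is preserved because $f$ is dominant, so $f^*$ is injective on global $1$-forms. On the other hand $w_1\wedge\dots\wedge w_{k+1}=f^*\alpha$ for a $(k+1)$-form $\alpha$ on the $k$-dimensional $Y$, so $\alpha=0$ and the wedge vanishes. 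This exhibits $k+1$ linearly independent $1$-forms with vanishing wedge, so $\bigwedge^{k+1}H^0(X,\Omega^1_X)\to H^0(X,\Omega^{k+1}_X)$ fails to be injective on decomposables.

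For the converse I again argue contrapositively: assume injectivity fails for some exponent, so some linearly independent tuple wedges to zero. I would pass to a minimal such relation, taking the least integer $m\geq 2$ for which there are linearly independent $w_1,\dots,w_m$ with $w_1\wedge\dots\wedge w_m=0$, and set $k=m-1$. Minimality guarantees that no $k$ linearly independent forms wedge to zero, so the hypotheses of Theorem \ref{cast1} are met for $w_1,\dots,w_{k+1}$, yielding a holomorphic map $f\colon X\to Y$ onto a normal variety $Y$ of dimension $k$ and of Albanese general type with $w_i\in f^*H^0(Y,\Omega^1_Y)$. The one point that is not automatic, and which I expect to be the main obstacle, is that Theorem \ref{cast1} does not itself produce connected fibers, whereas a higher irrational pencil requires them. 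I would remedy this by taking the Stein factorization $X\xrightarrow{g}Y'\xrightarrow{\pi}Y$, so that $g$ has connected fibers and $Y'$ is normal; it then remains to verify that $Y'$ is still of Albanese general type. Here the finite map $\pi$ preserves maximal Albanese dimension, and the forms $\pi^*\alpha_i$ descend the independent $1$-forms to $Y'$, forcing $q(Y')\geq k+1>k=\dim Y'$. Thus $g\colon X\to Y'$ is a higher irrational pencil, so $X$ is not primitive, which completes the equivalence.
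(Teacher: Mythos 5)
Your proposal is correct, and it is worth noting that the paper itself offers no proof of this statement at all: it is presented as an immediate restatement of the generalized Castelnuovo--de Franchis theorem (Theorem \ref{cast1}) combined with the definitions of higher irrational pencil and primitivity. Your argument makes precise exactly the two points that this ``immediate'' reformulation glosses over. First, in the converse direction you reduce to a \emph{minimal} length-$m$ wedge relation so that the side hypothesis of Theorem \ref{cast1} (no $k$ independent forms in the span wedging to zero) is actually satisfied; the paper never says how to arrange this. Second, and more substantively, you observe that Theorem \ref{cast1} as stated in the paper does not produce a map with connected fibers, while a higher irrational pencil requires them, and you repair this by Stein factorization $X\to Y'\to Y$, checking that the intermediate normal variety $Y'$ inherits Albanese general type (maximal Albanese dimension passes to finite covers, and pulling back the $k+1$ independent forms gives $q(Y')>k=\dim Y'$). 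In Catanese's original formulation the fibration already has connected fibers, which is presumably why the paper treats the equivalence as automatic, but relative to the statement of Theorem \ref{cast1} actually given in the paper your Stein factorization step is a genuine and necessary addition, not a redundancy. The forward direction (pulling back $k+1$ independent forms from the $k$-dimensional target, whose wedge vanishes for dimension reasons) is the standard argument; the only caveat is that for a normal $Y$ the identification $h^0(Y,\Omega^1_Y)=q(Y)$ deserves a word --- the clean way is to pull the forms back from the Albanese of a resolution of $Y$, which extends to a morphism on all of $X$ since $X$ is smooth --- but this does not affect the correctness of your reasoning.
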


\begin{rmk}
Note that it may very well be that these maps are not injective on linear combinations of decomposable elements.
\end{rmk}

For our purposes we will need a slightly different version of the Castelnuovo-de Franchis theorem:

\begin{thm}
\label{cas2}
Let $X$ as above and $w_1,\dots, w_l \in H^0 (X,\Omega^1_X)$ linearly independent 1-forms such that $w_{j_1}\wedge\dots\wedge w_{j_{k+1}}= 0$ for every $j_1,\dots,j_{k+1}$ and that no collection of $k$ linearly independent forms in the span of $w_1,\dots, w_{j_{k+1}}$ wedges to zero. Then there exists a holomorphic map $f\colon X\to Y$ over a normal variety $Y$ of dimension $\dim Y=k$ and such that $w_i\in f^*H^0(Y,\Omega^1_Y)$. Furthermore $Y$ is of general type.
\end{thm}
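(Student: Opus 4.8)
The plan is to deduce Theorem \ref{cas2} from the Generalized Castelnuovo--de Franchis Theorem \ref{cast1} in three movements: first extract the map $f$ and the base $Y$ from a minimal sub-collection of the forms, then show that the remaining forms also descend to $Y$, and finally upgrade the conclusion from Albanese general type to general type by exploiting the full collection. To begin I would apply Theorem \ref{cast1} to $w_1,\dots,w_{k+1}$: by hypothesis $w_1\wedge\dots\wedge w_{k+1}=0$ and no $k$ linearly independent forms in their span wedge to zero, so Theorem \ref{cast1} produces a holomorphic map $f\colon X\to Y$ onto a normal $k$-dimensional variety $Y$ of Albanese general type with $w_1,\dots,w_{k+1}\in f^{*}H^0(Y,\Omega^1_Y)$. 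Passing to the Stein factorization I may assume $f$ has connected fibers.

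Next I would show that \emph{every} $w_i$ lies in $f^{*}H^0(Y,\Omega^1_Y)$. Since no $k$ of the forms wedge to zero, $w_1\wedge\dots\wedge w_k$ is a nonzero $k$-form, so at a general point $x$ the covectors $w_1(x),\dots,w_k(x)$ are linearly independent; being pullbacks, they all lie in $\operatorname{im}(df_x^{*})$. The relation $w_1\wedge\dots\wedge w_k\wedge w_i=0$, a special case of the vanishing of all $(k{+}1)$-fold wedges, forces $w_i(x)\in\operatorname{span}(w_1(x),\dots,w_k(x))\subseteq\operatorname{im}(df_x^{*})$ at the general point; that is, $w_i$ annihilates the tangent spaces to the fibers of $f$. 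A holomorphic $1$-form vanishing on the fibers of a fibration with connected fibers descends to a holomorphic form on the (normal) base, which is the standard descent mechanism underlying Castelnuovo--de Franchis arguments, so $w_i\in f^{*}H^0(Y,\Omega^1_Y)$. Writing $w_i=f^{*}\tilde w_i$, the $\tilde w_i$ are $l$ linearly independent $1$-forms on $Y$; since $f^{*}$ is injective on $k$-forms, the hypothesis descends, so no $k$ linearly independent forms in $\operatorname{span}(\tilde w_i)$ wedge to zero on $Y$.

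Finally I would prove that $Y$ is of general type, arguing by contradiction. As $Y$ has maximal Albanese dimension, passing to a smooth model if necessary, I would invoke the structure theorem of Ueno and Kawamata for the Albanese image $Z\subseteq\operatorname{Alb}(Y)$: if $Y$ is \emph{not} of general type there is a nonzero abelian subvariety $T_0\subseteq\operatorname{Alb}(Y)$ of dimension $a\ge 1$ such that $Z$ is fibered by translates of $T_0$, and the image $W$ of $Z$ in $\operatorname{Alb}(Y)/T_0$ has dimension $k-a$. The $1$-forms pulled back from $\operatorname{Alb}(Y)/T_0$ form a subspace $U\subseteq H^0(Y,\Omega^1_Y)$ of codimension $a$, and any $k-a+1$ of them factor through $W$, hence wedge to zero. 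Intersecting $U$ with the $l$-dimensional span $S$ of the $\tilde w_i$ gives $\dim(S\cap U)\ge l-a\ge k-a+1$, where the hypothesis $l\ge k+1$ enters decisively. I would then choose $k-a+1$ linearly independent forms in $S\cap U$, which wedge to zero, and complete them with $a-1$ further elements of $S$ to a set of $k$ linearly independent forms in $S$ whose wedge vanishes, contradicting the descended hypothesis. Hence $T_0=0$ and $Y$ is of general type (compare the framework of \cite{Ca2}).

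The application of Theorem \ref{cast1} and the descent of the forms are essentially routine; the crux is the last movement, where the purely algebraic condition ``no $k$ forms in the span wedge to zero'' must be converted into the geometric statement of general type. The main obstacle is therefore to control the Albanese geometry of $Y$: the Ueno--Kawamata description of subvarieties of abelian varieties, combined with the count $\dim(S\cap U)\ge l-a$, is exactly what manufactures the extra wedge relation that excludes a positive-dimensional abelian fibration, and it is precisely here that having at least $k+1$ forms is indispensable.
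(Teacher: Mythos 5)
Your proposal is correct and follows essentially the same route as the paper's proof: apply Theorem \ref{cast1} to $w_1,\dots,w_{k+1}$, descend the remaining forms using the relation $w_1\wedge\dots\wedge w_k\wedge w_i=0$ against the nonvanishing wedge $w_1\wedge\dots\wedge w_k$, and then rule out a positive-dimensional subtorus ruling of the Albanese image via Ueno's structure theorem by producing $k$ linearly independent forms in the span that wedge to zero. The only differences are presentational: your appeal to the standard descent lemma replaces the paper's equivalent local computation with meromorphic coefficient functions, and your dimension count $\dim(S\cap U)\ge l-a\ge k-a+1$ usefully makes explicit the final contradiction that the paper merely asserts is possible.
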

\begin{proof}
We can take $w_1,\dots, w_{k+1}$ and apply Theorem \ref{cast1} to prove the existence of $Y$, then we have to prove that $w_{k+2},\dots,w_{l}$ also are pullback of forms of $Y$.

Without loss of generality we deal with $w_{k+2}$. 
Let $U$ a sufficiently small open subset of $X$ where $w_1\wedge\dots\wedge w_{k}\neq 0$. From $w_1\wedge\dots\wedge w_{k}\wedge w_{k+2}=0$ we obtain that $w_{k+2}=\sum_{i=1}^k f_iw_i$ with $f_i$ holomorphic functions on $U$. It is also clear that the $f_i$'s are pullback of holomorphic functions on $Y$, otherwise $dw_{k+2}$ would be different from zero which is not possible, $w_{k+2}$ being a holomorphic form.

From the equality 
$$
w_1\wedge\dots\wedge\widehat{w_i}\wedge\dots\wedge w_{k}\wedge w_{k+2}=(-1)^{k-i}f_iw_1\wedge\dots\wedge w_{k}
$$
we also get that $f_i$ are global meromorphic functions.

This discussion tells us that $w_{k+2}=f^*\alpha$ is the pullback of a meromorphic 1-form on $Y$, but since $w_{k+2}$ is holomorphic and the pullback does not remove the poles of $\alpha$, it follows that $\alpha$ is already holomorphic on $Y$ and this is our thesis.   

To prove that $Y$ is of general type we use a standard argument based on a structure theorem by Ueno, see \cite[Theorem 10.9]{U} or \cite[Theorem 3.7]{Mo}. 
We know by Theorem \ref{cast1} that $Y$ is of Albanese general type, that is its Albanese map $\text{alb}\colon Y\to \text{Alb}(Y)$ is generically finite on its image $\tilde{Y}$ and not surjective. 
By the above mentioned theorem the Albanese image $\tilde{Y}$ is ruled by positive dimensional subtori over a basis Z which is a subvariety of general type of an abelian variety. Since $\dim Z<\dim Y=k$ a contradiction follows since it is possible to find $k$ linearly independent 1-forms \textit{in the span} of the $w_i$'s with vanishing wedge product.  
\end{proof}
\begin{rmk}
\label{bastachiuse}
It is not difficult to check that the proof of the Castelnuovo-de Franchis also works if $X$ is not compact if we further assume that the 1-forms $w_i $ are closed.
\end{rmk}
\subsection{Massey trivial strictness and monodromy representations}

The theory developed in the previous section is obviously strictly related to the Castelnuovo-de Franchis theorem.

Let $W$ be a Massey trivial subspace of $\mD$ as in the previous section. Let $H$ be the kernel of the monodromy representation of $\mD$, which is a normal subgroup of $\pi_1(B, b)$, and call $H_W$ the subgroup of $H$ which acts trivially on W. For every subgroup $K<H_W$, we denote by $B_K\to B$ the \'{e}tale base change of group $K$ and by $X_K\to B_K$ the associated fibration. We have the following:
\begin{thm}
\label{castmassey}
Let $X\to B$ a semistable fibration with $\dim X=n$. Let $A\subset B$ be an open subset and $W\subset \Gamma(A,\mD)$ a Massey trivial strict subspace.
Then $X_K$ has a higher irrational pencil $h_K\colon X_K\to Y$ over a normal $(n-1)$-dimensional variety of general type $Y$ such that $W\subset h_K^*(H^0(Y,\Omega^1_Y))$. Furthermore if $W$ is a maximal Massey trivial strict subspace we have the equality $W= h_K^*(H^0(Y,\Omega^1_Y))$.
\end{thm}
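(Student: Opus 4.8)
The plan is to reduce the statement to an application of the Generalized Castelnuovo--de Franchis Theorem \ref{cas2} on the total space of a suitable base change of $f$. The obstruction to applying it directly on $X$ is that $W$ consists only of sections over the open set $A$, whereas Theorem \ref{cas2} needs \emph{global} closed $1$-forms, and the monodromy of $\mD$ is exactly what prevents $W$ from extending to $\Gamma(B,\mD)$. First I would pass to the \'etale cover $B_K\to B$ with $K<H_W\le H=\ker\rho_\mD$: since $\pi_1(B_K)=K$ acts trivially on $\mD$, the pulled-back local system $\mD_K$ on $B_K$ is trivial, so the sections in $W$ extend to global flat sections and give a subspace $W\subset\Gamma(B_K,\mD_K)$. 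Because an \'etale base change over a curve preserves smoothness and semistability and does not alter the fibers, $X_K\to B_K$ is again a semistable fibration, and the fiberwise notions of Massey triviality and strictness for $W$ are unchanged.

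Next I would globalize the adjoint construction. Applying the local-to-global Proposition \ref{localglobal} (in its $\mD$-version, valid because every section of $\mD$ lifts to a \emph{closed} holomorphic $1$-form) to the strict, Massey trivial subspace $W\subset\Gamma(B_K,\mD_K)$ produces a unique global lifting $\widetilde W\subset\Gamma(B_K,f_{K*}\Omega^1_{X_K,d})$, i.e.\ a space of global \emph{closed} holomorphic $1$-forms on $X_K$, with $\bigwedge^n\widetilde W\to\Gamma(B_K,f_{K*}\omega_{X_K})$ equal to zero. Writing $w_1,\dots,w_l$ for a basis of $\widetilde W$, these are linearly independent (their restrictions to a fiber are the linearly independent elements of $W$), every $n$-fold wedge $w_{j_1}\wedge\dots\wedge w_{j_n}$ vanishes, and by strictness (Definition \ref{strict}) no $(n-1)$-fold wedge of independent combinations vanishes. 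Thus the hypotheses of Theorem \ref{cas2} hold with $k=n-1$.

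I would then invoke Theorem \ref{cas2}, using Remark \ref{bastachiuse} to cover the case where $K$ has infinite index in $\pi_1(B)$ and $X_K$ is therefore non-compact; since the $w_i$ are closed, the conclusion persists. This yields a holomorphic map to a normal variety $Y$ of dimension $n-1$ and of general type with $w_i\in h_K^*H^0(Y,\Omega^1_Y)$; after Stein factorization I may assume $h_K\colon X_K\to Y$ has connected fibers, and since $Y$ is moreover of Albanese general type by Theorem \ref{cast1}, $h_K$ is a higher irrational pencil. Restricting the $w_i$ to a fiber gives $W\subset h_K^*(H^0(Y,\Omega^1_Y))$.

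Finally, for the maximality statement, I would show that $W':=h_K^*(H^0(Y,\Omega^1_Y))$ is itself a Massey trivial strict subspace of $\Gamma(A,\mD)$ containing $W$. It lies in $\mD$ because its elements restrict on fibers to $1$-forms that lift to the closed holomorphic forms $h_K^*(-)$; it is Massey trivial because any $n$ of its elements are pulled back from the $(n-1)$-dimensional $Y$, so their wedge vanishes as there is no nonzero $n$-form on $Y$; and it is strict because $\bigwedge^{n-1}H^0(Y,\Omega^1_Y)\hookrightarrow H^0(Y,\omega_Y)$ together with injectivity of $h_K^*$ gives the required injection. Maximality of $W$ then forces $W=W'$. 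The main obstacle I expect is the first step: making precise that passing to $B_K$ with $K\le H$ kills exactly the monodromy obstruction to extending $W$, and that Proposition \ref{localglobal} continues to apply after base change so as to produce \emph{global} closed forms on the possibly non-compact $X_K$. The dimension bookkeeping giving $k=n-1$ and the descent of $W'$ to $\Gamma(A,\mD)$ needed for maximality are comparatively routine once this is in place.
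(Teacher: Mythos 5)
Your proposal follows essentially the same route as the paper's proof: pass to the étale cover $B_K$ (where $K\le H$ kills the monodromy, so $W$ becomes a space of global sections), apply Proposition \ref{localglobal} to get global \emph{closed} $1$-forms on $X_K$ whose $n$-fold wedges vanish, and then invoke the Castelnuovo--de Franchis Theorem \ref{cas2} together with Remark \ref{bastachiuse} to handle non-compact $X_K$. Your additional verification of the CdF hypotheses and your sketch of the maximality statement (which the paper leaves implicit) are consistent elaborations of that same argument.
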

\begin{proof}
Since the proof is the same for every $K$, we prove the statement for $K=H_W$.
 
The key point is that since $H_W$ acts trivially on $W$, with the base change $B_{H_W}\to B$ the sections of $W$ give raise to global sections on $B_{H_W}$. More precisely call $\mD_W$ the local system on $B_{H_W}$ obtained by inverse image of $\mD$, then $W$ extends to global sections in $\Gamma(B_{H_W},\mD_W)$.
We can then apply Proposition \ref{localglobal} and find global 1-forms of $X_W$ which satisfy the hypotheses of the Castelnuovo-de Franchis Theorem \ref{cas2}.
Note that even in the case $X_{H_W}$ not compact, the proof works because by Proposition \ref{localglobal} we find closed 1-forms and the Castelnuovo-de Franchis still works as pointed out in Remark \ref{bastachiuse}.
\end{proof}
\subsection{Closures under the monodromy action}
A subspace $W\subset \Gamma(A,\mD)$ as in the above theorem naturally generates a local system in $\mD$ by taking the closure of $W$ under the monodromy action.
More precisely, denote by $\rho$ the monodromy map associated with $\mD$ and by $G=\pi_1(B, b)/\ker \rho$ the monodromy group acting non-trivially on $\mD$. 
The local system generated by $W$ is by definition the local system with stalk $\widehat{W}=\sum_{g\in G}g\cdot W$. We will denote it by $\mW$.
\begin{defn}
\label{mastrivgen}
If $W$ is Massey trivial, we will say that $\mW$ is Massey trivial generated.
\end{defn} See \cite[Definition 5.5]{PT}.

To the local system $\mW$ we associate its monodromy group as follows. The action $\rho$ of the fundamental group $\pi_1(B, b)$ on the stalk of $\mD$ restricts to an action $\rho_W$ on the stalk of $\mW$, that is 
$$
\rho_\mW\colon \pi_1(B, b)\to \text{Aut}(\widehat{W}).
$$

We will now construct an action of the monodromy group $G_\mW=\pi_1(B, b)/\ker \rho_\mW\cong \Ima \rho_\mW$ on a suitable set which will allow us to study this group. Sometimes we will denote $\ker \rho_\mW$ by $H_\mW$, so that $G_\mW=\pi_1(B,b)/H_\mW$.

Let $u_\mW\colon B_\mW\to B$ the covering classified by the subgroup $H_\mW$ and $f_\mW\colon X_\mW\to B_\mW$ the associated fibration. In a similar way to that seen before, the inverse image of the local system $\mW$ on $B_\mW$ is trivial and we will often identify the sections of $W$ and their unique liftings provided by Proposition \ref{localglobal}, which are global closed 1-forms on $X_\mW$.

By Theorem \ref{castmassey} applied to the subgroup $H_\mW$ of $H_W$ we get a map $h\colon X_{\mW}\to Y$ which can be composed with the action of $G_\mW$ on $X_\mW$ obtained from the standard action of $G_\mW$ on $B_\mW$. We call $h_g$ the composition.

\begin{equation}
\xymatrix{
X_\mW\ar^{g}[r]\ar_{h_g}[rrd]&X_\mW\ar^{h}[rd]&\\
&&Y
}
\end{equation}
Now we take a regular point $b\in A\subset B$ and a preimage $b_0$ of $b$ via $B_\mW\to B$. Denote by $F_0$ the fibre of $f_\mW$ over $b_0$ and consider the above diagram restricted to $F_0$.

\begin{equation}
\xymatrix{
F_0\ar@{^{(}->}[r]\ar_{k_g}[drrr]&X_\mW\ar^{g}[r]\ar^{h_g}[rrd]&X_\mW\ar^{h}[rd]&\\
&&&Y
}
\end{equation}

\begin{rmk}
The map $F_0\to Y$ is surjective since the pullback of the 1-forms of $Y$ gives on $F_0$ the subspace $W$ which we recall is strict by hypothesis.
\end{rmk}

We can define two sets of functions 
$$
\mathcal{H}=\{h_g\colon X_\mW\to Y\mid g\in G_\mW\}
$$ and
$$
\mathcal{K}=\{k_g\colon F_0\to Y\mid g\in G_\mW\}
$$ and $G_\mW$ acts naturally on both by $g_1\cdot h_{g_2}=h_{g_2g_1}$ and $g_1\cdot k_{g_2}=k_{g_2g_1}$. We will focus in particular on the action $G_\mW\times \sK\to \sK$ and prove that it is faithful.
We start with the following lemma see \cite[Lemma 6.1]{PT}.
\begin{lem}
\label{formula1}
Let $e$ be the neutral element of $G_\mW$ and $\alpha\in H^0(Y,\Omega^1_Y)$. Then for each $g\in G_\mW$,
\begin{equation}
k_g^*(\alpha)=g^{-1}k_e^*(\alpha)
\label{formula}
\end{equation}
where $g^{-1}$ acts on $k_e^*(\alpha)\in W$ via the monodromy action defining $\mW$.
\end{lem}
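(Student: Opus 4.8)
The plan is to unwind the definition of $k_g$ and reduce the statement to the behaviour of a single global $1$-form under the deck transformation $g$. By construction $k_g=h\circ g\circ\iota$, where $\iota\colon F_0\hookrightarrow X_\mW$ is the inclusion and $g\colon X_\mW\to X_\mW$ is the automorphism induced by the deck transformation $g\in G_\mW$ of the Galois covering $u_\mW\colon B_\mW\to B$. Hence $k_g^*\alpha=\iota^*g^*h^*\alpha=(g^*h^*\alpha)|_{F_0}$, and in particular $k_e^*\alpha=(h^*\alpha)|_{F_0}$, so that \eqref{formula} becomes a comparison of the two restrictions $(g^*h^*\alpha)|_{F_0}$ and $(h^*\alpha)|_{F_0}$. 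Since $W\subset h^*H^0(Y,\Omega^1_Y)$, the form $h^*\alpha$ is a global \emph{closed} holomorphic $1$-form on $X_\mW$ whose restriction to each fibre of $f_\mW$ lies in the corresponding stalk of $u_\mW^*\mW$; by Proposition \ref{localglobal} it thus defines a flat global section $s$ of the trivial local system $u_\mW^*\mW$ on $B_\mW$, normalised so that $s(b_0)=(h^*\alpha)|_{F_0}=k_e^*\alpha\in\widehat W$.

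Next I would analyse $(g^*h^*\alpha)|_{F_0}$ geometrically. As $g$ is a deck transformation, it commutes with the natural projection $X_\mW\to X$, so it carries $F_0$ isomorphically onto the fibre $F_{g(b_0)}$; moreover, under the canonical identifications of both $F_0$ and $F_{g(b_0)}$ with $F_b$ induced by $X_\mW\to X$, the map $g|_{F_0}$ is the identity. Consequently $(g^*h^*\alpha)|_{F_0}$ is identified with $(h^*\alpha)|_{F_{g(b_0)}}$, i.e. with the value $s(g(b_0))$ of the flat section at the point $g(b_0)$. Finally, flatness of $s$ together with the definition of the monodromy allows one to transport $s(g(b_0))$ back to $b_0$: lifting a loop $\gamma$ at $b$ to a path from $b_0$ to $g(b_0)$ and using the isomorphism $G_\mW\cong\pi_1(B,b)/H_\mW$, one finds that $\gamma$ represents $g^{-1}$, whence $s(g(b_0))=\rho_\mW(g^{-1})\bigl(s(b_0)\bigr)=g^{-1}\cdot k_e^*\alpha$. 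Chaining the three identifications yields $k_g^*\alpha=g^{-1}\cdot k_e^*\alpha$.

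The reduction in the first paragraph is routine; the step that requires genuine care, and which I expect to be the main obstacle, is the convention bookkeeping in the last paragraph. One has to fix simultaneously the normalisation of the monodromy representation $\rho_\mW$, the direction of the isomorphism $G_\mW\cong\pi_1(B,b)/H_\mW$, and the orientation of parallel transport, so that the contravariance of $g^*$ produces exactly $g^{-1}$ and not $g$. This inverse is the expected one: it mirrors the precomposition rule $g\cdot k_{g'}=k_{g'g}$ defining the $G_\mW$-action on $\sK$, under which $k_e\mapsto k_g$ is implemented by pulling back along $g$.
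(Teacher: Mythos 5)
Your proposal is correct and follows essentially the same route as the paper: both arguments pull $\alpha$ back to the global closed $1$-form $\beta=h^*\alpha$ on $X_\mW$, identify $k_g^*\alpha=(g^*\beta)|_{F_0}$ with the value of the flat section $b\mapsto \beta|_{F_b}$ at the $g$-translated base point, and recognize that value as the monodromy image $g^{-1}\cdot k_e^*\alpha$. The paper's proof is just a two-line version of this; your extra care with the deck-transformation identifications and the $g$ versus $g^{-1}$ bookkeeping fills in exactly what the paper leaves implicit.
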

\begin{proof}
Let $\beta=h_e^*(\alpha)$ the global closed 1-form in $X_\mW$ obtained by pullback. Clearly 
$$
k_g^*(\alpha)=(g^*h_e^*(\alpha))_{|F}=g^*\beta_{|F}=\beta_{|F_{g^{-1}{b}}}
$$
On the other hand
$$
g^{-1}k_e^*(\alpha)=g^{-1}\beta_{|F}=\beta_{|F_{g^{-1}{b}}}.
$$ 
\end{proof}

\begin{lem}
\label{faithful}
The action of $G_\mW$ on $\sK$ is faithful.
\end{lem}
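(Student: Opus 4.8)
The plan is to show that any $g\in G_\mW$ which acts trivially on $\sK$ must in fact act as the identity on the whole stalk $\widehat{W}$ of the local system $\mW$; since by construction $G_\mW=\pi_1(B,b)/\ker\rho_\mW$ embeds into $\Aut(\widehat{W})$ via $\rho_\mW$, this forces $g=e$, which is exactly faithfulness. The bridge between the geometric action on $\sK$ and the linear monodromy action on $\widehat{W}$ is the pullback formula of Lemma \ref{formula1}, so the entire argument is really a manipulation of that single identity.

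First I would unwind the definition of the action. Since $g$ acts on $\sK$ by $g\cdot k_{g'}=k_{g'g}$, the statement that $g$ acts trivially means $k_{g'g}=k_{g'}$ as maps $F_0\to Y$ for every $g'\in G_\mW$, and hence $k_{g'g}^*(\alpha)=k_{g'}^*(\alpha)$ for every $\alpha\in H^0(Y,\Omega^1_Y)$ and every $g'$. Now I would insert Lemma \ref{formula1}: writing $k_{g'g}^*(\alpha)=(g'g)^{-1}k_e^*(\alpha)=g^{-1}(g')^{-1}k_e^*(\alpha)$ and $k_{g'}^*(\alpha)=(g')^{-1}k_e^*(\alpha)$, the triviality of the action is equivalent to the family of identities $g^{-1}\cdot\big((g')^{-1}k_e^*(\alpha)\big)=(g')^{-1}k_e^*(\alpha)$, valid for all $g'\in G_\mW$ and all $\alpha\in H^0(Y,\Omega^1_Y)$.

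Next I would identify the vectors appearing here. By the Remark preceding the lemma, $k_e^*\big(H^0(Y,\Omega^1_Y)\big)=W$, so as $g'$ runs over $G_\mW$ and $\alpha$ over $H^0(Y,\Omega^1_Y)$ the elements $(g')^{-1}k_e^*(\alpha)$ run over $\bigcup_{h\in G_\mW}h\cdot W$, whose linear span is exactly $\widehat{W}=\sum_{h\in G_\mW}h\cdot W$. Thus the identities above say precisely that $g^{-1}$ fixes a generating set of $\widehat{W}$ under the monodromy action, whence $g^{-1}$, and therefore $g$, acts as the identity on all of $\widehat{W}$. Because $\rho_\mW$ is faithful on $G_\mW$ by definition of $G_\mW$, we conclude $g=e$.

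The only genuinely delicate point, and the step I would verify most carefully, is the spanning claim of the previous paragraph: it is what upgrades ``$g^{-1}$ fixes $W$'' (which already follows from the single instance $g'=e$) to ``$g^{-1}$ fixes $\widehat{W}$''. This is exactly why one must use that $g$ acts trivially on the \emph{entire} set $\sK$ rather than merely fixing $k_e$, and it relies on $W$ generating $\mW$ under the monodromy action, i.e. on $\mW$ being Massey trivial generated in the sense of Definition \ref{mastrivgen}. I would also keep careful track of the left/right conventions in $g\cdot k_{g'}=k_{g'g}$ and of the inverse in Lemma \ref{formula1}, to be sure that $\{(g')^{-1}k_e^*(\alpha)\}$ really sweeps out the full generating set $\bigcup_{h}h\cdot W$ and not a proper subset.
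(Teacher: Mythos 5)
Your proposal is correct and is essentially the paper's own argument in contrapositive form: the paper takes $g\neq e$, picks an element $\hat{g}w\in\widehat{W}$ not fixed by $g$, and uses Lemma \ref{formula1} with $g'=\hat{g}^{-1}$ to exhibit $k_{g'}$ with $g\cdot k_{g'}\neq k_{g'}$, which is exactly your spanning argument read backwards. Both proofs rest on the same two ingredients — the pullback identity of Lemma \ref{formula1} and the identification of $W$ with $k_e^*H^0(Y,\Omega^1_Y)$ together with the fact that the translates $h\cdot W$, $h\in G_\mW$, span $\widehat{W}$ — so there is no substantive difference.
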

\begin{proof}
Take $g\in G_\mW$, $g\neq e$ and we prove that there exist an element $k_{g'}$ of $\sK$ such that $g\cdot k_{g'}\neq k_{g'}$. Since by definition of the action we have $g\cdot k_{g'}=k_{g'g}$ we have to prove that 
$$
k_{g'}\colon F_0\to Y \quad\text{and}\quad k_{g'g}\colon F_0\to Y
$$ are different morphisms. We will prove this statement at the level of 1-forms, more precisely we prove that 
$$
k_{g'}^*\colon H^0(Y,\Omega^1_Y)\to H^0(F_0,\Omega^1_{F_0}) \quad\text{and}\quad k_{g'g}^*\colon H^0(Y,\Omega^1_Y)\to H^0(F_0,\Omega^1_{F_0})
$$ are different.

Now since $g\neq e$, there exist an element of $\mW$ which is not fixed by $g$, and, since $\mW$ is the local system generated by $W$, we can assume that this element is of the form $\hat{g}w$ with $w\in W$ and $\hat{g}\in G$, that is $g\hat{g}w\neq \hat{g}w$.

 By Theorem \ref{castmassey}, $w=k_e^*(\alpha)$ for some $\alpha \in H^0(Y,\Omega^1_Y)$, and by the previous Lemma we obtain 
$$
k_{g'}^*(\alpha)={g'}^{-1}k_e^*(\alpha)={g'}^{-1}w
$$ and
$$
k_{g'g}^*(\alpha)=({g'g})^{-1}k_e^*(\alpha)=({g'g})^{-1}w=g^{-1}g'^{-1}w
$$
Hence the thesis follows by taking $g'=\hat{g}^{-1}$.
\end{proof}
\subsection{Massey trivial generation and finiteness of monodromy groups}
We can now state the main theorem of this section:

\begin{thm}
\label{monfin}
Let $f \colon X \to B$ be a semistable fibration on a smooth projective curve B and let $\mW<\mD$ be a strict Massey trivial generated local system.
Then the associated monodromy group $G_\mW$ is finite and the fiber of $\mW$ is isomorphic to 
$$
\sum_{g\in G_\mW} k_{g}^*H^0(Y,\Omega^1_Y).
$$
\end{thm}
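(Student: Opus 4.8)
The plan is to reduce the finiteness of $G_\mW$ to the finiteness of the set $\sK$. Indeed, Lemma \ref{faithful} shows that $G_\mW$ acts faithfully on $\sK$, so $G_\mW$ embeds into the group of permutations of $\sK$; once we know that $\sK$ is a finite set, the finiteness of $G_\mW$ is immediate. In this way the entire problem is transferred to a counting statement about the morphisms $k_g\colon F_0\to Y$.

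To bound $\sK$ I would argue as follows. Every $k_g$ is a morphism from the \emph{fixed} smooth projective variety $F_0$ to the \emph{fixed} variety $Y$, and by the Remark establishing the surjectivity of $F_0\to Y$ it is in fact surjective, since the strict subspace $W$ is recovered as the pullback of $H^0(Y,\Omega^1_Y)$. As $\dim F_0=\dim Y=n-1$, each $k_g$ is therefore a dominant, generically finite morphism. Now $Y$ is of general type by Theorem \ref{castmassey}, so I would invoke the higher-dimensional de Franchis theorem of Kobayashi and Ochiai, which asserts that the set of dominant meromorphic maps from a fixed compact complex variety to a variety of general type is finite. Applied with source $F_0$ and target $Y$, this gives $\#\sK<\infty$, hence $G_\mW$ is finite. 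I expect this to be the main obstacle and the essential new ingredient compared with \cite{PT}: in the surface case $F_0$ and $Y$ are curves of genus $\geq 2$ and one only needs the classical de Franchis--Severi finiteness, whereas here the genuinely higher-dimensional statement is required.

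It remains to identify the stalk $\widehat{W}$ of $\mW$. Here I would use that, taking the generator maximal and applying the maximality clause of Theorem \ref{castmassey}, we may assume $W=k_e^*H^0(Y,\Omega^1_Y)$. Lemma \ref{formula1} gives, for every $\alpha\in H^0(Y,\Omega^1_Y)$ and every $g\in G_\mW$, the relation $k_g^*(\alpha)=g^{-1}k_e^*(\alpha)$, whence $k_g^*H^0(Y,\Omega^1_Y)=g^{-1}\cdot W$. Summing over $g\in G_\mW$ and using that $g\mapsto g^{-1}$ is a bijection of the finite group $G_\mW$,
\[
\sum_{g\in G_\mW}k_g^*H^0(Y,\Omega^1_Y)=\sum_{g\in G_\mW}g\cdot W=\widehat{W},
\]
the last equality being the definition of the stalk of the local system generated by $W$; note that the sum defining $\widehat{W}$ ranges a priori over the monodromy group of $\mD$, but collapses to a sum over $G_\mW$ because that action factors through $G_\mW$. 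This yields the asserted isomorphism.

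The delicate point in this last step, which I would treat carefully, is precisely the identification $W=k_e^*H^0(Y,\Omega^1_Y)$. The inclusion $W\subseteq k_e^*H^0(Y,\Omega^1_Y)$ is automatic, but it is the reverse inclusion that guarantees $k_g^*H^0(Y,\Omega^1_Y)=g^{-1}\cdot W\subseteq\widehat{W}$, and this reverse inclusion is exactly the maximality assertion of Theorem \ref{castmassey}. One must therefore ensure that $W$ is chosen maximal among Massey trivial strict subspaces, so that no $1$-form pulled back from $Y$ escapes $\widehat{W}$. Granting this, both inclusions hold and the proof is complete.
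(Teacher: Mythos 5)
Your proposal follows the paper's proof essentially step for step: finiteness of $G_\mW$ via the faithful action of Lemma \ref{faithful} on $\sK$, finiteness of $\sK\subset\mathrm{Mor}(F_0,Y)$ via the Kobayashi--Ochiai theorem (the paper cites exactly \cite[Theorem 1]{kob}), and the stalk identity via Lemma \ref{formula1} together with the collapse of $\sum_{g\in G}g\cdot W$ to a sum over $G_\mW$ because $H_\mW$ fixes $W$.

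The one point where you go beyond the paper is the maximality discussion, and it is well taken: the paper's final equality $\sum_{g\in G_\mW}g\cdot W=\sum_{g\in G_\mW}k_g^*H^0(Y,\Omega^1_Y)$ does silently use $W=k_e^*H^0(Y,\Omega^1_Y)$ (this is already built into the phrasing of Lemma \ref{formula1}, where $k_e^*(\alpha)$ is asserted to lie in $W$), and without it only the inclusion $\widehat W\subseteq\sum_{g}k_g^*H^0(Y,\Omega^1_Y)$ is automatic. One caveat on your fix: replacing $W$ by a maximal Massey trivial strict subspace containing it is not a harmless reduction, because enlarging the generator can enlarge the generated local system $\mW$ and change $Y$ --- for instance, when $G_\mW$ is trivial the asserted formula literally forces $W=k_e^*H^0(Y,\Omega^1_Y)$, which fails for a non-maximal generator (take $W$ a proper subspace of the pullback of the $1$-forms of an irrational pencil). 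So what your argument establishes is the statement for $\mW$ generated by a \emph{maximal} Massey trivial strict subspace, which is the reading under which the paper's own proof (and Lemma \ref{formula1}) is complete; the finiteness of $G_\mW$ needs no such assumption in either version.
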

\begin{proof}
By the previous result Lemma \ref{faithful}, we have an inclusion 
$$
G_\mW\hookrightarrow \text{Aut}(\sK)
$$ hence it is enough to show that $\sK$ is a finite set. $\sK$ is contained in $\text{Mor}(F_0,Y)$ the set of all surjective morphisms from $F_0$ to $Y$ and this is finite being $Y$ of general type, see for example \cite[Theorem 1]{kob}

For the result on the stalk of $\mW$, recall that this stalk is $\widehat{W}=\sum_{g\in G}g\cdot W$. Hence we have
$$
\widehat{W}=\sum_{g\in G}g\cdot W=\sum_{g\in G_\mW}g\cdot W=\sum_{g\in G_\mW} k_{g}^*H^0(Y,\Omega^1_Y)
$$where the second equality comes from the fact that $H_\mW$ fixes $W$ and the last from Lemma \ref{formula1}.
\end{proof}

Now if $\mD$ itself is Massey trivial generated we have the immediate corollary:
\begin{cor}
\label{fin2}
If $\mD$ is Massey trivial generated by a strict subspace, then the monodromy group $G$ is finite.
If furthermore the map $\bigwedge^{n-1}\mD\to \mD^{n-1}$ is surjective, the local system $\mD^{n-1}$ also has finite monodromy. 
\end{cor}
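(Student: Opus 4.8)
The plan is to derive both assertions directly from Theorem \ref{monfin}, together with the elementary behaviour of monodromy under wedge powers and under surjections of local systems.

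For the first assertion I would simply match definitions. To say that $\mD$ is Massey trivial generated by a strict subspace means precisely that $\mD$ itself is a strict Massey trivial generated local system in the sense of Theorem \ref{monfin}, namely the case $\mW=\mD$. In that case the monodromy representation $\rho_\mW$ coincides with $\rho$ and hence $G_\mW=G$, so the finiteness of $G$ is exactly the conclusion of Theorem \ref{monfin}. No further work is needed here.

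For the second assertion the key observation is that $\phi\colon\bigwedge^{n-1}\mD\to\mD^{n-1}$ is a morphism of local systems (it is the wedge map (\ref{wedge}) restricted to classes liftable to closed holomorphic forms, as observed in Section \ref{sezioneaggiunta}), hence it is equivariant for the two monodromy actions. Writing $\sigma$ for the monodromy representation of $\mD^{n-1}$, I would then argue in two steps. First, the monodromy of $\bigwedge^{n-1}\mD$ is the representation $\bigwedge^{n-1}\rho$; since $\bigwedge^{n-1}\rho(\gamma)$ depends only on $\rho(\gamma)\in G$, this representation factors through $G$, so its image is a quotient of the finite group $G$ and is therefore finite. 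Second, equivariance gives, for every $\gamma\in\pi_1(B,b)$, the relation $\sigma(\gamma)\circ\phi_b=\phi_b\circ\bigwedge^{n-1}\rho(\gamma)$ on the stalk; thus if $\gamma$ acts trivially on $\bigwedge^{n-1}\mD$ then $\sigma(\gamma)$ fixes the image of $\phi_b$, which is all of $\mD^{n-1}_b$ by surjectivity, so $\sigma(\gamma)=\mathrm{id}$. Hence $\ker(\bigwedge^{n-1}\rho)\subseteq\ker\sigma$, and the monodromy group of $\mD^{n-1}$ is a quotient of the finite monodromy group of $\bigwedge^{n-1}\mD$, so it too is finite.

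I do not expect a genuine obstacle: the statement is formal once Theorem \ref{monfin} is in hand. The only point requiring a moment's care is the equivariance of $\phi$, which is automatic because $\phi$ was constructed as a natural map of local systems; one may also recall that $\mD^{n-1}$ is the local system attached to $\sU$ by Theorem \ref{secfujita}, so that finiteness of its monodromy is precisely the semi-ampleness statement advertised in the introduction.
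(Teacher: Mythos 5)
Your proof is correct and follows essentially the same route as the paper: the first claim is the case $\mW=\mD$ of Theorem \ref{monfin}, and the second transfers finiteness of $G$ to $\mD^{n-1}$ through the surjection $\bigwedge^{n-1}\mD\to\mD^{n-1}$. If anything, your version is slightly more precise on the group theory: the paper states that the monodromy group of $\mD^{n-1}$ is a \emph{subgroup} of that of $\bigwedge^{n-1}\mD$, whereas your equivariance argument correctly exhibits it as a \emph{quotient} (via $\ker(\bigwedge^{n-1}\rho)\subseteq\ker\sigma$) — either way finiteness follows.
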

\begin{proof}
The first statement is immediate by Theorem \ref{monfin}. 

For the second it is enough to note that if $\bigwedge^{n-1}\mD\to \mD^{n-1}$ is a surjective map of local systems, then the monodromy group of $\mD^{n-1}$ is a subgroup of the monodromy group of $\bigwedge^{n-1}\mD$ and latter is finite by the first statement.
\end{proof}

To give an example where $\mD$ is Massey trivial generated, we can extend to any dimension the case of hyperelliptic fibration studied in \cite{PT}:

\begin{prop}
\label{hyper}
Let $X\to B$ a semistable fibration such that the general fiber $F$ has odd dimension and has an involution $\sigma$ such that $F/\sigma$ has $p_g=0$.
If $\mD$ is generated by anti-invariant 1-forms, then $\mD$ is Massey trivial generated.
\end{prop}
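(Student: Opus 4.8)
The plan is to prove directly that \emph{every} $n$-tuple of local sections of $\mD$ is Massey trivial; granting this, the full stalk $W=\Gamma(A,\mD)$ over a contractible $A\subset B$ is a Massey trivial subspace which trivially generates $\mD$ under the monodromy action, so $\mD$ is Massey trivial generated in the sense of Definition \ref{mastrivgen}. The whole argument rests on a parity computation, exactly as in the surface case of \cite{PT}, and the crucial numerical input is that $\dim F=n-1$ is odd, i.e. that $n$ is even.

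First I would make the hypotheses precise at the level of a \emph{relative} involution. I interpret $\sigma$ as an involution $\sigma\colon X\to X$ with $f\circ\sigma=f$ inducing on the general fiber the given involution (this is the analogue of the canonical hyperelliptic involution in \cite{PT}). The assumption that $\mD$ is generated by anti-invariant $1$-forms then means that the stalk of $\mD$ lies in the anti-invariant eigenspace $H^{1,0}(F)^-$, so that $\sigma^*\eta=-\eta$ for every local section $\eta$ of $\mD$. Likewise the condition $p_g(F/\sigma)=0$ says precisely that the $\sigma$-invariant part of the holomorphic top forms vanishes,
\begin{equation*}
H^0(F,\omega_F)^{+}=0,
\end{equation*}
i.e. every holomorphic top form on $F$ is $\sigma$-anti-invariant.

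Next I would run the pointwise Massey product construction of Section \ref{sezioneaggiunta} equivariantly. Fix $n$ local sections $\eta_1,\dots,\eta_n$ of $\mD$ and, over a general fiber, choose liftings $s_i\in H^0(F,\Omega^1_{X|F})$ of the $\eta_i$ as in (\ref{kerxi}). Since the restriction map $\Omega^1_{X|F}\to\Omega^1_F$ of (\ref{solita}) is $\sigma$-equivariant and each $\eta_i$ is anti-invariant, replacing $s_i$ by its anti-invariant part $\tfrac12(s_i-\sigma^*s_i)$ yields a form that still restricts to $\eta_i$ and satisfies $\sigma^*s_i=-s_i$. Now form $\Omega=s_1\wedge\cdots\wedge s_n\in H^0(F,\omega_{X|F})$: being the wedge of $n$ anti-invariant forms with $n$ even, it is $\sigma$-invariant. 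Under the adjunction isomorphism $\omega_{X|F}\cong\omega_F$ — which is $\sigma$-equivariant because $\sigma$, being a relative involution, acts as the identity on $T_{B,b}$ — the form $\Omega$ corresponds to a $\sigma$-invariant top form on $F$, hence lies in $H^0(F,\omega_F)^{+}=0$. Therefore $\Omega=0$, so the adjoint form $m_{\xi_b}(\eta_1,\dots,\eta_n)$ vanishes, and in particular lies in the submodule $\sW$ of Definition \ref{omegai}; the $\eta_i$ are Massey trivial.

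Finally, since this holds for every $n$-tuple, the entire stalk is Massey trivial, and glueing over a contractible $A$ (the pointwise adjoint forms glue to a section of $f_*\omega_{X/B}$ modulo $\sW$, as recalled in Section \ref{sezioneaggiunta}) shows that $W=\Gamma(A,\mD)$ is a Massey trivial subspace generating $\mD$, which is exactly Massey trivial generation. The only genuinely delicate points are the two equivariance claims — that the liftings may be chosen anti-invariant and that $\omega_{X|F}\cong\omega_F$ is $\sigma$-equivariant — together with the observation that the Massey product is well defined modulo $\sW$ independently of the lifting, so that exhibiting one vanishing representative $\Omega$ suffices. Both equivariances are routine once $\sigma$ is taken to be a relative involution, and the real content of the proposition is the parity bookkeeping forced by the oddness of $\dim F$.
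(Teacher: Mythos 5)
Your proof is correct and takes essentially the same approach as the paper: a $\sigma$-equivariance parity argument combining the anti-invariance of the $\eta_i$, the evenness of $n$ (i.e.\ oddness of $\dim F$), and the translation of $p_g(F/\sigma)=0$ into the vanishing of the $\sigma$-invariant part of $H^0(F,\omega_F)$. The only difference is cosmetic: you symmetrize the liftings so that $\Omega=s_1\wedge\dots\wedge s_n$ vanishes on the nose, whereas the paper applies $\sigma^*$ directly to the Massey product, obtaining $\sigma^*m_{\xi_b}(\eta_1,\dots,\eta_n)=-m_{\xi_b}(\eta_1,\dots,\eta_n)$ from $p_g(F/\sigma)=0$ and $\sigma^*m_{\xi_b}(\eta_1,\dots,\eta_n)=m_{\xi_b}(-\eta_1,\dots,-\eta_n)=m_{\xi_b}(\eta_1,\dots,\eta_n)$ from anti-invariance and $n$ even, which forces the same vanishing.
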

\begin{proof}
Take $\eta_1,\dots,\eta_n\in \mD_b\subset H^0(F,\Omega^1_F)$ and construct their Massey product $m_{\xi_b}(\eta_1,\dots,\eta_n)$. By the fact $\dim F=n-1$ is odd, we get that the Massey product is antisymmetric.

The involution $\sigma$ gives a map $\sigma^*\colon H^0(\omega_F)\to H^0(\omega_F)$ which is the multiplication by $-1$. Hence we get
$$
\sigma^*m_{\xi_b}(\eta_1,\dots,\eta_n)=-m_{\xi_b}(\eta_1,\dots,\eta_n)
$$ while at the same time
$$
\sigma^*m_{\xi_b}(\eta_1,\dots,\eta_n)=m_{\xi_b}(-\eta_1,\dots,-\eta_n)
$$ by the fact that the $\eta_i$'s are anti-invariant.
Since $n$ is even, this gives the desired vanishing of the Massey product.
\end{proof}

To close this section we recall that the finiteness of the monodromy group of a local system is equivalent to the semi-ampleness of the unitary flat vector bundle, see for example \cite[Theorem 2.5]{CD1}.
\begin{cor}
Under the hypotheses of Corollary \ref{fin2}, the unitary flat bundle of the second Fujita decomposition is semi-ample.
\end{cor}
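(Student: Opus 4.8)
The plan is to assemble three ingredients that are already available: the identification of the unitary summand $\sU$ as the flat bundle attached to $\mD^{n-1}$, the finiteness statement of Corollary~\ref{fin2}, and the known dictionary between finite monodromy and semi-ampleness. First I would recall from Theorem~\ref{secfujita} that the second Fujita decomposition reads $\omega_{X/B}=\sU\oplus\sA$ with $\sA$ ample and, crucially, $\sU=\mD^{n-1}\otimes\sO_B$. In other words, $\sU$ is precisely the unitary flat vector bundle associated, under the correspondence between flat bundles, local systems and representations recalled in the Introduction, to the local system $\mD^{n-1}$. This identification is exactly the content of Theorem~\ref{secfujita}, so it can be cited rather than reproved.

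Next I would invoke Corollary~\ref{fin2}. Under its hypotheses, namely that $\mD$ is Massey trivial generated by a strict subspace and that the wedge map $\bigwedge^{n-1}\mD\to\mD^{n-1}$ is surjective, the monodromy group of $\mD^{n-1}$ is finite. Hence the representation of $\pi_1(B,b)$ defining $\sU$ factors through a finite group, so the monodromy of the unitary flat bundle $\sU$ is finite.

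Finally I would apply the equivalence recalled immediately before the statement, \cite[Theorem 2.5]{CD1}, which asserts that for a unitary flat vector bundle on $B$ the finiteness of the monodromy group is equivalent to semi-ampleness. Since the monodromy of $\mD^{n-1}$, and hence of $\sU$, is finite, it follows that $\sU$ is semi-ample, which is the assertion. There is no genuine obstacle here: the corollary is a direct formal consequence of results already established, and the only point deserving a moment's care is the verification that the bundle $\sU$ in the statement is truly the flat bundle attached to $\mD^{n-1}$ and not some related object. As noted, this is settled by Theorem~\ref{secfujita}.
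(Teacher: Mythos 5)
Your proof is correct and follows exactly the paper's own argument: cite Theorem~\ref{secfujita} to identify $\sU$ with $\mD^{n-1}\otimes\sO_B$, invoke Corollary~\ref{fin2} for finiteness of the monodromy of $\mD^{n-1}$, and conclude via the equivalence in \cite[Theorem 2.5]{CD1}. No differences worth noting.
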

\begin{proof}
We have that $\mD^{n-1}$ has finite monodromy and recall that it is the local system associated to the second Fujita decomposition by Theorem \ref{fujitaii}.
\end{proof}

\section{The adjoint local system}
\label{sectionclosed}

We introduce now another local system which in some sense embodies the notion of Massey product of sections of $\mD$.

To do this first note that the main ingredient for the construction introduced in Section \ref{sezioneaggiunta} is the following.
We take $\mD$ and choose a lifting via the splitting of
\begin{equation}
\xymatrix{
0\ar[r]& \omega_B\ar[r]&f_*\Omega^1_{X,d}\ar[r]& \mD\ar[r]\ar @/_1.3pc/ [l]_{} & 0
}
\end{equation}
which is guaranteed by Lemma (\ref{split}) and Lemma (\ref{lemmainclusione}).
After choosing such a lifting $\widehat{\mD}\hookrightarrow f_*\Omega^1_{X,d}$, to construct Massey products we consider the image of
\begin{equation}
\bigwedge^n\mD\otimes T_B\to \bigwedge^n\widehat{\mD}\otimes T_B\to f_*\omega_X\otimes T_B\cong f_*\omega_{X/B}.
\end{equation}

Note that the lifting of $\mD$ is not unique in general, and we consider all the other liftings differing from $\widehat{\mD}$ by a global section of $H^0(B,\omega_B)$. Call $\widehat{\mD}_i$, $i\in I$ a certain set of indices, all these possible liftings. Since obviously
$$
\mD\cong \widehat{\mD}_i
$$ all the $\widehat{\mD}_i$ are local systems inside the sheaf $f_*\Omega^1_{X,d}$. It makes sense to consider the local system generated by the $\widehat{\mD}_i$ which we will denote by $\langle \widehat{\mD}_i\rangle$. We then get a short exact sequence of local systems
\begin{equation}
0\to \mL\to \langle \widehat{\mD}_i\rangle\to \mD\to 0
\label{seqlocsys}
\end{equation} of all the possible liftings of $\mD$.
$\mL$ is the local system generated by the global sections of $\omega_B$.
Denote by $\mH_i$ the image of $\bigwedge^n \widehat{\mD}_i$ via the map 
$$
\bigwedge^n f_*\Omega^1_{X,d}\to f_*\omega_X.
$$
Now take $n$ sections $\eta_1,\dots,\eta_n$ in $\Gamma(A,\mD)$ and two different sets of liftings $s_1,\dots,s_n$ in $\Gamma(A,\widehat{\mD}_j)$ and $s'_1,\dots,s'_n$ in $\Gamma(A,\widehat{\mD}_k)$, with $j,k\in I$. If we want to construct the respective Massey products we have to take the wedge $s_1\wedge\dots\wedge s_n$ in $\Gamma(A,\mH_j)$ and $s'_1\wedge\dots\wedge s'_n$ in $\Gamma(A,\mH_k)$ and they differ by an element of $ \bigwedge^{n-1}\mD\otimes\mL$. More precisely denote by $\langle \mH_i\rangle$ the local system generated by the $\mH_i$'s and by $\widetilde{\mD}$ the image of 
$$
\bigwedge^{n-1}\mD\to \mD^{n-1}\hookrightarrow f_*\omega_{X/B}
$$ then we have an exact sequence 
\begin{equation}
\label{esattalocal}
0\to \widetilde{\mD}\otimes\mL\to \langle \mH_i\rangle\to \mH\to 0.
\end{equation}

$\mH$ can be seen as the local system on $B$ which contains all the Massey products obtained by sections of $\mD$ but without the ambiguity given by the choice of the liftings. 

We want now to give the analogue of Definition \ref{mtrivial} under this new light. Therefore take a point $b\in B$ and a vector subspace $W$ of $\mD_b$ generated by sections $\eta_1,\dots,\eta_n$. $W$ is not necessarily invariant under the monodromy action, but we can consider the local system $\mW$ generated by $W$, as we have done in the previous sections.
Since $\mW$ is a sublocal system of $\mD$ it makes sense to retrace the above construction for $\mW$ instead of the whole $\mD$. We end up with a local system $\mH_W$ inside $\mH$.

\begin{defn}
\label{agglocal}
We call $\mH_W$ the adjoint local system of $W$.
\end{defn}

Even if $\mH_W$ contains the Massey products of sections of W, this approach is different from the one given in Section \ref{sezioneaggiunta} and in particular in Definition \ref{mtrivial}.
In Definition \ref{mtrivial} we consider the Massey product up to the $\sO_B$-submodule generated by the sections $\eta_1\wedge\dots\wedge\widehat{\eta_i}\wedge\dots\wedge\eta_n$ while here we consider the Massey products in $\mH_W$ up to $\widetilde{\mD}\otimes\mL$ which is a local system and not a $\sO_B$-submodule.
This entails that we no longer have an analogue of Proposition \ref{chiusezero} in this setting. That is, even if $\mH_W$ is trivial we cannot say that there is a vector space lifting $W$ and such that the wedge of all its sections is zero. In fact in the proof of this result it is essential to change the liftings of $\eta_i$ by an arbitrary holomorphic form in $\omega_B$. Note that here this is not possible any more because we can change liftings only up to elements of $\mL$. 

On the other hand at the level of global forms the situation is very similar:
\begin{prop} 
If $W\subset \Gamma(B,\mD)$ is an $n$-dimensional subspace of global sections of $\mD$, then $W$ is Massey trivial according to Definition \ref{mastriv} if and only if $\mH_W=0$.
\end{prop}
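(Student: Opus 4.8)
The plan is to unwind the definition of $\mH_W$ into a concrete membership condition and then recognize that condition as the \emph{global-coefficient} reformulation of Massey triviality. Since $W\subset\Gamma(B,\mD)$ consists of global, hence monodromy-invariant, sections, the local system $\mW$ it generates is the constant system with fibre $W$, and every object entering the construction of $\mH_W$ is global: fixing a basis $\eta_1,\dots,\eta_n$ of $W$ and lifting it through the splitting of Lemma \ref{split} (composed with Lemma \ref{lemmainclusione}) produces global closed $1$-forms $s_1,\dots,s_n\in\Gamma(B,f_*\Omega^1_{X,d})=H^0(X,\Omega^1_{X,d})$. The wedge $s_1\wedge\dots\wedge s_n$ is then a global section of $f_*\omega_X$, and by construction $\mH_W$ is generated by its class modulo $\widetilde{\mW}\otimes\mL$, where $\widetilde{\mW}$ is the image of $\bigwedge^{n-1}\mW$ and $\mL$ is the system of global sections of $\omega_B$; changing the lifting moves $s_1\wedge\dots\wedge s_n$ exactly within $\widetilde{\mW}\otimes\mL$, as recorded by the analogue of Sequence (\ref{esattalocal}). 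Thus $\mH_W=0$ if and only if
$$
s_1\wedge\dots\wedge s_n=\sum_{i=1}^n\omega_i\wedge f^*\sigma_i,\qquad \sigma_i\in H^0(B,\omega_B),
$$
that is, if and only if the single Massey product is expressible with \emph{global} coefficients on the $\omega_i$.

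The reverse implication is then immediate. If $\mH_W=0$, the $1$-forms $\sigma_i$ above are in particular local sections of $\omega_B$, so pairing the displayed identity with $\partial/\partial t$ shows that the Massey product $\psi(s_1\wedge\dots\wedge s_n\otimes\partial/\partial t)$ equals $\sum_i\big(\sigma_i(\partial/\partial t)\big)\,\omega_i$, which lies in the $\sO_B$-submodule $\sW$ of Definition \ref{omegai} generated by the $\omega_i$. Hence the $n$-tuple $\eta_1,\dots,\eta_n$ is Massey trivial in the sense of Definition \ref{mtrivial}, and since $\dim W=n$ this means $W$ is Massey trivial in the sense of Definition \ref{mastriv}.

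For the forward implication I would start from Massey triviality: on each member $A_\lambda$ of a trivializing cover the Massey product lies in $\sW$, so $s_1\wedge\dots\wedge s_n=\sum_i\omega_i\wedge f^*\alpha_i^\lambda$ with $\alpha_i^\lambda$ only \emph{local} sections of $\omega_B$. The whole content is to upgrade these local coefficients to global ones, and this is precisely Proposition \ref{locglob}: because the $s_i$ are global closed $1$-forms on $X$, comparing the two expressions on overlaps forces the coefficient forms to obey the transformation law of $1$-forms on $B$, so they glue to elements of $H^0(B,\omega_B)$ and place $s_1\wedge\dots\wedge s_n$ in the image of $H^0(B,\omega_B)\otimes\bigwedge^{n-1}\widetilde{W}\to H^0(B,f_*\omega_X)$, i.e. in $\widetilde{\mW}\otimes\mL$. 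By the reformulation of the first paragraph this is exactly $\mH_W=0$.

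The main obstacle is this local-to-global passage. The equivalence hinges on the fact that, over a \emph{compact} curve $B$, local Massey triviality of \emph{global} sections already forces the coefficient $1$-forms to be global; the mechanism is the gluing argument of Proposition \ref{locglob}, which uses the linear independence of the $\omega_i$ to pin down the coefficients uniquely on overlaps. Equivalently, writing the $\sO_B$-linear coefficient map $\omega_B^{\oplus n}\to f_*\omega_X$, $(\alpha_i)\mapsto\sum_i\omega_i\wedge f^*\alpha_i$, the strictness of $W$ makes this map injective, so the global section $s_1\wedge\dots\wedge s_n$ of its image lifts to a global section of $\omega_B^{\oplus n}$ with no cohomological obstruction in $H^1(B,\ker)$. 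This is the one point where the global geometry of $B$, rather than a fibrewise computation, does the essential work; the rest is bookkeeping identifying the quotient $\mH_W$ with the coset of the Massey product.
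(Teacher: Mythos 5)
Your proof is correct and follows essentially the same route as the paper: the forward implication is exactly an application of Proposition \ref{locglob} (gluing the local coefficient $1$-forms into global sections of $\omega_B$, which relies on the strictness built into that proposition's hypotheses), and the backward implication is the immediate observation that a global identity $s_1\wedge\dots\wedge s_n=\sum_i\omega_i\wedge f^*\sigma_i$ places the Massey product in the $\sO_B$-submodule $\sW$. The paper's own proof is a terser version of precisely this argument, with your first paragraph (identifying $\mH_W=0$ with membership of the wedge in the image of $\widetilde{\mD}\otimes\mL$) left implicit there.
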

\begin{proof}
If $W$ is Massey trivial according to Definition \ref{mastriv}, by Proposition \ref{locglob} we have that every section $s_1\wedge\dots\wedge s_n$ is in the image of $\widetilde{\mD}\otimes \mL$, hence $\mH_W=0$.
The other implication is even more immediate.
\end{proof}
Note that in the previous sections, for example when using the Castelnuovo-de Franchis theorem, we have always worked with global sections of $\mD$, maybe after passing to a covering of $B$. This means that for the purpose of this paper both the constructions have the same effectiveness.

The vanishing of $\mH_W$ also controls the following property of Massey products. As always take $n$ sections $\eta_1,\dots,\eta_n$ in $W$ and liftings $s_1,\dots,s_n$.
The following diagram tensored by $\omega_B$
\begin{equation}
\xymatrix{
&&f_*\omega_X\otimes T_B\\
f_*\Omega^{n-1}_{X,d}\ar@{->>}[r]\ar[urr]&\mD^{n-1}\ar@{^{(}->}[r]&f_*\omega_{X/B}\ar@{=}[u]
}
\end{equation}
 tells us that the Massey product $\psi(s_1\wedge\dots\wedge s_n\otimes \partial/\partial t)$, which is a section of $f_*\omega_{X/B}$, is  actually in $\mD^{n-1}\hookrightarrow f_*\omega_{X/B}$ if and only if the wedge $s_1\wedge\dots\wedge s_n$ is in the image of
\begin{equation}
\label{aggiuntachiusa}
f_*\Omega^{n-1}_{X,d}\otimes \omega_B\to \omega_X.
\end{equation}
The local system $\widetilde{\mD}\otimes\mL$ has this property, hence we can formulate the following
\begin{prop}
\label{chiusasoll}
Take $W$ as above. If $\mH_W$ is zero, all the Massey products constructed starting by sections of $W$ are in $\mD^{n-1}$, that is, they can be lifted to closed differential forms of the ambient variety $X$.
\end{prop}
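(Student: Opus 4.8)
The plan is to obtain the conclusion directly from the exact sequence (\ref{esattalocal}) together with the two observations recorded immediately before the statement, so the argument is essentially one of unwinding definitions. First I would translate the hypothesis $\mH_W=0$. By construction (Definition \ref{agglocal}), $\mH_W$ is the image in $\mH=\langle\mH_i\rangle/(\widetilde{\mD}\otimes\mL)$ of the Massey products built from sections of the local system $\mW$ generated by $W$. Hence $\mH_W=0$ says precisely that, for every $n$-tuple $\eta_1,\dots,\eta_n$ of sections of $W$ and every choice of closed liftings $s_1,\dots,s_n\in\Gamma(A,\widehat{\mD}_i)$, the wedge $s_1\wedge\dots\wedge s_n$, viewed as a section of $\langle\mH_i\rangle\subset f_*\omega_X$, lies in the subsystem $\widetilde{\mD}\otimes\mL$, that is in the kernel of $\langle\mH_i\rangle\to\mH$ appearing in (\ref{esattalocal}).

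Next I would invoke the fact, noted right above the statement, that $\widetilde{\mD}\otimes\mL$ is contained in the image of the map (\ref{aggiuntachiusa}), $f_*\Omega^{n-1}_{X,d}\otimes\omega_B\to\omega_X$. The reason is that an element of $\widetilde{\mD}\otimes\mL$ is the product of a section of $\widetilde{\mD}\subset\mD^{n-1}$, which lifts to a $d$-closed holomorphic $(n-1)$-form on $X$, with a section of $\mL$, which is a holomorphic differential pulled back from $B$; such a product visibly factors through (\ref{aggiuntachiusa}). Combining this with the previous step, whenever $\mH_W=0$ the wedge $s_1\wedge\dots\wedge s_n$ lies in the image of (\ref{aggiuntachiusa}), and then the commutative diagram preceding the statement forces the associated Massey product $\psi(s_1\wedge\dots\wedge s_n\otimes\partial/\partial t)$ to lie in $\mD^{n-1}\hookrightarrow f_*\omega_{X/B}$.

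Finally I would conclude by appealing to the description of $\mD^{n-1}$ established in Lemma \ref{lem1} and the remark following it: the sections of $\mD^{n-1}$ are exactly those relative $(n-1)$-forms arising as restrictions to the fibers of \emph{closed} holomorphic forms on $X$. Thus membership in $\mD^{n-1}$ is the same as liftability to closed differential forms on the ambient variety, which is the asserted conclusion. The only point demanding care — and the nearest thing to an obstacle — is the bookkeeping: one must be sure that the vanishing of $\mH_W$ controls the wedge for the precise liftings used to form the Massey product, not merely up to the lifting ambiguity. This, however, is exactly what is built into the definition of $\mH$ through (\ref{esattalocal}), where the ambiguity $\widetilde{\mD}\otimes\mL$ has already been quotiented out, so no further argument is required.
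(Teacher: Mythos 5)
Your proof is correct and takes essentially the same route as the paper: the paper gives no separate proof block for this proposition, since the discussion immediately preceding it (the commutative diagram, the equivalence ``Massey product in $\mD^{n-1}$ iff the wedge lies in the image of (\ref{aggiuntachiusa})'', and the assertion that $\widetilde{\mD}\otimes\mL$ lands in that image) is the intended argument, and your write-up unwinds exactly these steps via the exact sequence (\ref{esattalocal}). Your only addition is to spell out why $\widetilde{\mD}\otimes\mL$ maps into the image of (\ref{aggiuntachiusa}) — a point the paper asserts without justification — which is a harmless (indeed welcome) elaboration of the same proof.
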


\section{Massey products and first Fujita decomposition}
\label{sez6}

In the previous sections we have shown the relation between Massey products and the second Fujita decomposition, that is we have considered Massey products starting from sections of the local system $\mD$ and their relation with the local system $\mD^{n-1}$, which we recall gives $\sU$ when tensored by $\sO_B$, see Theorem \ref{secfujita}.  

We want now to briefly look at the situation considering the first Fujita decomposition. The difference with the previous case is that we will no longer deal with local systems and unitary flat vector bundle associated, but with trivial vector bundles.

Recall the first Fujita decomposition
\begin{equation}
f_*\omega_{X/B}\cong \sO_B^h\oplus \sE
\end{equation} with $\sE$ a locally free nef sheaf on $B$ with $h^1(B,\sE\otimes \omega_B)=0$.
%

Take the exact sequence 
$$
0\to \omega_B\to f_*\Omega^1_X\to f_*\Omega^1_{X/B}\to \dots
$$
and call $V$ the vector space 
\begin{equation}
V:=H^0(X,\Omega^1_X)/f^*H^0(B,\omega_B).
\end{equation} $V$ injects in $H^0(F_b,\Omega^1_{F_b})$ for all $b\in B^0$ and furthermore we have the following inclusions 
$$
V\subset \mD_b\subset K_\partial\otimes \mC(b)=\ker\delta_{\xi_b}\subset H^0(F_b,\Omega^1_{F_b}).
$$
The inclusion $\mD_b\subset K_\partial\otimes \mC(b)$ is the content of Lemma \ref{lemmainclusione}.
To justify the inclusion $V\subset \mD_b$, first recall that by definition $\mD_b$ is the subspace of sections of $H^0(F_b,\Omega^1_{F_b})$ which can be lifted to closed holomorphic forms on the variety $X$. Now since $X$ is compact, every global holomorphic form on $X$ is automatically closed and it immediately follows that $V$ is contained in $\mD_b$. 

Defining $V_B=V\otimes_\mC \sO_B$ we have the inclusion of vector bundles on $V_B\hookrightarrow \mD\otimes_\mC\sO_B\hookrightarrow K_\partial$ and therefore we can construct Massey products starting from sections of $V_B$.

Now call $H=H^0(X,\Omega^{n-1}_X)$ and $N=\{\phi\in H\mid \phi_{|F_b}=0, \forall b\in B^0\}$. By the exact sequence 
\begin{equation}
0\to \omega_B\otimes f_*\Omega^{n-2}_{X/B}\to f_*\Omega^{n-1}_X\to f_*\Omega^{n-1}_{X/B}\to \dots, 
\end{equation}
we have that $N=H^0(B,\omega_B\otimes f_*\Omega^{n-2}_{X/B})$.
Following the proof of \cite[Theorem 3.1]{Fu} we have that $H/N$ is an $h$-dimensional vector space and
$H/N\otimes_\mC\sO_B$ is the trivial vector bundle appearing in the first Fujita decomposition.
Alternatively this can be seen as follows. The wedge product gives a map
$$
H\otimes \omega_B\to f_*\omega_X
$$ and since $N\otimes \omega_B$ is in the kernel of such a map, we have 
$$
H/N\otimes \omega_B\to f_*\omega_X.
$$ This map is injective over $B^0$, hence it is injective everywhere. 
Now taking the tensor product by $\omega_B^\vee$ we get the desired inclusion
$$
H/N\otimes\sO_B\hookrightarrow f_*\omega_X\otimes \omega_B^\vee=f_*\omega_{X/B}
$$ giving the first Fujita decomposition.

We point out that, like for $V_B\hookrightarrow \mD\otimes\sO_B$, we also have $H/N\otimes_\mC\sO_B\hookrightarrow \mD^{n-1}\otimes_\mC\sO_B=\sU$.

The above discussion shows that we can construct the Massey product of sections of $V_B$ following the same recipe of Section \ref{sezioneaggiunta}.

Regarding Section \ref{sectionclosed}, we have that Sequence (\ref{seqlocsys}) for $V$ is exactly the sequence defining $V$
$$
0\to H^0(B,\omega_B)\to H^0(X,\Omega^1_X)\to V\to 0. 
$$
Furthermore denote by $U_n$ the image of $\bigwedge^n H^0(X,\Omega^1_X)$ in $H^0(X,\Omega^n_X)$ and by $U_{n-1}$ the image of $\bigwedge^{n-1} H^0(X,\Omega^1_X)$ in $H^0(X,\Omega^{n-1}_X)$. Then the local system $\mH$ previously introduced turns out to be the constant sheaf of stalk $U_n/(U_{n-1}\otimes H^0(B,\omega_B))$.

Note also that if we take a vector space of sections $W$ as in Section \ref{sectionclosed}, but with the extra assumption $W\subset V$, then $\mW=W$ since $V$ comes from global sections and hence it is invariant under monodromy.

It is not difficult to see that Proposition \ref{chiusasoll} now is 
\begin{prop}
Take $W\subset V$ as above. If $\mH_W$ is zero, all the Massey products constructed starting by sections of $W$ are in $H/N$, that is, they can be lifted to closed global differential forms of the ambient variety $X$. Furthermore if $\dim W=n$ then $\mH_W=0$ implies that the Massey product of sections of $W$ is trivial.
\end{prop}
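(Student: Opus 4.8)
The plan is to transport the argument of Proposition \ref{chiusasoll} from the local-system setting of the second Fujita decomposition to the constant, global setting of the first one, where $\mD$ is replaced by $V$ and $\mD^{n-1}$ by $H/N$; the only genuinely new ingredient is the triviality addendum for $\dim W=n$, which I expect to extract from a careful reading of the module by which $\mH_W$ is defined.

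First I would make explicit what $\mH_W=0$ means here. Because $W\subset V$, each of its sections lifts to a global $1$-form on $X$, automatically $d$-closed since $X$ is compact; write $\widetilde{W}\subset H^0(X,\Omega^1_X)$ for such a lift, and recall that $\mW=W$ as $V$ is monodromy invariant. Retracing the construction of Section \ref{sectionclosed} with $\mW=W$, the sequence (\ref{esattalocal}) specialises to $0\to U_{n-1}^W\otimes H^0(B,\omega_B)\to\langle\mH_i^W\rangle\to\mH_W\to 0$, where $U_{n-1}^W$ is the image of $\bigwedge^{n-1}\widetilde{W}$ in $H^0(X,\Omega^{n-1}_X)$ and $\langle\mH_i^W\rangle=U_n^W+U_{n-1}^W\otimes H^0(B,\omega_B)$, the terms carrying two or more pullbacks from $B$ dying because $\bigwedge^2\omega_B=0$ on a curve. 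Thus $\mH_W=0$ says precisely that, for every $n$-tuple $\eta_1,\dots,\eta_n$ in $W$,
$$
\widetilde{\eta}_1\wedge\dots\wedge\widetilde{\eta}_n=\sum_j\phi_j\wedge f^*\beta_j,\qquad \phi_j\in U_{n-1}^W,\ \beta_j\in H^0(B,\omega_B),
$$
as sections of $f_*\omega_X$.

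For the first assertion I would invoke the mechanism used just before Proposition \ref{chiusasoll}, now with $H/N\otimes\sO_B$ in place of $\mD^{n-1}$: the Massey product $\psi(\widetilde{\eta}_1\wedge\dots\wedge\widetilde{\eta}_n\otimes\partial/\partial t)$ lands in $H/N\otimes\sO_B$ as soon as the wedge $\widetilde{\eta}_1\wedge\dots\wedge\widetilde{\eta}_n$ lies in the image of $H\otimes\omega_B\to f_*\omega_X$ defining the first Fujita decomposition. The displayed identity exhibits the wedge in the image of $U_{n-1}^W\otimes H^0(B,\omega_B)\subset H\otimes\omega_B$, which settles it; concretely, writing $\beta_j=b_j\,dt$ on a chart and restricting to the fibre, the Massey product equals $\sum_j b_j\,\phi_j|_F$, a section of $H/N\otimes\sO_B$, and each $\phi_j$, being a wedge of global $1$-forms, is a global closed $(n-1)$-form.

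For the second assertion I would use that when $\dim W=n$ the space $\bigwedge^{n-1}\widetilde{W}$ is spanned by the $\widetilde{\eta}_1\wedge\dots\wedge\widehat{\widetilde{\eta}_i}\wedge\dots\wedge\widetilde{\eta}_n$, so that $U_{n-1}^W$ is generated by exactly the forms whose images are the $\omega_i$ of Definition \ref{omegai}, i.e. by the generators of $\sW$. Substituting this into the displayed identity and restricting to the fibre turns the Massey product into a combination $\sum_i c_i\omega_i$, so it lies in $\sW$ and the $n$-tuple is Massey trivial in the sense of Definition \ref{mtrivial}. The step I expect to demand the most care is exactly the identification in the second paragraph: one must take the quotient defining $\mH_W$ with respect to the $W$-specific module $U_{n-1}^W\otimes H^0(B,\omega_B)$ (the image of $\widetilde{\mW}\otimes\mL$) and not the full $U_{n-1}\otimes H^0(B,\omega_B)$, and it is precisely this $W$-specificity that upgrades the conclusion from mere membership in $H/N$ to genuine Massey triviality once $\dim W=n$, while no such upgrade is available for larger $W$.
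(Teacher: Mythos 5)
Your proof is correct and follows exactly the route the paper intends: the paper states this proposition without a separate proof, as the direct transport of Proposition \ref{chiusasoll} to the global setting of Section \ref{sez6}, and your argument fills in precisely those details (the identification of $\mH_W=0$ with the relation $\widetilde{\eta}_1\wedge\dots\wedge\widetilde{\eta}_n\in U_{n-1}^W\otimes H^0(B,\omega_B)$, the resulting membership of the Massey product in $H/N\otimes\sO_B$, and the observation that for $\dim W=n$ the space $U_{n-1}^W$ restricts on fibers to the generators $\omega_i$ of $\sW$, giving Massey triviality in the sense of Definition \ref{mtrivial}). Your closing remark on using the $W$-specific module $U_{n-1}^W\otimes H^0(B,\omega_B)$ rather than the full $U_{n-1}\otimes H^0(B,\omega_B)$ is exactly the point that makes the second assertion work.
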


\section{Sheaves generated by global sections and fibrations over $\mP^1$}
\label{sez7}
Consider the first Fujita decomposition of $f_*\omega_{X/B}$
\begin{equation}
f_*\omega_{X/B}\cong \sO_B^h\oplus \sE
\end{equation} with $\sE$ a locally free nef sheaf on $B$ with $h^1(B,\sE\otimes \omega_B)=0$.
In this section we generalize a result of Konno which gives an interesting bound for $h$, see \cite[Section 1]{K}.

We start by noticing that $f_*\omega_X$ is a locally free sheaf on $B$ of rank $p_g(F)$ with $F$ a general fiber of $f\colon X\to B$. Furthermore $f_*\omega_X\otimes \omega_B^\vee$ is nef by the work of Fujita \cite{Fu}.

Following Konno, we denote by $\sG$ the locally free subsheaf of $f_*\omega_X$ generated by the global sections and by $\sG'$ the locally free quotient $f_*\omega_X/\sG$. On the other hand $\sG^*$ will denote the sheaf $\sG^\vee\otimes \omega_B$.

Now let $\sF'$ the locally free sheaf generated by the global sections of $\sG^*$ and by $\sF=\sF'^*=\sF'^\vee\otimes\omega_B$. $\sF$ and $\sF^*$ are both generated by global sections and nef.

By definition we have $h^0(\sG)=h^0(f_*\omega_X)=p_g(X)$ and $h^0(\sG^*)=h^1(\sG)=h^0(\sF^*)=h^1(\sF)$.
\begin{prop}
If $b$ is the genus of $B$ and $r=\rank\sG$, we have the inequality
\begin{equation}
h \leq\rank \sF-(b-1)(p_g(F)-r)
\end{equation}
If the equality holds, then $\deg \sG'=2(b-1)(p_g(F)-r)$ and $\sF$ is a direct sum of $\rank \sF$ copies of $\omega_B$.
\end{prop}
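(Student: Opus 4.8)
The plan is to pin down the integer $h$ cohomologically and then reduce the whole statement to a single numerical estimate that combines a bound for globally generated bundles on a curve with the nefness of $f_*\omega_{X/B}$.

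First I would identify $h$ intrinsically. Writing $f_*\omega_X=f_*\omega_{X/B}\otimes\omega_B=\omega_B^{\oplus h}\oplus(\sE\otimes\omega_B)$ and using the defining vanishing $h^1(B,\sE\otimes\omega_B)=0$ of the first Fujita decomposition together with $h^1(B,\omega_B)=1$, one gets the clean identity $h=h^1(B,f_*\omega_X)$. Next I would feed the exact sequence $0\to\sG\to f_*\omega_X\to\sG'\to 0$ into cohomology. Since $\sG$ is generated by the global sections of $f_*\omega_X$, we have $H^0(\sG)=H^0(f_*\omega_X)$, so the long exact sequence collapses to $0\to H^0(\sG')\to H^1(\sG)\to H^1(f_*\omega_X)\to H^1(\sG')\to 0$. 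Combining this with Riemann--Roch for $\sG'$ on $B$ yields
\[ h=h^1(\sG)-\deg\sG'+(p_g(F)-r)(b-1), \]
while Serre duality and the definition of $\sF'$ give $h^1(\sG)=h^0(\sG^*)=h^0(\sF')$. Substituting, the asserted inequality becomes equivalent to the single estimate
\[ h^0(\sF')\le \rank\sF-\deg(\sG')^{*},\qquad (\sG')^{*}:=(\sG')^{\vee}\otimes\omega_B. \]

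The heart of the proof is this estimate, which I would derive from two inputs. The first is the standard bound $h^0(\sF')\le \rank\sF+\deg\sF'$, valid for any globally generated bundle on a curve (prove it by peeling off nowhere-vanishing general sections and ending with a globally generated line bundle, for which $h^0\le\deg+1$). The second, which I expect to be the main obstacle, is the degree inequality $\deg\sF'\le-\deg(\sG')^{*}$. I would obtain it by pulling the subsheaf $\sF'\subset\sG^*$ back to a subsheaf $\sP\subset(f_*\omega_{X/B})^{\vee}$ along the surjection $(f_*\omega_{X/B})^{\vee}\twoheadrightarrow\sG^{*}$ that is dual to $\sG\hookrightarrow f_*\omega_X$ (twisted by $\omega_B$) and whose kernel is $(\sG')^{*}$; then $\deg\sP=\deg\sF'+\deg(\sG')^{*}$. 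Because $f_*\omega_{X/B}$ is nef by Fujita's theorem, its dual has $\mu_{\max}\le 0$, so every subsheaf of $(f_*\omega_{X/B})^{\vee}$ has nonpositive degree; in particular $\deg\sP\le 0$, which is exactly $\deg\sF'\le-\deg(\sG')^{*}$. Feeding the two inputs into the displayed estimate and unwinding $\deg(\sG')^{*}=(p_g(F)-r)(2b-2)-\deg\sG'$ gives $h\le\rank\sF-(b-1)(p_g(F)-r)$.

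Finally I would treat the equality case by tracing when the two inputs are sharp, since overall equality forces both the globally generated bound and $\deg\sP\le 0$ to be equalities. Sharpness of the globally generated bound forces $\deg\sF'=0$: the inductive argument bottoms out at a globally generated line bundle with $h^0=\deg+1$, which by Clifford's inequality must have degree $0$, and the degree is unchanged along the peeling; a globally generated bundle of degree $0$ on a curve is trivial, so $\sF'\cong\sO_B^{\rank\sF}$ and hence $\sF=(\sF')^{\vee}\otimes\omega_B\cong\omega_B^{\oplus\rank\sF}$. Sharpness of $\deg\sP=0$ together with $\deg\sF'=0$ then forces $\deg(\sG')^{*}=0$, that is $\deg\sG'=2(b-1)(p_g(F)-r)$, which completes the equality statement.
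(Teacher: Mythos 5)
Your proposal is correct, and at the decisive step it takes a genuinely different route from the paper. The paper performs the same cohomological bookkeeping that you do ($h=h^1(f_*\omega_X)$, $h^0(\sG)=h^0(f_*\omega_X)$, hence $h=h^1(\sG)-\chi(\sG')$, plus Serre duality identifying $h^1(\sG)=h^0(\sF')$), but it then imports three inequalities of Xiao as a black box: $h^1(\sF)\le b\cdot\rank\sF-\tfrac12\deg\sF$, $\deg\sG'\ge 2(b-1)(p_g(F)-r)$, and $\deg\sF+\deg\sG'\ge 2(b-1)(p_g(F)-r+\rank\sF)$, and combines them. In your normalization, Xiao's third inequality is exactly your estimate $\deg\sF'\le-\deg\bigl((\sG')^\vee\otimes\omega_B\bigr)$, which you prove from scratch via Fujita nefness and the subsheaf $\sP\subset(f_*\omega_{X/B})^\vee$; this is the real content of the proposition and your dual-subsheaf argument is a clean, self-contained proof of it (it uses that $\sG'$ is locally free, which the paper's setup grants by definition). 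Your other input, $h^0(\sF')\le\rank\sF'+\deg\sF'$, is strictly weaker than Xiao's first inequality, which is of Clifford type with $\tfrac12\deg\sF'$; but because you compute $\chi(\sG')$ exactly by Riemann--Roch rather than bounding it, the $\deg\sG'$ terms cancel and you never need Xiao's second inequality at all (your nefness argument reproves it anyway: apply $\deg\sP\le 0$ to $(\sG')^\vee\otimes\omega_B$ itself, i.e.\ to the preimage of $0$). So the paper buys brevity by citation, while your version buys self-containedness and shows that the full strength of Xiao's inequalities is not needed. The equality analyses also diverge in a parallel way: the paper reads off $\deg\sG'=2(b-1)(p_g(F)-r)$ from equality in Xiao's second inequality and gets $\sF\cong\omega_B^{\oplus\rank\sF}$ from nefness plus global generation, whereas you get the degree statement from sharpness of the nefness estimate and the triviality of $\sF'$ from the peeling argument.

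One small imprecision to patch in your equality case: Clifford's inequality applies to \emph{special} line bundles, so the claim that a globally generated line bundle with $h^0=\deg+1$ has degree $0$ needs a case split. If the line bundle at the bottom of the peeling is special, Clifford forces degree $0$ as you say; if it is nonspecial, Riemann--Roch forces $b=0$, and on $\mP^1$ the claim is genuinely false (take $\sO_{\mP^1}(d)$, $d>0$). This does not break the proof: when $b=0$, dualizing a surjection $\sO_B^{\oplus N}\twoheadrightarrow\sG$ and twisting gives $\sG^*\hookrightarrow\omega_B^{\oplus N}$, so $h^0(\sG^*)=0$, hence $\sF'=0$ and the equality analysis is vacuous; and for $b\ge1$ the nonspecial case cannot occur. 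Adding that sentence makes your argument airtight.
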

\begin{proof}
By the work of Xiao, \cite[Page 477]{X}, we have the inequalities 
\begin{equation}
h^1(\sG)=h^1(\sF)\leq b\cdot\rank\sF-\frac{1}{2}\deg \sF
\label{xiao1}
\end{equation}
\begin{equation}
\deg \sG'\geq 2(b-1)(p_g(F)-r)
\label{xiao2}
\end{equation}
\begin{equation}
\deg \sF+\deg\sG'\geq 2(b-1)(p_g(F)-r+\rank\sF)
\label{xiao3}
\end{equation}
Now $h=h^1(f_*\omega_X)$ by Fujita and since $H^0(\sG)=h^0(f_*\omega_X)$ from the local sequence 
$$
0\to \sG\to f_*\omega_X\to\sG'\to 0 
$$ we obtain that $h=h^1(\sG)-\chi(\sG')$. By Riemann-Roch and by (\ref{xiao2}) we have $$\chi(\sG')=\deg\sG'-(b-1)(p_g(F)-r)\geq \deg \sG'/2,$$ hence 
$$
h=h^1(\sG)-\chi(\sG')\leq h^1(\sG)-\deg\sG'/2\leq b\cdot \rank\sF-(\deg\sF+\deg\sE')/2
$$ where the last inequality uses (\ref{xiao1}). Applying (\ref{xiao3}) gives the inequality in the statement.

If the equality holds then by reversing the previous computations we have that it must hold also in (\ref{xiao1}), (\ref{xiao2}) and (\ref{xiao3}). Putting those three together we obtain that in this case $h^1(\sF)=\rank\sF$, hence $h^0(\sF^*)=\rank\sF$. By the fact that $\sF^*$ and $\sF$ are both nef and generated by global sections, we have that $\sF^*$ is semi-stable of degree 0 and hence a sum of $\sO_B$. 
Tensoring by $\omega_B$ gives the result for $\sF$.
\end{proof}
\medskip
We can define the rank $r$ of a fibration $f\colon X\to B$ as the rank of the  the locally free subsheaf of $f_*\omega_X$ generated by the global sections.

\begin{thm} Let  $X$ be an irregular variety with $q(X)>n$. If $f\colon X\to\mP^1$ is a fibration with $r=p_g(F)$ then it is not of Albanese general type
\end{thm}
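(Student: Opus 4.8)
The plan is to prove the statement by showing that $X$ fails to have maximal Albanese dimension. Since we are given $q(X)>n$, the defining inequality for Albanese general type is automatic, so $X$ is of Albanese general type precisely when its Albanese dimension equals $n$; hence it suffices to show that the generic rank of the evaluation map $H^0(X,\Omega^1_X)\otimes \sO_X\to \Omega^1_X$ is strictly less than $n$, or equivalently that every decomposable wedge $w_1\wedge\dots\wedge w_n$ of global $1$-forms vanishes identically on $X$. First I would extract the structural consequence of the hypothesis $r=p_g(F)$.

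On $B=\mP^1$ the first and second Fujita decompositions coincide, so $f_*\omega_{X/B}\cong \sO_B^h\oplus \sA$ with $\sA$ ample. By the projection formula $f_*\omega_X\cong f_*\omega_{X/B}\otimes \omega_B\cong \sO_{\mP^1}(-2)^h\oplus(\sA\otimes \omega_B)$, and since $\sO_{\mP^1}(-2)$ has no sections, every global section of $f_*\omega_X$ lies in the subbundle $\sA\otimes\omega_B$, which has rank $p_g(F)-h$. Hence the subsheaf $\sG\subset f_*\omega_X$ generated by global sections satisfies $r=\rank \sG\leq p_g(F)-h$, and the hypothesis $r=p_g(F)$ forces $h=0$. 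Thus $\sU=0$, and by Theorem \ref{secfujita} the local system of the second Fujita decomposition vanishes: $\mD^{n-1}=0$.

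Next I would exploit the wedge map of local systems $\bigwedge^{n-1}\mD\to \mD^{n-1}$ (the wedge of $1$-forms liftable to closed holomorphic forms is again so liftable). Since $\mD^{n-1}=0$, this map is zero, so for any sections $\eta_1,\dots,\eta_{n-1}$ of $\mD$ the wedge $\eta_1\wedge\dots\wedge\eta_{n-1}$ vanishes already in $f_*\Omega^{n-1}_{X/B}$; restricting to a general fibre $F=F_b$ this says that every $(n-1)$-fold wedge of the $1$-forms in $\mD_b\subset H^0(F,\Omega^1_F)$ is zero in $H^0(F,\omega_F)$. As $\dim F=n-1$, it follows that at a general point $x\in F$ the subspace $\mD_b$ evaluates into $\Omega^1_{F,x}$ with rank at most $n-2$. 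Finally, because $X$ is compact every global $1$-form is closed, and because $B=\mP^1$ we have $V=H^0(X,\Omega^1_X)$ with $V\subset\mD_b$, so global $1$-forms restrict to $F$ into $\mD_b$; hence $H^0(X,\Omega^1_X)\to \Omega^1_{F,x}$ has rank $\leq n-2$. Composing with the one-dimensional kernel of the restriction $\Omega^1_{X,x}\twoheadrightarrow \Omega^1_{F,x}$ yields generic rank $\leq (n-2)+1=n-1<n$ for $H^0(X,\Omega^1_X)\to\Omega^1_{X,x}$, so the Albanese dimension of $X$ is at most $n-1$ and $X$ is not of Albanese general type.

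The derivation of $h=0$ and the fibrewise evaluation of the local systems are routine given the earlier sections; the point to handle with care is the passage from $\mD^{n-1}=0$ to the pointwise rank bound on the fibre, together with the bookkeeping that adds back the single base direction coming from $f^*\omega_B$. I expect this comparison of the fibrewise rank with the total evaluation rank on $X$ — making sure that no extra independent $1$-form appears beyond the one contributed by the base direction — to be the only delicate step.
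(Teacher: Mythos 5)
Your proof is correct and takes essentially the same route as the paper: the hypothesis $r=p_g(F)$ forces $\sU=0$, hence $\mD^{n-1}=0$, and then the vanishing of all $(n-1)$-fold wedges of fibre restrictions of global $1$-forms (via $\bigwedge^{n-1}\mD\to\mD^{n-1}$ and $H^0(X,\Omega^1_X)\subset\mD_b$) degenerates the Albanese map. The differences are cosmetic: the paper gets $\sU=0$ from Grothendieck's splitting of $f_*\omega_X$ with all $a_j\geq 0$ and compresses the endgame into the statement that no $n$-dimensional subspace of $H^0(X,\Omega^1_X)$ is strict, whereas you derive $h=0$ by counting sections in the twisted decomposition and spell out the pointwise rank bound $\leq(n-2)+1=n-1$, which is a faithful expansion of the paper's terse final step rather than a different argument.
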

\begin{proof}
By Grothendieck's decomposition theorem and by Fujita decomposition theorem the assumption that $r=p_g(F)$ implies that $f_{*}\omega_X=\oplus_{j=0}^r\sO_{\mP^1}(a_j)$ where $a_j\geq 0$ and not necessarily distinct. Then in the second Fujita decomposition we have $\sU=0$. This implies that $\mathbb D^{n-1}=0$. This means that no $n$-dimensional subspace $W\subset H^0(X,\Omega^1_X)$ is strict.
\end{proof}

\end{document}